\newcommand\e\varepsilon
\newcommand\R{\mathbb R}
\newcommand\Z{\mathbb Z}
\newcommand\de\partial
\newcommand\weakto\rightharpoonup
\DeclareMathOperator\dist{dist}
\DeclareMathOperator\supp{supp}
\renewcommand\le\leqslant
\renewcommand\ge\geqslant
\renewcommand\a\alpha
\renewcommand\b\beta
\renewcommand\d\delta
\newcommand\vfi\varphi
\newcommand\g\gamma
\newcommand\gb\gamma
\renewcommand\l\lambda
\newcommand\n\nabla
\newcommand\s\sigma
\renewcommand\t\theta
\renewcommand\O\S
\newcommand\G\Gamma
\renewcommand\S\Sigma
\renewcommand\L\Lambda
\newcommand\I{\mathcal I}
\newcommand\M{\mathcal M}
\newcommand\N{\mathbb N}
\renewcommand\o\S
\def\bbm[#1]{\text{\boldmath $#1$}}
\newcommand\beq{\begin{equation}}
\newcommand\eeq{\end{equation}}
\renewcommand\leq{\leqslant}
\newtheorem{theorem}{Theorem}[section]
\newtheorem{lemma}[theorem]{Lemma}
\newtheorem{proposition}[theorem]{Proposition}
\newtheorem{remark}[theorem]{Remark}
\newtheorem{corollary}[theorem]{Corollary}
\def\sideremark#1{\ifvmode\leavevmode\fi\vadjust{\vbox to0pt{\vss
\hbox to0pt{\hskip\hsize\hskip1em
\vbox{\hsize3cm\tiny\raggedright\pretolerance10000
\noindent #1\hfill}\hss}\vbox to8pt{\vfil}\vss}}}
\title[A double mean field equation related to a curvature prescription problem]
{A double mean field equation related to a curvature prescription problem}
\author{Luca Battaglia}
\address{Luca Battaglia\\ 
Universit\`a degli Studi Roma Tre\\
Dipartimento di Matematica e Fisica\\
Largo S. Leonardo Murialdo 1\\
00146 Roma, Italy.}
\email{lbattaglia@mat.uniroma3.it}
\author{Rafael L\'opez--Soriano}
\address{Rafael L\'opez--Soriano\\
Universitat de Val\`encia\\
Departamento de An\'alisis Matem\'atico\\
Dr. Moliner 50\\
46100 Burjassot (Val\`encia), Spain.}
\email{rafael.lopez-soriano@uv.es}
\keywords{Prescribed curvature problem, conformal metric, variational methods, blow--up analysis.}
\subjclass[2010]{35J20, 58J32.}
\begin{document}

\begin{abstract}
We study a double mean field--type PDE related to a prescribed curvature problem on compacts surfaces with boundary:

\beq\label{meanmeanfield}
\left\{\begin{array}{ll}
-\Delta u=2\rho\left(\frac{Ke^u}{\int_\S Ke^u}-\frac1{|\S|}\right)&\text{in }\S,\vspace{.3cm}\\
\de_\nu u=2\rho'\left(\frac{he^\frac u2}{\int_{\de\S}he^\frac u2}-\frac1{|\de\S|}\right)&\text{on }\de\S.
\end{array}\right.
\eeq

Here $\rho$ and $\rho'$ are real parameters, $K,h$ are smooth positive functions on $\S$ and $\de\S$ respectively and $\nu$ is the outward unit normal vector to $\partial \Sigma$.

We provide a general blow--up analysis, then a Moser--Trudinger inequality, which gives energy--minimizing solutions for some range of parameters. Finally, we provide existence of min--max solutions for a wider range of parameters, which is dense in the plane if $\S$ is not simply connected.
\end{abstract}

\maketitle

\section{Introduction}
\setcounter{equation}0

Let $\S$ be a compact surface with boundary equipped with a metric $\widetilde g$. We consider the following boundary value problem

\beq\label{gg0}
\left\{\begin{array}{ll}
-\Delta u+2\widetilde K=2Ke^u&\text{in }\S,\vspace{.3cm}\\
\de_\nu u+2\widetilde h=2he^\frac u2&\text{on }\de\S,
\end{array}\right.
\eeq
where $\Delta=\Delta_{\widetilde g}$ is the Laplace--Beltrami operator in $(\S,\widetilde g)$, $\de_\nu$ is the normal derivative with $\nu$ the outward normal vector to $\de\S$ and $K,\widetilde K:\S\to\R$ and $h,\widetilde h:\de\S\to\R$ are smooth.

This kind of equations has a special interest due to its geometric meaning. Indeed, the problem allows us to prescribe at the same time Gaussian curvature in $\S$ and geodesic curvature on $\de\S$. More precisely, given a metric $g=\widetilde ge^u$ conformal to $\widetilde g$, if $K,\widetilde K$ are the Gaussian curvatures and $h,\widetilde h$ the geodesic curvatures of $\de\S$, relative to the metrics $g,\widetilde g$, then $u$ satisfies \eqref{gg0}.

Integrating \eqref{gg0} and applying the Gauss--Bonnet theorem, one obtains

$$\int_\S Ke^u+\int_{\de\S}he^\frac u2=\int_\S\widetilde K+\int_{\de\S}\widetilde h=2\pi\chi(\S),$$
which imposes necessary conditions on the choice of the functions $K,h$.

\

Some versions of the problem has been studied in the literature. Regarding the solvability, in \cite{chyg,ruizsoriano} the case $h=0$ is considered, whereas if $K=0$ there are some results available in \cite{chang2,liliu,liuhuang}.

Concerning the prescription of constant curvatures, it is worth referring to \cite{brendle}, where solutions are obtained by the use of a parabolic flow. By means of complex analysis, explicit solutions were found for the disk and the annulus, \cite{hangwang,asun}. There exist also some classification results when $\S$ is the half--plane in \cite{zhang,mira-galvez}. 

The case of non constant curvature has not been as much studied. For instance, \cite{cherrier} gives partial existence results, which includes an undetermined Lagrange multiplier; in \cite{hamza}, the author derives a Kazdan--Warner condition for the existence of solution.

\

It is easy to see that, using a conformal change of metric, we can always prescribe the constant values $\widetilde h\equiv0$, $\widetilde K\equiv\widetilde K_0:=\frac{2\pi\chi(\S)}{|\S|}$ (see \cite{lsmr}, Proposition 3.1, for a precise deduction). Hence, without loss of generality, we can assume that the initial metric satisfies $\widetilde K=\frac{2\pi\chi(\S)}{|\S|}$ and $\widetilde h=0$. The problem then becomes:

\beq\label{gg}
\left\{\begin{array}{ll}
-\Delta u+\frac{4\pi\chi(\S)}{|\S|}= 2Ke^u&\text{in }\S,\vspace{.3cm}\\
\de_\nu u=2he^\frac u2&\text{on
}\de\S.
\end{array}\right. 
\eeq

One possible strategy to obtain solutions to \eqref{gg} is to exploit the variational structure of the problem. In \cite{lsmr} the energy functional $\I:H^1(\S)\to\R$ is considered:

$$\I(u)=\int_\S\left(\frac12|\n u|^2+\frac{4\pi\chi(\S)}{|\S|}u-2Ke^u\right)-4\int_{\de\S}he^\frac u2.$$

By minimizing the previous Euler--Lagrange energy functional or via min--max methods, the authors obtain several existence results for surfaces with $\chi(\S)\le0$ and a compactness criterion for solutions. Actually, it is shown that the relation between $K$ and $h$ on $\de\S$ dramatically affects the geometry of $\I$.

\

An alternative variational formulation was introduced by Cruz and Ruiz in \cite{sergio}. Defining the parameter $\rho:=\int_\S Ke^u=2\pi\chi(\S)-\int_{\de\S}he^\frac u2$, the problem \eqref{gg} is equivalent to the following mean field equation:

\beq\label{meanfieldgg}
\left\{\begin{array}{ll}
-\Delta u+\frac{4\pi\chi(\S)}{|\S|}=2\rho\frac{Ke^u}{\int_\S Ke^u}&\text{in }\S,\vspace{.3cm}\\
\de_\nu u=2(2\pi\chi(\S)-\rho)\frac{he^\frac u2}{\int_{\de\S}he^\frac u2}&\text{on }\de\S,\vspace{.3cm}\\
\frac{(2\pi\chi(\S)-\rho)^2}{|\rho|}=\frac{\left(\int_{\de\S}he^\frac u2\right)^2}{\left|\int_\S Ke^u\right|}.
\end{array}\right.
\eeq

Solutions of the problem \eqref{meanfieldgg} can be found as critical points of this new energy functional, defined on $H^1(\S)\times\mathbb R$:

\begin{eqnarray*}
\I_0(u,\rho)&=&\int_\S\left(\frac12|\n u|^2+\frac{4\pi\chi(\S)}{|\S|}u\right)-2\rho\log\left|\int_\S Ke^u\right|-4(2\pi\chi(\S)-\rho)\log\left|\int_{\de\S}he^\frac u2\right|\\
&+&4(2\pi\chi(\S)-\rho)\log|2\pi\chi(\S)-\rho|+2\rho+2\rho\log|\rho|.
\end{eqnarray*}

In \cite{sergio}, the authors are concerned with the case in which $\S$ is the unit disk and the functions $K,h$ are nonnegative and verify certain symmetry properties. In this case $0<\rho<2\pi$ and $\I_0(u,\rho)$ is coercive by using a Moser--Trudinger type inequality, hence a solution can be derived by minimizing.

In this context, the mean field formulation seems to be convenient, but it does not seem to be as useful for noncoercive ranges.

\

Mean field type equations have been object of several works, see for instance \cite{dm,bdm,bjmr}, not only motivated by its geometrical meaning but also due to its relevance in some current physical theories. As a limit problem, they appear in the abelian Chern--Simons Theory (see \cite{dun}), or in the study of vortex type configurations in the Electroweak theory of Glashow--Salam--Weinberg, see \cite{lai}. We refer to the reader the monographs \cite{yang,tarbook} for further details and a complete set of references concerning these applications. 

\

In this paper we will focus on a slightly different case, namely

\beq\label{meanfield}
\left\{\begin{array}{ll}
-\Delta u+\frac{2(\rho+\rho')}{|\S|}=2\rho\frac{Ke^u}{\int_\S Ke^u}&\text{in }\S,\vspace{.3cm}\\
\de_\nu u=2\rho'\frac{he^\frac u2}{\int_{\de\S}he^\frac u2}&\text{on }\de\S.
\end{array}\right.
\eeq

Clearly, problem \eqref{meanfield} can be obtained by \eqref{gg} by setting $\rho:=\int_\S Ke^u$ and $\rho':=\int_{\de\S}he^\frac u2$, and includes also the \emph{nongeometrical} case $\rho+\rho'\ne2\pi\chi(\S)$. Anyway, unlike \eqref{meanfieldgg}, not all the solutions to \eqref{meanfield} correspond to \eqref{gg}, because the third relation in \eqref{meanfieldgg} may not be verified.

Moreover, it can be seen that equation \eqref{meanfield} can be transformed into \eqref{meanmeanfield}. In order to do it, it suffices to notice that $u+\vfi$ is solution to \eqref{meanmeanfield}, where $u$ solves \eqref{meanfield} and $\vfi$ satisfies

\beq\label{phi}
\left\{\begin{array}{ll}
\Delta\vfi=\frac{2\rho'}{|\S|}&\text{in }\S,\vspace{.3cm}\\
\de_\nu\vfi=\frac{2\rho'}{|\de\S|}&\text{on }\de\S.
\end{array}\right.
\eeq

Since the difference between both formulations does not play any role, we will consider \eqref{meanfield} and we will not comment on this issue any further.

\

From now on, we will consider potentials that do not change sign. We shall not impose any extra assumption on the sign of the parameters $\rho,\rho'$. For that reason, let us reduce ourselves to the case of positive potentials, namely (due to $\S$ and $\de\S$ being compact)

\beq\label{h}
\frac1C\le K(x)\le C\quad\quad\quad\forall x\in\S,\quad\quad\quad\quad\quad\quad\frac1C\le h(x)\le C\quad\quad\quad\forall x\in\de\S.\tag{H}
\eeq

Therefore, the sign of both right hand sides in \eqref{meanfield} will be determined by the sign of $\rho$ and $\rho'$ respectively.

Problem \eqref{meanfield} again admits a variational formulation, with the energy functional given by

\beq\label{j}
\boxed{\mathcal J(u)=\mathcal J_{\rho,\rho'}(u):=\int_\S\left(\frac12|\n u|^2+\frac{2(\rho+\rho')}{|\S|}u\right)-2\rho\log\int_\S Ke^u-4\rho'\log
\int_{\de\S}he^\frac u2.}
\eeq

\

A well-known tool to deduce crucial properties on the energy functional are the Moser--Trudinger type inequalities. Such inequalities show that the Sobolev emedding in the critical dimension is exponential, proved in the pioneer works of Moser and Trudinger \cite{tru,mos}. As a consequence, for \emph{closed} surfaces it holds:

\beq\label{mtineq}
16\pi\log\int_\S e^u\le\int_\S|\n u|^2+\frac{16\pi}{|\S|}\int_\S u+C,\quad\quad\quad\forall u\in H^1(\S).
\eeq

On the other hand, if $\S$ has a boundary, then the former inequality is no longer true and the constant $16\pi$ must be divided by $2$, as shown by Chang and Yang in \cite{chyg}:

\beq\label{chygineq}
8\pi\log\int_\S e^u\le\int_\S|\n u|^2+\frac{8\pi}{|\S|}\int_\S u+C,\quad\quad\quad\forall u\in H^1(\S).
\eeq

Finally, we recall a Moser--Trudinger type inequality with boundary integrals, rather than interior ones, given by Li and Liu in \cite{liliu}:

\beq\label{liliuineq}
16\pi\log\int_{\de\S}e^\frac u2\le\int_\S|\n u|^2+\frac{8\pi}{|\de\S|}\int_{\de\S}u+C,\quad\quad\quad\forall u\in H^1(\S).
\eeq

Interpolating \eqref{chygineq} and \eqref{liliuineq}, one derives the inequality

\beq\label{mt}
\boxed{4\rho\log\int_\S e^u+8(2\pi-\rho)\log\int_{\de\S}e^\frac u2\le\int_\S\left|\n u\right|^2+\frac{8\pi}{|\S|}\int_\S u+C_\rho,\quad\quad\quad\forall u\in H^1(\S).}
\eeq
for every $\rho\in [0,2\pi]$.

One of the aims of this paper is to extend the previous inequality to the case $\rho\le 4\pi$, including negative $\rho$, whose proof is not as immediate. Our approach is inspired by the ideas of \cite{jostwang,lucamt,lucandrea} to obtain other Moser--Trudinger type inequalities. In fact, such a proof is based on a blow--up analysis for sequences of minimizing solutions to \eqref{meanfield}.

\

By plugging \eqref{mt} in the definition of the energy functional \eqref{j} one easily obtains that $\mathcal J$ is coercive if $\rho<4\pi,\rho+\rho'<2\pi$. As an immediate consequence, in the coercivity case we can state the following existence result.

\begin{theorem}\label{ExiMin}
Assume \eqref{h}, $\rho<4\pi$ and $\rho+\rho'<2\pi$. Then, problem \eqref{meanfield} admits a solution which is a global minimizer of the energy functional $\mathcal J$ defined by \eqref{j}.
\end{theorem}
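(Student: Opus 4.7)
My strategy is the direct method applied to $\mathcal J$. I would first observe that $\mathcal J$ is invariant under adding a constant to $u$: a direct computation shows that the contributions of $\bar u := |\S|^{-1}\int_\S u$ coming from the linear term $\int_\S\frac{2(\rho+\rho')}{|\S|}u$ exactly cancel those coming out of the two logarithmic terms (since $\log\int_\S Ke^u = \bar u + \log\int_\S Ke^{u-\bar u}$ and $\log\int_{\de\S}he^{u/2}=\bar u/2+\log\int_{\de\S}he^{(u-\bar u)/2}$), so $\mathcal J(u+c) = \mathcal J(u)$ for every constant $c$. Hence it suffices to work on the subspace $X := \{u \in H^1(\S) : \int_\S u = 0\}$, on which $\int_\S|\n u|^2$ controls the full $H^1$--norm.

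The heart of the argument is coercivity of $\mathcal J$ on $X$. Thanks to \eqref{h}, $\log\int_\S Ke^u$ and $\log\int_{\de\S}he^{u/2}$ differ from $\log\int_\S e^u$ and $\log\int_{\de\S}e^{u/2}$ only by an additive constant, so it suffices to bound the pure exponentials. In the main case $\rho,\rho'>0$ with $\rho+\rho'<2\pi$, I would apply \eqref{mt} with parameter $t := 2\pi\rho/(\rho+\rho') \in (0,2\pi)$ scaled by $\lambda := (\rho+\rho')/(4\pi) \in (0,1/2)$, chosen so that $\lambda\cdot 4t = 2\rho$ and $\lambda\cdot 8(2\pi-t)=4\rho'$. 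Using $\int_\S u=0$, this yields
$$\mathcal J(u) \ge \frac{1}{2}\left(1-\frac{\rho+\rho'}{2\pi}\right)\int_\S|\n u|^2 - C,$$
which is coercive. For the remaining sign configurations (negative $\rho$ or $\rho'$, or $\rho\in[2\pi,4\pi)$) the same scheme adapts by invoking the extended Moser--Trudinger inequality announced in the introduction (valid for $\rho<4\pi$) and disposing of the logarithms that appear with the ``wrong'' sign via Jensen-type lower bounds such as $\log\int_\S e^u \ge \log|\S|$, which hold on $X$.

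Given coercivity, any minimizing sequence $\{u_n\}\subset X$ is bounded in $H^1(\S)$ and admits a subsequence converging weakly to some $u^*\in X$. Weak lower semicontinuity of $\mathcal J$ combines convexity of the Dirichlet term, linearity of the second term, and strong convergence of $e^{u_n}$ in $L^1(\S)$ and of $e^{u_n/2}$ in $L^1(\de\S)$; the latter follows from the compact embeddings $H^1(\S)\hookrightarrow L^p(\S)$ and $H^1(\S)\hookrightarrow L^p(\de\S)$ for every $p<\infty$ together with the exponential integrability provided by Moser--Trudinger, passing to the limit via Vitali's theorem. Hence $u^*$ achieves the infimum. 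A standard variational calculation identifies critical points of $\mathcal J$ with weak solutions of \eqref{meanfield}, and smoothness and positivity of $K,h$ together with elliptic bootstrap (using the exponential integrability of $u^*$) upgrade $u^*$ to a classical solution.

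The main obstacle is coercivity in full generality. Inequality \eqref{mt} as stated covers only $\rho\in[0,2\pi]$, whereas the theorem allows $\rho$ up to $4\pi$ and both $\rho,\rho'$ of arbitrary sign. One therefore really needs the extended Moser--Trudinger inequality announced in the introduction, coupled with a careful sign-by-sign analysis in which the ``bad'' sign logarithms are controlled via Jensen-type lower bounds on $\int_\S e^u$ and $\int_{\de\S}e^{u/2}$ in place of the usual upper bounds.
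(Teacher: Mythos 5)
The direct--method half of your argument (invariance under addition of constants, bounded minimizing sequence, compact trace embedding plus Vitali to pass to the limit in the exponential terms, Euler--Lagrange identification and bootstrap) is correct and is exactly what the paper leaves implicit when it deduces Theorem \ref{ExiMin} from Theorem \ref{mtteo}. The gap is in the coercivity step, which is the real content here. Your scaling of \eqref{mt} is sound, but \eqref{mt} is only known, at this elementary stage, for the interpolation parameter in $[0,2\pi]$, so your argument covers essentially the region of Lemma \ref{interpol} (both $\rho$ and $\rho'$ at most $2\pi$ and nonnegative where needed). For the rest of the admissible range the ``dispose of the wrong-sign logarithm by Jensen'' scheme demonstrably fails. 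Take $\rho=3\pi$, $\rho'=-2\pi$, so $\rho<4\pi$ and $\rho+\rho'=\pi<2\pi$: bounding $-4\rho'\log\int_{\de\S}he^{u/2}$ from below via Jensen and controlling $-2\rho\log\int_\S Ke^u$ via \eqref{chygineq} leaves the coefficient $\tfrac12-\tfrac{\rho}{4\pi}=-\tfrac14<0$ in front of $\int_\S|\n u|^2$. Symmetrically, $\rho=-2\pi$, $\rho'=3\pi$ defeats \eqref{liliuineq}, since $\rho'>2\pi$. In these regimes coercivity holds only because a boundary bubble inflates $\int_\S e^u$ and $\int_{\de\S}e^{u/2}$ in a quantitatively linked way (this is the content of Proposition \ref{quasisharp} and Proposition \ref{prop0}), and no Jensen-type lower bound detects this compensation.

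Relatedly, the ``extended Moser--Trudinger inequality announced in the introduction'' that you invoke for $\rho<4\pi$ and negative parameters is precisely Theorem \ref{mtteo}; it is not an available black box but the hard part of the proof, and treating it as routine makes your argument circular. The paper establishes it by contradiction: if coercivity failed somewhere in $\{\rho<4\pi,\ \rho+\rho'<2\pi\}$, there would be a boundary point $(\rho_0,\rho_0')$ of the set $\Lambda$ of bounded-below parameters lying inside that open region; the Jost--Wang perturbation $\mathcal J-F$ of Lemma \ref{perturb} produces minimizers for nearby coercive parameters, and the blow-up alternative of Theorem \ref{compa} excludes both options (compactness would yield a minimizer of a functional unbounded from below, while blow-up is impossible since $(\rho_0,\rho_0')$ is not in the critical set \eqref{critic}). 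To complete your proof you must either reproduce this blow-up machinery or give an independent proof of the inequality beyond $\rho=2\pi$ and for negative parameters; a convex interpolation of \eqref{chygineq} and \eqref{liliuineq} cannot reach $\rho>2\pi$, because the coefficient required on \eqref{liliuineq} would have to be negative, which reverses the inequality.
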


\

On the other hand, we also prove that inequality \eqref{mt} is somehow sharp, in the sense that it cannot hold either $\rho>4\pi$ or $2\pi-\rho$ is replaced by any larger number. Therefore, for any other choice of $\rho,\rho'$, critical points will not be global minimizers. However, we shall prove that there exist critical points of another type, such as saddle-type ones, by means of a min-max argument.

The strategy we will follow is based on the topological analysis of energetic sublevels

\beq\label{jl}
\mathcal J^L:=\left\{u\in H^1(\S):\mathcal J(u)\le L\right\}.
\eeq

This argument was first introduced in the pioneer paper \cite{dm} and it is now a rather classical tool in attacking mean field type problems, see \cite{bdm,andav,lucatoda,bjmr,jkm,lucand2,lucab2g2,dmls,dmlsr,lucaks}. The main ingredient is to prove that very low sublevels are non contractible. Actually, if $\mathcal J(u)\ll0$ then the measure $\frac{Ke^u}{\int_\S Ke^u}$ is almost concentrated at just a finite number of points, due to a localized version of the Moser--Trudinger inequality.

Therefore, low sublevels inherit some topology from such a space of finitely supported measures, called \emph{barycenters} which is not contractible under proper assumptions (a more detailed definition and description will be given later). On the other hand, an argument from \cite{luc} yields contractibility of very high sublevels, hence there must be some change of topology between them which implies, via Morse theory, existence of solutions.

This approach requires some compactness property for solutions. As a consequence of the blow--up analysis, one can deduce that a sequence of solutions to \eqref{meanfield} remains uniformly bounded if the parameter $\rho$ is not a multiple of $4\pi\N$ and $\rho+\rho'$ is not a multiple of $2\pi\N$.

Thus we choose $\rho$ between two consecutive multiples of $4\pi$ and $\rho+\rho'$ between two consecutive multiples of $2\pi$. Precisely, setting

\beq\label{nm}
N:=\inf\{n\in\N:\,\rho<4\pi(n+1)\},\quad\quad\quad\quad\quad\quad M:=\inf\{m\in\N:\,\rho+\rho'<2\pi(m+1)\};
\eeq
we will have

$$\left\{\begin{array}{ll}\rho<4\pi&\text{if }N=0\\
4\pi N<\rho<4\pi(N+1)&\text{if }N\in\N\end{array}\right.,\quad\quad\quad\quad\quad\quad\left\{\begin{array}{ll}\rho+\rho'<2\pi&\text{if }M=0\\
2\pi M<\rho<2\pi(M+1)&\text{if }M\in\N\end{array}\right..$$

\begin{theorem}\label{eximinmax}
Assume \eqref{h}, $\rho\notin4\pi\mathbb N$, $\rho+\rho'\notin2\pi\mathbb N$ and let $N,M\in\mathbb N\cup\{0\}$ be as in \eqref{nm}, with $(N,M)\ne(0,0)$.

If $\S$ is not simply connected, then problem \eqref{meanfield} admits solutions for any $N,M$.

If $\S$ is simply connected and $N<M$, then problem \eqref{meanfield} admits solutions.
\end{theorem}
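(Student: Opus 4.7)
The plan is to run a change-of-topology scheme on the energy sublevels $\mathcal J^L$ of \eqref{jl}. The blow-up analysis developed earlier in the paper gives uniform a priori bounds on sequences of solutions to \eqref{meanfield} whenever $\rho \notin 4\pi\N$ and $\rho + \rho' \notin 2\pi\N$, which together with Struwe's monotonicity trick reduces the existence problem to exhibiting a topologically nontrivial min-max structure for $\mathcal J$.

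The scheme proceeds in three steps. First, following \cite{luc}, the sublevel $\mathcal J^L$ for $L \gg 0$ is contractible, via a deformation along large negative constants. Second, a localized version of the Moser--Trudinger inequality \eqref{mt} implies that if $\mathcal J(u) \ll 0$, then either the interior measure $\mu_u := Ke^u / \int_\S Ke^u$ concentrates near at most $N$ points of $\S$, or the boundary measure $\nu_u := he^{u/2} / \int_{\de\S} he^{u/2}$ concentrates near at most $M$ points of $\de\S$, or a mixture of the two phenomena occurs depending on the relative size of $\int_\S Ke^u$ and $\int_{\de\S} he^{u/2}$. This can be encoded into a continuous projection $\Psi_L : \mathcal J^{-L} \to \mathcal X_{N,M}$ into a suitable barycenter-type space $\mathcal X_{N,M}$ built out of the formal barycenter spaces $\S_N$ and $(\de\S)_M$ (e.g.\ a topological join or a pushout parametrized by the interior/boundary mass ratio). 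Third, using standard bubble profiles of Liouville type in $\S$ and half-bubble profiles at $\de\S$, one constructs a test map $\Phi_L : \mathcal X_{N,M} \to \mathcal J^{-L}$ satisfying $\Psi_L \circ \Phi_L \simeq \operatorname{id}$, so that $\mathcal J^{-L}$ retracts homotopically onto $\mathcal X_{N,M}$.

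The conclusion then rests on the non-contractibility of $\mathcal X_{N,M}$. When $\S$ is not simply connected, both $\S_N$ (for $N \ge 1$) and $(\de\S)_M$ (for $M \ge 1$) carry non-trivial reduced homology, which propagates to $\mathcal X_{N,M}$ and gives a nontrivial class in $\widetilde{H}_*(\mathcal X_{N,M})$ for any $(N,M) \ne (0,0)$. When $\S$ is simply connected, $\S_N$ is contractible (any deformation retract of $\S$ lifts to one of $\S_N$) but $(\de\S)_M$ retracts onto a $(2M-1)$-sphere, and the hypothesis $N < M$ is precisely what guarantees, both at the homotopy level and in the construction of $\Phi_L$, that this boundary sphere survives in $\mathcal X_{N,M}$. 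Combined with the contractibility of high sublevels and the compactness of solutions from the blow-up analysis, a standard Lusternik--Schnirelmann or Morse-theoretic argument in the style of \cite{dm,bdm,bjmr} then produces a critical point of $\mathcal J$ at some intermediate min-max level.

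The main technical obstacle is the design and energy estimate of the test map $\Phi_L$. One must place interior Liouville bubbles of some scale $\lambda$ at the prescribed interior points and boundary half-bubbles of scale $\lambda'$ at the prescribed boundary points, carefully tuning $\lambda, \lambda'$ and their relative weighting as the barycenter varies over $\mathcal X_{N,M}$, so that $\mathcal J(\Phi_L(\sigma))$ tends uniformly to $-\infty$. The coupling between the two integrals $\int_\S Ke^u$ and $\int_{\de\S} he^{u/2}$ produces cross-interaction terms that are absent in single mean-field problems, and controlling these terms across the strata of $\mathcal X_{N,M}$ where interior and boundary concentrations interpolate is what enforces the asymmetric comparison $N < M$ in the simply connected case.
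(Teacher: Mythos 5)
Your overall skeleton (contractible high sublevels via \cite{luc}, improved Moser--Trudinger inequalities forcing concentration in low sublevels, a pair of maps $\Phi,\Psi$ with $\Psi\circ\Phi\simeq\mathrm{id}$, compactness from the blow--up analysis) is the same as the paper's. The genuine gap is that you leave the decisive object --- the target space $\mathcal X_{N,M}$ --- as a black box, and the concrete candidates you suggest do not work here. The improved inequality (Lemma \ref{mtimproved} together with Proposition \ref{prop0}) shows that a configuration of $J$ interior and $K$ boundary concentration points is energetically forbidden only when \emph{both} $J+K\ge N+1$ \emph{and} $2J+K\ge M+1$; hence $\Psi$ naturally lands in the asymmetric union of strata $\mathcal X'$ of \eqref{xprim}, not in a join or a mass-ratio pushout of $(\S)_N$ and $(\de\S)_M$. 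This constraint does not decouple into two independent factors (boundary points count once toward the $\rho$--quantization while interior points count twice toward $\rho+\rho'$), which is exactly why the paper stresses that the system-style join construction of \cite{jkm} cannot be imported. Worse, a genuine topological join $\S_N\ast(\de\S)_M$ is contractible as soon as one factor is, so for simply connected $\S$ it is contractible for every $N,M$ and your scheme would prove nothing in the case $N<M$; your remark that ``$N<M$ guarantees the boundary sphere survives'' is precisely the assertion that needs a proof and is not supplied.

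The paper's resolution, which is absent from your proposal, is Proposition \ref{propretract}: rather than computing the (unknown) topology of the mixed space $\mathcal X'$, one retracts it onto a \emph{pure} barycenter space $\mathcal X$ --- $(\widetilde\S)_N$ if $N\ge M$, $(\de\S)_M$ if $N<M$ --- whose homology is known; in the case $N<M$ this uses a cone-type map $\S\to(\de\S)_2$ collapsing the interior to a fixed boundary point. A second consequence of this choice is that the test map $\Phi$ only needs standard single-scale bubbles supported on $\mathcal X$ (all interior, or all boundary), so the two-scale interpolation and the cross-interaction estimates you identify as the ``main technical obstacle'' are entirely avoided. As written, your argument establishes the framework but not the existence statement, in either the simply connected or the multiply connected case.
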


Notice that the case $N=M=0$ is the one considered in Theorem \ref{mtteo}

\

Comparing with previous results obtained with similar methods, the interaction between $\S$ and its boundary plays a major issue. Roughly speaking, when $u$ concentrates at $\de\S$, then both exponential terms are affected, whereas when $u$ concentrates at the interior only one is affected.

For this reason, problem \eqref{meanfield} shares some similarities with systems of two equations having similar features on closed surfaces (see \cite{andav,bjmr,lucatoda,lucand2,lucab2g2,jkm}). However, in our case the interaction between interior and boundary nonlinearities is not symmetric, hence it needs to be treated differently than the case of systems.

In particular, the inequality given by Proposition \ref{prop0} will be essential in capturing the relation between interior and boundary concentration.

\

Another crucial novelty is given by the barycenters used to model concentration on both $\mathring\Sigma$ and $\de\Sigma$ (see \eqref{sigmajk}). Such objects seem to be rather mysterious and their topology has not been completely understood yet.

In order to overcome such an issue, we will use a new topological construction given in Proposition \ref{propretract}. Basically, such barycenters will be mapped on other spaces of barycenters, centered either only at points on $\mathring\Sigma$ or only at $\de\Sigma$. Since the homology of the latter barycenter spaces is well-known, we are finally able to deduce non-contractibility, hence existence of solutions.

\

\begin{remark}
In the paper \cite{lsmr} some obstructions are given to the existence of solutions to problem \eqref{gg} on multiply connected surfaces (Theorems 2.1 and 2.2), consequently also for \eqref{meanfieldgg}. However, Theorem \ref{eximinmax} gives existence of solutions for almost every $\rho$ in the non-geometric case.

This means that the third condition in \eqref{meanfieldgg} is not always satisfied by solutions to \eqref{meanfield} and that problems \eqref{meanfieldgg} and \eqref{meanfield} are indeed different.

However, it is reasonable to hope that the tools and results introduced here will be useful to solve related geometric problems, such as the prescription of Gaussian and geodesic curvatures in presence of conical singularities, see \cite{bdm,dmls,dmlsr} for more details.

\end{remark}

\

The content of the paper is the following: in Section 2 we give a suitable blow--up analysis for solutions and prove a concentration--compactness theorem; in Section 3 we prove a suitable Moser--Trudinger inequality; in Section 4 we show existence of min--max solutions.

\

We want to stress that references are cited in chronological order along this paper.

\

{\bf Notations.}

Let us fix some notations. The metric distance between two points $x,y\in\S$ will be denoted as $\dist(x,y)$. We will denote an open ball centered at a point $p\in\S$ of radius $r>0$ as 

$$B_r(p):=\{x\in\S:\,\dist(x,p)<r\}.$$

We will use the following notation for some subsets of $B_r(p)\subset\R^2$:
\begin{eqnarray*}
B_r^+(p)&:=&\left\{(x_1,x_2)\in B_r(p):\,x_2\ge0\right\};\\
\G_r(p)&:=&\left\{(x_1,x_2)\in\de B_r^+(p):\,x_2=0\right\};\\
\de^+B_r(p)&:=&\de B_r^+(p)\setminus\G_r(p).
\end{eqnarray*}
Given a subset $\Omega\subset\S$ and $\d>0$, we will denote the open $\d$-neighborhood of $\Omega$ as

$$\Omega^\d:=\{x\in\S:\,\dist(x,y)<\d,\,\text{for some}\,y\in\Omega\}.$$

Writing integrals, we will drop the element of area or length induced by the metric; for instance, we will only write $\int_\S Ke^u$ or $\int_{\de\S}he^\frac u2$.

In the estimates we will denote $C$ as a positive constant, independent of the parameters, that would vary from line to line. If we point out its dependence respect to certain parameters, we will indicate it in the subscript, such as $C_\e$ or $C_{\e,\d}$.

\

\section{A blow--up analysis}
\setcounter{equation}0

In this section we present some definitions and properties related to the blow--up of solutions to the problem \eqref{meanfield}. In some points we will be brief since some of the tools and results are rather classical and sometimes they only require minor changes.

Let us consider a sequence of solutions to

\beq\label{meanfieldn}
\left\{\begin{array}{ll}
-\Delta u_n+\frac{2(\rho_n+\rho'_n)}{|\S|}= 2\rho_n\frac{K_ne^{u_n}}{\int_\S K_ne^{u_n}}&\text{in }\S,\vspace{.3cm}\\
\de_\nu u_n=2\rho'_n\frac{h_ne^\frac{u_n}2}{\int_{\de\S}h_ne^\frac{u_n}2}&\text{on }\de\S.
\end{array}\right.
\eeq

The \emph{singular set} (see also \cite{breme}) is defined as

\beq\label{singularset}
\mathcal S:=\left\{p\in\S:\exists x_n\underset{n\to+\infty}\to p\text{ such that }u_n(x_n)\underset{n\to+\infty}\to+\infty\right\}.
\eeq

The pioneer works of \cite{breme,li-sha}, focused on the standard Liouville equation, show that around any isolated point $p\in\mathcal S$ one can rescale the solution and obtain in the limit an entire solution, which is classified. In this approach, the finite mass condition $\int_\S K_ne^{u_n}<+\infty$ plays a decisive role.

In fact, the classification of the solutions to the equation

$$-\Delta v=e^v\quad\quad\quad\text{ in }\R^2,$$
dates back to Liouville (\cite{liouville}), and they form a large family of nonexplicit solutions. However, under the finite mass assumption, all the solutions were classified in \cite{chen-li} and are given by

$$v(x)=\log\frac{8\l^2}{\left(1+\l^2|x-x_0|^2\right)^2},\quad\quad\quad\quad\quad\quad x_0\in\R^2,\quad\l>0.$$

Finite total mass implies also that the set $\mathcal S$ is finite, since $e^u$ behaves like a finite combination of Dirac deltas with weights bounded away from zero. The use of the Green's representation formula gives then some global information on the behavior of the solutions.

\

However, the presence of nonhomogeneous boundary conditions gives a second possible limit problem. As we will see, by a suitable rescaling, one can obtain the following limit problem

$$\left\{\begin{array}{ll}
-\Delta v=ae^v&\text{in }\R^2_+,\vspace{.3cm}\\
\de_\nu v=ce^\frac v2&\text{on }\de\R^2_+.
\end{array}\right.$$

Zhang classified the entire solutions on the half--plane in \cite{zhang} under the finite mass condition

$$\int_{\R^2_+}e^v<+\infty,\quad\quad\quad\quad\quad\quad\int_{\de\R^2_+}e^\frac v2<+\infty.$$

Actually, it is proved that if $a>0$, there exists solution for every $c$, whereas in case $a\le0$, then $c>\sqrt{-2a}$ and the explicit form is given by

$$v(x)=\log\frac{8\l^2}{\left(a+\l^2\left|x-\left(s_0,\frac c{\sqrt2\l}\right)\right|^2\right)^2},\quad\quad\quad\quad\quad\quad s_0\in\R,\quad\l>0$$

Although the shape of the solution depends on the sign of the constants $a$ and $c$, it is remarkable that for any $a,c$, they verify the quantization property

$$a\int_{\R^2_+}e^v+c\int_{\de\R^2_+}e^\frac v2=4\pi.$$

Let us emphasize that there exists a more general classification result, which includes unbounded mass solutions, given by Mira and G\'alvez (see \cite{mira-galvez} for further details). Notice that in the problem \eqref{meanfieldn} the mass is prescribed by the finite values $\rho_n$ and $\rho_n'$, therefore the infinite mass case will not be considered along this paper.

Next, we state the main result of this section.

\begin{theorem}\label{compa}
Let $u_n$ be a sequence of solutions to \eqref{meanfieldn} satisfying

\beq\label{massmass}
\frac1C\le\int_\S K_ne^{u_n}+\left(\int_{\de\S}h_ne^\frac{u_n}2\right)^2\le C,
\eeq
with $K_n\underset{n\to+\infty}\to K$, $h_n\underset{n\to+\infty}\to h$ in the $C^1$ sense such that $K,h$ verify \eqref{h} and $(\rho_n,\rho_n')\underset{n\to+\infty}\to(\rho,\rho')$. Define the singular set $\mathcal S$ as in \eqref{singularset}. Then, up to subsequences, the following alternative holds:

\begin{enumerate}
\item either $u_n$ is uniformly bounded in $L^\infty(\S)$;
\item or $\max_\S u_n\underset{n\to+\infty}\to+\infty$ and $\mathcal S$ is nonempty, finite and

\begin{eqnarray*}
2\rho_n\frac{K_ne^{u_n}}{\int_\S K_ne^{u_n}}&\underset{n\to+\infty}\weakto&8\pi\sum_{p\in\mathcal S\cap\mathring\S}\d_p+\sum_{q\in\mathcal S\cap\de\S}\b_q\d_q,\\
2\rho'_n\frac{h_ne^\frac{u_n}2}{\int_{\de\S}h_ne^\frac{u_n}2}&\underset{n\to+\infty}\weakto&\sum_{q\in\mathcal S\cap\de\S}(4\pi-\b_q)\d_q+\mu'_0,
\end{eqnarray*}
for some $\b_q\in\mathbb R$ and $\mu'_0\in L^1(\de\S)$, with $\mu'_0\equiv0$ if $\mathcal S\cap\de\S\ne\emptyset$.
\end{enumerate}
\end{theorem}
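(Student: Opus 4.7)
The strategy is a Brezis--Merle-type concentration--compactness analysis adapted to the mixed interior/boundary nonlinearity of \eqref{meanfieldn}, followed by a rescaling and classification step at each blow--up point, and finally a globalization step via the Green's representation formula.

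Set $f_n:=2\rho_n\frac{K_ne^{u_n}}{\int_\S K_ne^{u_n}}$ and $g_n:=2\rho'_n\frac{h_ne^{u_n/2}}{\int_{\de\S}h_ne^{u_n/2}}$. By \eqref{massmass} these are bounded in $L^1(\S)$ and $L^1(\de\S)$ respectively, hence up to a subsequence $f_n\weakto\mu$ and $g_n\weakto\nu$ as Radon measures. Write $u_n=\bar u_n+w_n$ with $\bar u_n$ the mean value and apply the Brezis--Merle inequality; at a boundary point this is done by first reflecting across $\de\S$ in local conformal coordinates so that the nonhomogeneous Neumann term becomes a distributional interior source supported on an interior segment. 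One obtains: at every $p\in\S$ for which a suitable weighted local mass (combining $\mu$ and $\nu$ near $p$) stays strictly below an explicit critical threshold, $e^{w_n}$ is bounded in $L^p_{\mathrm{loc}}$ for some $p>1$, hence elliptic regularity gives $u_n-\bar u_n\in L^\infty_{\mathrm{loc}}$ near $p$. Since $\mu,\nu$ are finite, only finitely many atoms can exceed the threshold, and these form $\mathcal S$.

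At each $p\in\mathcal S$ pick $x_n\to p$ with $M_n:=u_n(x_n)\to+\infty$ realising a local maximum and rescale $v_n(y):=u_n(x_n+\e_n y)-M_n$ at the scale $\e_n:=e^{-M_n/2}$. If $p\in\mathring\S$, the rescaled equation converges locally to $-\Delta v=\a e^v$ on $\R^2$ with $v\le v(0)=0$ and $\int_{\R^2}e^v<+\infty$ by the mass bound; the Chen--Li classification yields $\a\int_{\R^2}e^v=8\pi$, i.e.\ $\mu$ charges $p$ with mass $8\pi$. If $p=q\in\de\S$, perform the rescaling in Fermi coordinates so that a neighbourhood of $q$ straightens into a half--ball of $\R^2_+$; the limit profile solves Zhang's half--plane problem with finite interior and boundary mass, yielding the quantization $a\int_{\R^2_+}e^v+c\int_{\de\R^2_+}e^{v/2}=4\pi$. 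Translating back, the total atomic mass at $q$ is $4\pi$, split as $\b_q$ on $\mu$ and $4\pi-\b_q$ on $\nu$ for some $\b_q\in\R$.

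It remains to rule out unexpected residual mass and, in particular, to prove $\mu'_0\equiv 0$ whenever $\mathcal S\cap\de\S\ne\emptyset$. The Green's representation for the Neumann problem, combined with the atoms of $\mu$ and $\nu$ just identified, shows that the presence of a positive atom at any point of $\mathcal S\cap\de\S$ forces $u_n-\bar u_n\to-\infty$ uniformly on every compact subset of $\S\setminus\mathcal S$, in particular on $\de\S\setminus\mathcal S$; combined with \eqref{massmass}, this leaves no room for residual boundary mass and gives $\mu'_0\equiv 0$. If instead $\mathcal S\cap\de\S=\emptyset$ (so only interior blow--ups occur), this mechanism does not trigger on $\de\S$ and an absolutely continuous remainder $\mu'_0\in L^1(\de\S)$ is allowed. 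The main obstacle throughout is the two--scale coupling between $e^u$ in the interior and $e^{u/2}$ on the boundary: this forces a careful recalibration of the Brezis--Merle threshold at boundary points and is precisely what produces the free parameter $\b_q$ in Zhang's classification, which has no analogue in the purely interior setting.
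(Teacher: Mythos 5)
Your overall architecture (minimal--mass lemma $\Rightarrow$ finiteness of $\mathcal S$ and weak convergence to atoms plus residuals; local analysis at each blow--up point; Green's representation to kill the residuals) matches the paper's. The decisive gap is in the quantization step. Rescaling at the maximum and classifying the limit profile via Chen--Li (interior) or Zhang (boundary) only shows that the limit bubble carries mass $8\pi$ (resp.\ total mass $4\pi$); it does \emph{not} show that the atom of $\mu$ at $p$ equals $8\pi$. The bubble accounts only for the mass in $B_{R\e_n}(x_n)$ in the double limit $n\to+\infty$, $R\to+\infty$, and a priori additional mass can sit in the neck region between scales $\e_n$ and $O(1)$, or in secondary bubbles accumulating at the same point; the rescaling argument alone yields only the lower bounds $\a_p\ge8\pi$ and $m(q)\ge2\pi$. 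To upgrade these to equalities one needs precisely the ingredients the paper invokes for Lemma \ref{quant}: a Poho\v zaev identity on shrinking balls around the blow--up point, the decay estimate $u_n+2\log r_n\to-\infty$ on $\dist(x,p)=r_n$ (no neck mass), and a uniform oscillation bound on $\de B_r(p)\cap\S$. Without some substitute for these, your proof establishes a weaker statement than the theorem (the exact constants $8\pi$ and $4\pi-\b_q$ are exactly what feeds the critical set $\G$ and everything downstream).

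Two smaller points. First, the parameters $\rho_n,\rho'_n$ may be negative, so $f_n$ and $g_n$ are signed: your Brezis--Merle threshold must be applied to the positive parts only (as in Lemma \ref{minmass}, which uses $\widetilde K_n^+,\widetilde h_n^+$), and your final Green's--representation step cannot simply speak of ``a positive atom'' --- the paper's Proposition \ref{residual} has to split into cases according to the signs of $\mu_0,\mu'_0$ and decompose the limit $z=z_1+z_2$ to run the contradiction with the finite mass. Second, your globalization paragraph only discusses boundary atoms; you also need the same mechanism at interior atoms to conclude $\mu_0\equiv0$ whenever $\mathcal S\ne\emptyset$ (this is required by alternative \emph{(2)}, which has no interior residual), and you should record why case \emph{(1)} versus \emph{(2)} exhausts all possibilities, i.e.\ why $\bar u_n\to-\infty$ with $u_n$ bounded above is excluded by \eqref{massmass}.
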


We point out that the previous theorem unifies the results given in \cite{bao,guoliu,lsmr}, which consider different assumptions on the sign of the potentials. Recall that in this formulation, the sign of the right hand side depends on $\rho,\rho'$ respectively.

Condition \eqref{massmass} is a \emph{normalization} which is needed to compensate the fact that problem \eqref{meanfieldn} is invariant by addition of constant. In fact, for any solution $u$ to \eqref{meanfieldn}, $u_n:=u\mp n$ still solves \eqref{meanfieldn} but clearly does not satisfy any of the alternative given by the previous theorem, as it goes to $+\infty$ or $-\infty$ everywhere on $\S$. However, given any solution $u_n$ one may add a suitable constant so that it satisfies $\frac{1}{C}\leq \int_\S K_ne^{u_n}+\left(\int_{\de\S}h_ne^\frac{u_n}2\right)^2\leq C$, therefore Theorem \ref{compa} is not restrictive in the study of solutions to \eqref{meanfieldn}

\begin{remark}
The very same result can be extended to the case when the sign of $h$ is constant on the connected components of $\de\S$, and similarly one can also extend Theorem \ref{eximinmax} to this case.

On the other hand, blow--up analysis seems more involved in the case of sign--changing potentials. A priori, compensation phenomena between masses may occur around zeroes of $K$ or $h$ (see for further details \cite{dmls,dmlsr}). Some results concerning blow--up analysis with sign--changing potentials have been provided in \cite{dmlsr}.
\end{remark}

\

From Theorem \ref{compa} one easily deduces that alternative \emph{(2)} can only occur if the parameters belong to the critical set, defined as (see Figure 1):

\beq\label{critic}
\boxed{\G=\left\{(\rho,\rho'):\rho=4\pi N\,\text{ or }\,\rho+\rho'=2\pi M\,\text{ with }N,M\in\mathbb N\right\}.}
\eeq

\begin{corollary}\label{notingamma}
If $(\rho,\rho')\notin\G$, then the set of solutions to the problem \eqref{meanfieldn} is compact up to addition of constants.
\end{corollary}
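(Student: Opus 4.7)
The plan is to obtain this as an immediate consequence of Theorem \ref{compa}. Given a sequence $u_n$ of solutions to \eqref{meanfieldn} with parameters $(\rho_n,\rho'_n)\to(\rho,\rho')\notin\G$, I would first normalize each $u_n$ by an additive constant so that the mass condition \eqref{massmass} is satisfied; because the problem is invariant under addition of constants, this costs nothing. Theorem \ref{compa} then leaves two alternatives: either the sequence is uniformly bounded in $L^\infty(\S)$, in which case standard elliptic regularity (with bounded right--hand side and bounded Neumann data) upgrades this to uniform $C^{2,\alpha}$ bounds and yields compactness, or alternative (2) occurs. The whole task reduces to excluding the second alternative under the hypothesis $(\rho,\rho')\notin\G$.

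Assuming (2), I would test each of the two weak convergences against the constant function $1$. The first yields
\[
2\rho=8\pi\,\#(\mathcal{S}\cap\mathring\S)+\sum_{q\in\mathcal{S}\cap\de\S}\b_q,
\]
and integrating the second over $\de\S$ gives
\[
2\rho'=4\pi\,\#(\mathcal{S}\cap\de\S)-\sum_{q\in\mathcal{S}\cap\de\S}\b_q+\int_{\de\S}\mu'_0.
\]
Now I would split into two cases. If $\mathcal{S}\cap\de\S=\emptyset$, then since $\mathcal{S}$ is nonempty by (2) we must have $\mathcal{S}\cap\mathring\S\ne\emptyset$ and the $\b_q$--sum vanishes, giving $\rho=4\pi\,\#(\mathcal{S}\cap\mathring\S)\in 4\pi\N$. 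Otherwise $\mathcal{S}\cap\de\S\ne\emptyset$, in which case Theorem \ref{compa} forces $\mu'_0\equiv 0$; summing the two identities then causes the $\b_q$'s to cancel and produces
\[
\rho+\rho'=4\pi\,\#(\mathcal{S}\cap\mathring\S)+2\pi\,\#(\mathcal{S}\cap\de\S)\in 2\pi\N.
\]
In either situation $(\rho,\rho')\in\G$, contradicting the hypothesis.

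Hence alternative (2) cannot occur, so the renormalized sequence is bounded in $L^\infty(\S)$; Schauder estimates then give convergence of a subsequence in $C^2(\S)$ modulo additive constants. I do not anticipate any substantial obstacle, since the deep blow--up analysis is entirely absorbed into Theorem \ref{compa}. The only delicate point in the case analysis is the cancellation of the a priori unknown boundary weights $\b_q$ upon summing the two integrated identities---it is precisely this cancellation that explains why $\G$ takes the form of the union of the two lines $\rho\in 4\pi\N$ and $\rho+\rho'\in 2\pi\N$, rather than imposing separate quantization conditions on $\rho$ and $\rho'$ individually.
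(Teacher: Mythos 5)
Your proposal is correct and follows exactly the route the paper intends: the corollary is stated as an immediate consequence of Theorem \ref{compa}, obtained by normalizing via \eqref{massmass}, testing the two weak limits of alternative (2) against the constant function, and observing that either $\rho\in4\pi\N$ (interior blow--up only) or, when $\mathcal S\cap\de\S\ne\emptyset$ so that $\mu_0'\equiv0$, the $\b_q$'s cancel to give $\rho+\rho'\in2\pi\N$. Your case analysis and the closing remark about why $\G$ is a union of two families of lines are accurate; nothing is missing.
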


\begin{figure}[h!]
\centering
\includegraphics[width=.5\linewidth]{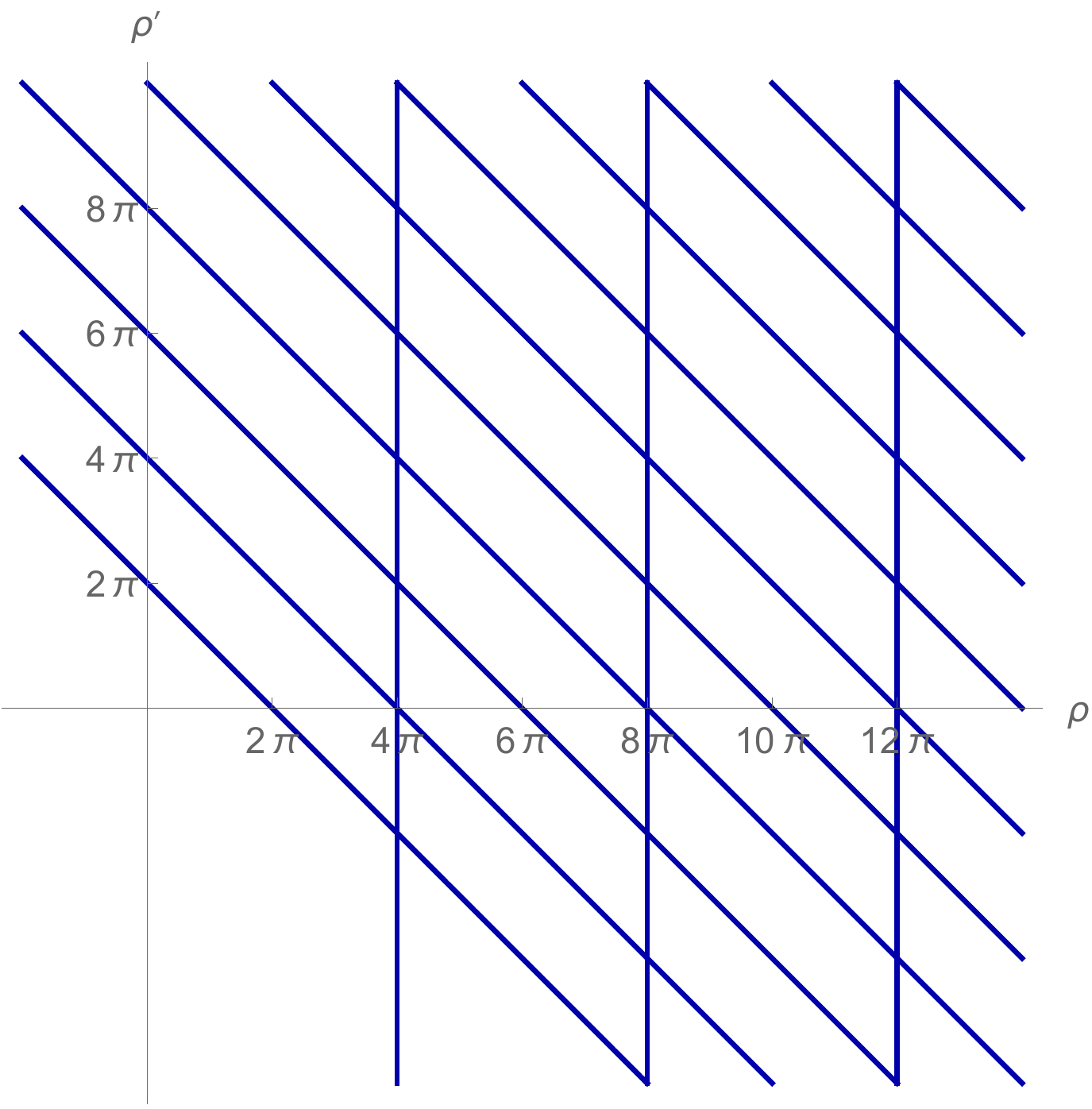}
\caption{The critical value set $\color[rgb]{0,0,.5}\G$}
\end{figure}

\

We will now collect some lemmas in order to prove Theorem \ref{compa}.

We will suppose that $u_n$ is a blowing--up sequence and $p$ is a point of $\mathcal S$. Moreover, by conformal invariance, we can map a neighborhood of $p$ to a disk $B_r(0)\subset\R^2$ if $p\in\mathring\S$ and $B_r^+(0)$ in case that $p\in\de\S$. Therefore, to study the local blow--up we suffice to consider the problem

\beq\label{limitproblem1}
{-\Delta u_n=2\widetilde K_ne^{u_n}}\quad\quad\quad\text{in }B_r(0),
\eeq
or

\beq\label{limitproblem2}
\left\{\begin{array}{ll}
-\Delta u_n=2\widetilde K_ne^{u_n}&\text{ in }B_r^+(0),\vspace{.3cm}\\
\de_\nu u_n=2\widetilde h_ne^\frac{u_n}2&\text{on $\G_r(0)$,}
\end{array}\right.
\eeq
for some $\widetilde K_n,\widetilde h_n$ having the same sign as $\rho_n,\rho_n'$ respectively.

\

A first step in the blow--up analysis is a minimal mass lemma, which implies finiteness of the blow--up set $\mathcal S$.

Such a result was first introduced by Brezis and Merle in \cite{breme} in the case of uniform Dirichlet conditions. We will need a proper version which takes into account the interior and the boundary integrals. Based on an adaptation for the boundary term given in \cite{jostwangz}, a first version was deduced in \cite{bao}. In the latter paper, only constant potentials $\widetilde K_n(x)\equiv\widetilde K_{n,0}>0$ are considered, which is directly adapted for non constant ones below. This argument does not deal with a possible compensation between masses. For that reason, it is an open question to find a sharp estimate.

\begin{lemma}\label{minmass}
Let $u_n$ be a sequence of solutions to \eqref{limitproblem1} for some $\widetilde K_n$ with $\widetilde K_n^+ \le C$. If

$$\int_{B_r^+(0)}\widetilde K^+_ne^{u_n}\le\e<2\pi,$$
then $u_n^+$ is uniformly bounded in $L^{\infty}\left(B_\frac r2(0)\right)$.

Let $u_n$ be a sequence of solutions to \eqref{limitproblem2} for some $\widetilde K_n,\widetilde h_n$ with $\widetilde K_n^+,\widetilde h_n^+\le C$. If

\beq\label{minmassb}
\int_{B_r^+(0)}\widetilde K_n^+e^{u_n} \le\e<\frac{\pi}{2}, \quad  \int_{\G_r(0)}\widetilde h_n^+e^\frac{u_n}2\le\e<\frac{\pi}{2},
\eeq
then $u_n^+$ is uniformly bounded in $L^{\infty}\left(B_\frac r2^+(0)\right)$.

\end{lemma}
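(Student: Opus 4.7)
The plan is a local Brezis--Merle splitting argument, carried out separately for the two cases.

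\textbf{Interior case.} On the ball $B_r(0)$ I would decompose $u_n = v_n + w_n$ by letting $v_n$ solve the Dirichlet problem
\[
-\Delta v_n = 2\widetilde K_n^+ e^{u_n} \ \text{in } B_r(0), \qquad v_n = 0 \ \text{on } \partial B_r(0).
\]
The maximum principle gives $v_n \ge 0$, and the assumed mass bound yields $\|\Delta v_n\|_{L^1(B_r)} \le 2\varepsilon < 4\pi$. The classical Brezis--Merle inequality (\cite{breme}) then provides $q > 1$ (depending on $\varepsilon$) so that $\{e^{q v_n}\}$ is bounded in $L^1(B_r(0))$. The remainder $w_n := u_n - v_n$ satisfies $-\Delta w_n = -2\widetilde K_n^- e^{u_n} \le 0$, hence $w_n$, and therefore $w_n^+$, is subharmonic; the mean value inequality bounds $w_n^+$ in $L^\infty(B_{r/2}(0))$ in terms of its $L^1$-norm, which one controls via $w_n^+ \le u_n^+$ and the ambient $L^1$-integrability of $e^{u_n}$ (a standard normalization in the blow-up context to which this lemma is applied). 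Since $e^{u_n} = e^{v_n}e^{w_n}$ then lies in $L^q_{\mathrm{loc}}$, the equation $-\Delta u_n = 2\widetilde K_n e^{u_n}$ has locally $L^q$ right-hand side, and elliptic $L^p$-theory upgrades this to the claimed uniform $L^\infty$-bound on $u_n^+$ on $B_{r/2}(0)$.

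\textbf{Boundary case.} On the half-ball $B_r^+(0)$ I would solve instead the mixed problem
\[
-\Delta v_n = 2\widetilde K_n^+ e^{u_n} \ \text{in } B_r^+(0), \quad \partial_\nu v_n = 2\widetilde h_n^+ e^{u_n/2} \ \text{on } \Gamma_r(0), \quad v_n = 0 \ \text{on } \partial^+ B_r(0),
\]
again giving $v_n \ge 0$ by the maximum principle. Even-reflect $v_n$ across $\Gamma_r(0)$ to a function on $B_r(0)$; an integration-by-parts computation shows that its distributional Laplacian equals the reflection of $-2\widetilde K_n^+ e^{u_n}$ plus $-4\widetilde h_n^+ e^{u_n/2}$ concentrated on the diameter $\Gamma_r(0)$. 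Its total $L^1$-mass is therefore
\[
4\int_{B_r^+(0)}\widetilde K_n^+ e^{u_n} + 4\int_{\Gamma_r(0)}\widetilde h_n^+ e^{u_n/2} \le 8\varepsilon,
\]
and the assumption $\varepsilon < \pi/2$ is precisely what makes this quantity strictly below $4\pi$, so that the interior Brezis--Merle inequality applies to the reflected function and yields uniform exponential integrability of $v_n$ on $B_r^+(0)$. The remainder $w_n = u_n - v_n$ has non-positive interior source and non-positive Neumann datum on $\Gamma_r(0)$, so $w_n^+$ again satisfies a mean value bound on the half-disk, and the bootstrap of the interior case carries over verbatim.

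\textbf{Main obstacle.} The delicate step is the reflection calculation in the boundary case: only a correct accounting of the factor $2$ produced by the jump of $\partial_{x_2}v_n$ across $\Gamma_r(0)$ yields the threshold $\pi/2$ on each of the two integrals. The remainder is bookkeeping, and the non-constant potentials considered here require only the pointwise bounds $\widetilde K_n^+,\widetilde h_n^+\le C$ to convert mass on the equation into mass on the nonlinearity, exactly as in the constant-potential versions of \cite{jostwangz} and \cite{bao}.
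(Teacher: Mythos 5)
Your interior case matches the paper's (which simply invokes \cite{breme}, Corollary 4), and your boundary case takes a genuinely different route: you fold the Neumann nonlinearity into a measure supported on the diameter by even reflection and then apply the interior Brezis--Merle estimate to the reflected function, whereas the paper splits $u_n$ on the half-ball into an interior part (handled by \cite{breme} after reflection of a \emph{zero-Neumann} problem) plus a harmonic part with measure Neumann data, handled by the Neumann-adapted exponential estimate of \cite{jostwangz}, Lemma 3.2. Your reflection bookkeeping is correct: the distributional Laplacian of the reflected $v_n$ carries the singular term $2\partial_\nu v_n\,\mathcal H^1\llcorner\Gamma_r(0)$, the total mass is $4\int_{B_r^+(0)}\widetilde K_n^+e^{u_n}+4\int_{\Gamma_r(0)}\widetilde h_n^+e^{u_n/2}\le8\e<4\pi$, and this does explain the threshold $\pi/2$. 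The treatment of the remainder $w_n$ (subharmonic after reflection, mean value inequality, $w_n^+\le u_n^+$) is also fine, modulo the ambient mass bound that the paper likewise assumes implicitly.

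There is, however, one genuine gap: the claim that ``the bootstrap of the interior case carries over verbatim.'' To run elliptic regularity up to $\Gamma_{r/2}(0)$ for the Neumann problem you need the boundary datum $2\widetilde h_ne^{u_n/2}$ to lie in $L^q\left(\Gamma_r(0)\right)$ for some $q>1$, hence you need a \emph{trace} exponential estimate $\int_{\Gamma_r(0)}e^{\frac q2v_n}\le C$. The interior Brezis--Merle inequality applied to the reflected function only controls two-dimensional integrals of $e^{qv_n}$; it says nothing about one-dimensional integrals along the diameter, and an $L^q(B_r^+)$ bound on $e^{v_n}$ does not control the trace of $e^{v_n/2}$ on $\Gamma_r(0)$. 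This missing estimate is exactly the second inequality in \cite{jostwangz}, Lemma 3.2, which the paper invokes (note its different constant, $2\pi$ instead of $4\pi$, for the boundary integral). The gap is repairable within your framework: writing $\tilde v_n(x)=\int G(x,y)\,\mathrm d\mu_n(y)$ with $G(x,y)\le\frac1{2\pi}\log\frac C{|x-y|}$ and applying Jensen's inequality, one gets $\int_{\Gamma_r(0)}e^{\alpha\tilde v_n}\le C$ whenever $\frac{\alpha\|\mu_n\|}{2\pi}<1$; since $\|\mu_n\|\le8\e<4\pi$ one may take $\alpha=\frac q2$ with some $q>1$, so the same smallness hypothesis suffices. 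But as written your proof does not supply this step, and without it the $L^\infty$ bound up to the boundary does not follow.
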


\begin{proof}

The first statement follows directly from \cite{breme}, Corollary 4.

For the second one, let us decompose $u_n=u_{1n,+}+u_{1n,-}+u_{2n,+}+u_{2n,-}+u_{3n}$, where $u_{1n,\pm}$ and $u_{2n,\pm}$ satisfy

$$\left\{\begin{array}{ll}
-\Delta u_{1n, \pm}=2\widetilde K_n^{\pm}e^{u_n}=\widetilde f_n^\pm &\text{ in }B_r^+(0),\vspace{.3cm}\\
\de_\nu u_{1n,\pm}=0&\text{on }\G_r(0),\vspace{.3cm}\\
u_{1n,\pm}=0&\text{on }\de^+B_r(0),
\end{array}\right.$$
and

$$\left\{\begin{array}{ll}
\Delta u_{2n,\pm}=0&\text{ in }B_r^+(0),\vspace{.3cm}\\
\de_\nu u_{2n,\pm}=2\widetilde h_n^\pm e^\frac{u_n}2=\widetilde g_n^\pm &\text{on }\G_r(0),\vspace{.3cm}\\
u_{2n}=0&\text{on }\de^+ B_r(0).
\end{array}\right.$$

Clearly, $u_{1n,-},u_{2n,-}\leq0$. Extending $u_{1n,+}$ and $\widetilde f_n^+$ evenly in $B_r(0)$, we can use Corollary 4 of \cite{breme} in order to obtain that

$$\int_{B_r^+(0)}e^{p|u_{1n,+}|}\le C,$$
for some $p>1$. On the other hand, in view of \cite{jostwangz}, Lemma 3.2, we have that, for $0<\d_1<4\pi$ and $0<\d_2<2\pi$ then

$$\int_{B_r^+(0)}e^{\frac{4\pi-\d_1}{\left\|\widetilde g_n^+\right\|_{L^1}}|u_{2n,+}|}\le C\quad\quad\quad\text{and}\quad\quad\quad\int_{\G_r(0)}e^{\frac{2\pi-\d_2}{\left\|\widetilde g_n^+\right\|_{L^1}}|u_{2n,+}|}\le C,$$
where $\left\|\widetilde g_n^+\right\|_{L^1}=\int_{\G_r(0)}\widetilde g_n^+$.

Since $\left\|\widetilde g_n^+\right\|_{L^1}<2\pi$, then

$$\int_{B_r^+(0)}e^{p_1|u_{2n,+}|}\le C\quad\quad\quad\text{and}\quad\quad\quad\int_{\G_r(0)}e^{p_2|u_{2n,+}|}\le C,$$
for some $p_1,p_2>1$. Consequently, $u_{1n,+},u_{2n,+}\in L^1\left(B_r^+(0)\right)$ and, moreover, $u_n^+\in L^1\left(B_r^+(0)\right)$ by the boundedness mass assumption. Notice that $u_{3n}^+= u_n^+ +u_{1n,+}+u_{2n,+}$, so applying the mean value theorem, one gets $u^+_{3,n}\in L^{\infty}\left(B_\frac r2^+(0)\right)$.

Therefore, we have obtained that $\widetilde f_n\in L^q\left(B_\frac r2^+(0)\right)$ and $\widetilde g_n\in L^q\left(\G_\frac r2(0)\right)$ for $q>1$. Finally, standard elliptic estimates provide the conclusion. 

\end{proof}

\

By Lemma \ref{minmass}, we conclude that if $u_n$ blows up somewhere in $\S$, then the \emph{mass} is at least $4\pi$ at the interior and there exist boundary contributions to be precised. Therefore the blow--up set must be finite, as we are assuming the masses to be finite. Moreover, we get

\begin{eqnarray}\label{wconv}
2\rho_n\frac{K_ne^{u_n}}{\int_\S K_ne^{u_n}}&\underset{n\to+\infty}\weakto&\sum_{p\in\mathcal S\cap\mathring\S}\a_p\d_p+\sum_{q\in\mathcal S\cap\de\S}\b_q\d_q+\mu_0,\vspace{.3cm}\\
\nonumber2\rho'_n\frac{h_ne^\frac{u_n}2}{\int_{\de\S}h_ne^\frac{u_n}2}&\underset{n\to+\infty}\weakto&\sum_{q\in\mathcal S\cap\de\S}\g_q\d_q+\mu'_0,
\end{eqnarray}
for some $\a_p,\b_q,\g_q$ with $\a_p\ge4\pi,\b_q,\g_q\in\mathbb{R}$ and $\mu_0\in L^1(\S),\mu'_0\in L^1(\de\S)$.

\

\

Next, we have a quantization of local blow--up masses. Whereas internal mass at each concentration point corresponds to $4\pi$, in case of blow--up at the boundary only the sum of the two masses is quantized.

The proof is rather similar as \cite{bao} (Theorem 1.5), \cite{lsmr} (Lemma 7.5), therefore it will be skipped. It is based on a Poho\v zaev identity around the blow--up point, the inequality

$$ u_n+2\log r_n \to - \infty \quad \mbox{ if $\dist(x,p)= r_n$}$$
for a proper $r_n\to0$ and $p\in \mathcal{S}$ (see Lemma~3.1. in \cite{lwz} for more details) and a uniformly mean oscillation property, namely

$$\max_{\de B_r(p)\cap\S}u_n-\min_{\de B_r(p)\cap\S}u_n\le C.$$

\begin{lemma}\label{quant}
Define, for $p\in\mathcal S$, its \emph{blow--up value} as

$$m(p):=\lim_{r\to0}\lim_{n\to+\infty}\left(\rho_n\frac{\int_{B_r(p)}K_ne^{u_n}}{\int_\S K_ne^{u_n}}+\rho'_n\frac{\int_{B_r(p)\cap\de\S}h_ne^\frac{u_n}2}{\int_{\de\S}h_ne^\frac{u_n}2}\right).$$

Then, $m(p)=4\pi$ if $p\in\mathring\S$ and $m(p)=2\pi$ if $p\in\de\S$.
\end{lemma}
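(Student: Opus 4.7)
The plan is to localise around each blow-up point $p$, use a conformal chart to identify a neighbourhood of $p$ with a disc $B_r(0)\subset\R^2$ (or with a half-disc $B_r^+(0)$ when $p\in\de\S$), rescale the solutions around a maximising sequence, and then compute $m(p)$ by matching the quantisation of the limiting bubble with a Pohozaev-type identity that prevents mass escape between the bubble scale and the fixed scale $r$.

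\textbf{Bubble extraction.} Absorbing the factors $\rho_n/\int_\S K_n e^{u_n}$ and $\rho'_n/\int_{\de\S}h_n e^{u_n/2}$ into new coefficients $\widetilde K_n,\widetilde h_n$, the local problem will take the form \eqref{limitproblem1}--\eqref{limitproblem2}. I would pick $x_n\to p$ where $u_n$ attains a local maximum and $u_n(x_n)=:M_n\to+\infty$, then set $\varepsilon_n:=e^{-M_n/2}$ and $v_n(y):=u_n(x_n+\varepsilon_n y)-M_n$, so that $v_n(0)=0$ and, by the scaling relations $\varepsilon_n^2 e^{M_n}=1$ and $\varepsilon_n e^{M_n/2}=1$, $v_n$ satisfies the rescaled version of the equation on $B_{r/\varepsilon_n}(0)$, respectively on $B_{r/\varepsilon_n}^+(0)$. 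By Lemma \ref{minmass} and standard elliptic estimates one expects $v_n\to v$ in $C^2_{\mathrm{loc}}$. In the interior case $v$ solves $-\Delta v=2\widetilde K_\infty e^v$ on $\R^2$ with finite mass, which forces $\widetilde K_\infty>0$, and Chen--Li then gives $2\widetilde K_\infty\int_{\R^2}e^v=8\pi$; in the boundary case $v$ solves the corresponding half-plane problem and Zhang's classification yields $2\widetilde K_\infty\int_{\R^2_+}e^v+2\widetilde h_\infty\int_{\de\R^2_+}e^{v/2}=4\pi$. After pulling back, this accounts for a PDE mass of exactly $8\pi$ or $4\pi$ concentrated at the bubble scale $|x-x_n|=O(\varepsilon_n)$.

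\textbf{Controlling the intermediate region.} The hard part is to show that the annular region $R\varepsilon_n\le|x-x_n|\le r$ carries no additional mass in the iterated limit $n\to+\infty$, $R\to+\infty$, $r\to 0$, so that the quantisation computed above is the full local mass. Following \cite{bao,lsmr}, I would combine three ingredients: the decay estimate $u_n(x)+2\log|x-x_n|\to-\infty$ along suitable radii $r_n\to 0$ (cf.\ \cite{lwz}), which rules out the formation of secondary bubbles inside the annulus; the Harnack-type oscillation bound $\max_{\de B_r(p)\cap\S}u_n-\min_{\de B_r(p)\cap\S}u_n\le C$, which makes $u_n$ essentially radial on small circles around $p$; and a Pohozaev identity on $B_r(p)$ (resp.\ $B_r^+(p)$) whose boundary terms can then be evaluated using the previous two ingredients and shown to vanish in the iterated limit. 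Since $\int_{B_r(p)\cap\S}2\widetilde K_n e^{u_n}+\int_{B_r(p)\cap\de\S}2\widetilde h_n e^{u_n/2}$ equals $2m(p)$ by the very definition of the blow-up value, matching with the bubble quantisation then forces $m(p)=4\pi$ in the interior and $m(p)=2\pi$ on the boundary.
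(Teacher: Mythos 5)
Your proposal is correct and follows essentially the same route as the paper, which in fact omits the proof entirely, deferring to \cite{bao} (Theorem 1.5) and \cite{lsmr} (Lemma 7.5) and listing precisely your ingredients: a Pohozaev identity around the blow--up point, the decay $u_n+2\log r_n\to-\infty$ on $\{\dist(x,p)=r_n\}$ from \cite{lwz}, and the uniform oscillation bound on $\de B_r(p)\cap\S$. The one point to make explicit is that for $p\in\de\S$ the maximum point could a priori satisfy $\dist(x_n,\de\S)\gg\varepsilon_n$ and produce an interior-type bubble of mass $8\pi$ collapsing onto the boundary, so the boundary quantisation should be read off directly from the Pohozaev identity (which yields the quadratic relation pinning the total local mass at $4\pi$ in the doubled normalisation) rather than from the half-plane classification alone.
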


\begin{remark}
If $\rho\le0$, then there is no interior blow--up, i.e. $\mathcal S\subset\de\S$. This is a direct consequence of the first statement of Lemma \ref{minmass}. It is possible to deduce this property by means of the maximum principle.
\end{remark}

\

Finally, we focus on the residuals $\mu_0$ and $\mu'_0$.

In the classical case without boundary nonlinear terms, studied by \cite{breme}, there is no residual in case of blow--up. Here, the situation is similar concerning the internal mass, but one may still have a residual mass on the boundary because of the fixed amount of mass.

\begin{proposition}\label{residual}
Assume that the singular set $\mathcal S$ is not empty. Then $\mu_0\equiv0$ in \eqref{wconv}.

Moreover, if $\mathcal S\cap\de\S\ne\emptyset$, then $\mu'_0\equiv0$ in \eqref{wconv}.
\end{proposition}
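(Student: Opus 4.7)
The plan is to show that the mean $\bar u_n := \frac{1}{|\S|}\int_\S u_n$ diverges to $-\infty$, while the residual $\tilde u_n := u_n - \bar u_n$ stays locally bounded on $\overline\S\setminus\mathcal S$. Together these two facts force $u_n\to-\infty$ uniformly on compact subsets of $\overline\S\setminus\mathcal S$, from which the vanishing of the two residuals follows by a straightforward bounded/unbounded comparison in the definition of $f_n,g_n$.

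\emph{Step 1 (Green's representation of $\tilde u_n$).} Using the Neumann Green's function $G$ and the associated boundary Poisson kernel $H$ for the operator $-\Delta+\frac{2(\rho_n+\rho'_n)}{|\S|}$, one expresses $\tilde u_n$ as the sum of the convolution of $G$ against $f_n$ (the interior right-hand side of \eqref{meanfieldn}) and of the convolution of $H$ against $g_n$ (the boundary right-hand side). Since $G(x,y)\sim-\frac{1}{2\pi}\log\dist(x,y)$ in the interior while $G(x,y),H(x,y)\sim-\frac{1}{\pi}\log\dist(x,y)$ near $\de\S$, the weak convergence \eqref{wconv} combined with standard elliptic estimates yields local uniform convergence of $\tilde u_n$ on $\overline\S\setminus\mathcal S$ to a limit $\tilde u_\infty$ carrying logarithmic singularities: of strength $-4\log\dist(\cdot,p)$ at each $p\in\mathcal S\cap\mathring\S$ (since $\alpha_p=8\pi$ by Lemma \ref{quant}), and of strength $-4\log\dist(\cdot,q)$ along $\de\S$ at each $q\in\mathcal S\cap\de\S$ (since $\beta_q+\gamma_q=4\pi$ by the same lemma).

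\emph{Step 2 (Divergence of the normalization integrals).} These strengths are exactly the critical thresholds beyond which $e^{\tilde u_\infty}$ ceases to be integrable on $\S$ near an interior point of $\mathcal S$, and beyond which $e^{\tilde u_\infty/2}$ ceases to be integrable on $\de\S$ near a boundary point of $\mathcal S$. Applying Fatou's lemma to the local uniform convergence $\tilde u_n\to\tilde u_\infty$ gives
\begin{equation*}
\int_\S K_n e^{\tilde u_n}\longrightarrow+\infty\text{ whenever }\mathcal S\ne\emptyset,\qquad\int_{\de\S} h_n e^{\tilde u_n/2}\longrightarrow+\infty\text{ whenever }\mathcal S\cap\de\S\ne\emptyset.
\end{equation*}
Combining with the identities $\int_\S K_n e^{u_n}=e^{\bar u_n}\int_\S K_n e^{\tilde u_n}$ and $\int_{\de\S} h_n e^{u_n/2}=e^{\bar u_n/2}\int_{\de\S} h_n e^{\tilde u_n/2}$ and with the hypothesis \eqref{massmass} forces $\bar u_n\to-\infty$.

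\emph{Step 3 (Vanishing of residuals).} For any compact $\omega\subset\overline\S\setminus\mathcal S$, the function $\tilde u_n$ is uniformly bounded on $\omega$, hence
\begin{equation*}
\int_\omega\frac{2\rho_n K_n e^{u_n}}{\int_\S K_n e^{u_n}}=\frac{2\rho_n\int_\omega K_n e^{\tilde u_n}}{\int_\S K_n e^{\tilde u_n}}\longrightarrow 0,
\end{equation*}
since the numerator stays bounded while the denominator diverges by Step 2. Covering $\overline\S\setminus\mathcal S$ by such compacts and using that $\mu_0\in L^1(\S)$ and $\mathcal S$ is finite, we conclude $\mu_0\equiv0$. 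The identical computation with $g_n$ in place of $f_n$ and with the boundary normalization produces $\mu_0'\equiv0$ under the additional hypothesis $\mathcal S\cap\de\S\ne\emptyset$, which is precisely what one needs to make the boundary denominator diverge.

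The main obstacle is Step 2: one has to justify rigorously that the asymptotic singularities of $\tilde u_\infty$ around points of $\mathcal S$ reach exactly the strength predicted by the quantization in Lemma \ref{quant}, without any cancellation that could keep $\int_\S K_n e^{\tilde u_n}$ or $\int_{\de\S} h_n e^{\tilde u_n/2}$ bounded. This requires a fine analysis on shrinking annuli around each blow-up point combined with the minimal mass Lemma \ref{minmass}; at boundary points one must carefully add the logarithmic contributions of $f_n$ (through $G$) and $g_n$ (through $H$) to produce the overall weight $4\pi$.
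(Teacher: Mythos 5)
Your plan is, modulo bookkeeping, the paper's own argument: the quantization of Lemma \ref{quant} together with the Green's representation produces at each blow--up point a logarithmic singularity of exactly the critical strength $-4\log\dist$, whose exponential is non--integrable (on $\S$ at interior points, on $\de\S$ at boundary points), and this forces the normalization integrals to absorb all the mass. The paper phrases this as a local contradiction (if $u_n$ were bounded below on a small sphere around a blow--up point, the limit $\eta$ would satisfy $\eta\ge-4\log|x|-C$ and hence $\int e^{\eta}=\infty$, against the finite--mass bound), while you normalize by the average and let the denominators diverge; these are two equivalent organizations of the same idea.

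The genuine gap is in your Step 2, and it is not quite where you locate it. The strength of the Dirac part of the singularity is exactly what Lemma \ref{quant} provides and you may cite it; the problem is the \emph{regular} part. Since $\rho$ and $\rho'$ are allowed to be negative, the residuals $\mu_0\in L^1(\S)$ and $\mu_0'\in L^1(\de\S)$ need not be nonnegative, and their contribution $w$ to $\tilde u_\infty$ through the Green's function is only a $W^{1,q}$ function, $q<2$, with no pointwise lower bound. Near a blow--up point you therefore only know $\tilde u_\infty=-4\log\dist(\cdot,p)+w$ with $w$ possibly unbounded below, and Fatou gives you nothing: a priori $e^{w}$ could decay fast enough that $\dist(\cdot,p)^{-4}e^{w}$ is integrable, keeping $\int_\S K_ne^{\tilde u_n}$ bounded and breaking the chain Step 2 $\Rightarrow$ Step 3. (Note also the latent circularity: you are trying to prove $\mu_0\equiv0$, yet you must first control the effect of $\mu_0$ on $\tilde u_\infty$.) The paper closes exactly this gap by distinguishing the signs of $\mu_0,\mu_0'$: when both are nonnegative the Green lower bound is immediate, while if, say, $\mu_0\le0$ it splits off the potential $z_2$ of $\mu_0$, invokes the Brezis--Merle estimate $\int e^{p|z_2|}\le C$ for some $p>1$ (Theorem 1 of \cite{breme}), and concludes by H\"older that $\int|x|^{-4/q}\le\left(\int e^{z}\right)^{1/q}\left(\int e^{|z_2|/(q-1)}\right)^{(q-1)/q}<\infty$ for some $1<q<2$, a contradiction. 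Your proposed remedy (shrinking annuli plus the minimal mass Lemma \ref{minmass}) does not address this point; the missing ingredient is the exponential integrability of the Green potential of an $L^1$ datum of small norm.
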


\begin{proof}
First we consider the interior blow--up case, namely $p\in\mathcal S\setminus\de\S$. Without loss of generality suppose that $u_n$ is solution to \eqref{limitproblem1} and $p=\{0\}$. Next, take $r$ small enough such that $\overline{B_r(0)}\cap\mathcal S=\{0\}$.

By contradiction, assume that $u_n\ge-C$ on $\de B_r(0)$. Since $u_n$ is uniformly bounded in $C^{2,\g}(B_r(0)\setminus\{0\})$, it is possible to pass to the limit, up to subsequences, and to obtain

$$\mu_n=2\widetilde K_ne^{u_n}\underset{n\to+\infty}\weakto\mu$$
in the sense of measures on $B_r(0)$ and

$$u_n\underset{n\to+\infty}\to\eta\text{ in }C^2_\mathrm{loc}(B_r(0)\setminus\{0\}),$$
where $\eta$ is a weak solution to $-\Delta\eta=\mu$. By Lemma \ref{quant}, then $\mu(\{0\})\ge8\pi$, hence by the Green's representation formula, we finally arrive at

\beq\label{contr}
\eta \ge-4\log|x|-C\quad\text{in }B_r(0),
\eeq
which is in contradiction with $\int_{B_r(0)}\widetilde K_ne^{u_n}\le C$. Therefore, $u_n\underset{n\to+\infty}\to-\infty$ uniformly in compacts sets of $B_r(0)\setminus\{0\}$ and in particular $\mu_0\equiv0$.

If $p\in\mathcal S\cap\de\S$, we can repeat the previous argument. By a conformal transformation, we can pass to the problem \eqref{limitproblem2} and then apply Lemma~\ref{quant}. Arguing by contradiction, as before $u_n\underset{n\to+\infty}\to\eta$ in, and there exists $z\le \eta$ which satisfies the problem

\begin{equation} \label{z}
\left\{\begin{array}{ll}
-\Delta z=4\pi\d_q+\mu_0\quad&\text{in }B_r^+(0),\\
\de_\nu z=\mu_0'\quad&\text{on }\G_r(0),\\
z=-C\quad&\text{on }\de^+B_r(0).
\end{array}\right.
\end{equation}

Now, let us distinguish between cases depending on the signs of $\mu_0, \mu_0'$.

If $\mu_0,\mu_0'\ge0$, then by Green's representation formula gives \eqref{contr} which violates the condition $\int_{\G_r(0)}\widetilde h_ne^\frac{u_n}2\le C$. Therefore, we deduce that $u_n\underset{n\to+\infty}\to-\infty$ locally uniformly in $B_r^+(0)\setminus\{0\}$ and so $\mu_0=\mu'_0\equiv0$.
 
Suppose now that $\mu_0\le0$.We write $z=z_1+z_2$, where

\begin{equation}\label{z1}
\left\{\begin{array}{ll}
\Delta z_1=0\quad&\text{in }B_r^+(0),\\
\de_\nu z_1=4\pi\d_q+\mu_0'\quad&\text{on }\Gamma_r(0),\\
z_1=-C\quad&\text{on }\partial^+B_r(0),
\end{array}\right.
\end{equation}
and 

\begin{equation}\label{z2}
\left\{\begin{array}{ll}
-\Delta z_2=\mu_0\quad&\text{in }B_r^+(0),\\
\de_\nu z_2=0\quad&\text{on }\Gamma_r(0),\\
z_2=0\quad&\text{on }\partial^+B_r(0).
\end{array}\right.
\end{equation} 

On one hand, by using the Green's representation formula, we obtain that 

$$z_1\ge-4\log|x|-C.$$

On the other hand, applying the minimal mass lemma by Brezis-Merle, namely Theorem~1 in \cite{breme}, one has that

$$\int_{B_r^+(0)}e^{p|z_2|}\le C,$$
for some $p>1$, which can be enlarged if it is necessary.

Therefore, we arrive at

$$\int_{B_r^+(0)}\frac C{|x|^{\frac4q}}\le\int_{B_r^+(0)}e^{\frac{z_1}q}=\int_{B_r^+(0)}e^{\frac{z-z_2}q}\le\left(\int_{B_r^+(0)}e^z\right)^{\frac1q}\left(\int_{B_r^+(0)}e^{\frac{|z_2|}{q-1}}\right)^\frac{q-1}q\le C,$$
by H\"{o}lder inequality with $1<q<2$. Taking $\frac{1}{q-1}<p$, the previous inequality gives us a contradiction. Therefore, $u_n\to -\infty$ and $\mu_0=0=\mu_0'$

\

Similarly, we obtain the same conclusion if $\mu'_0\le0$

\end{proof}

\

Now we are in position to prove Theorem \ref{compa}

\begin{proof}[Proof of Theorem \ref{compa}] Let us differentiate two cases:

\

\textbf{Case 1:} The sequence $u_n$ is bounded from above in $\S$.

Then $K_ne^{u_n}\in L^p(\S)$ and $h_ne^\frac{u_n}2\in L^p(\de\S)$ for some $p>1$, therefore $-\Delta u_n\in L^p(\S)$ and $\de_\nu u_n\in L^p(\de\S)$. Using elliptic regularity estimates, one can deduce that $u_n-\overline u_n\in W^{2,p}$, and in particular $u_n-\overline u_n\in L^{\infty}(\S)$.
If $\overline u_n$ remains bounded, then we obtain \emph{(1)}. Otherwise, a sequence of $u_n$ diverges negatively in $\S$ which violates the condition \eqref{massmass}.

\

\textbf{Case 2:} $u_n$ is not bounded from above.

Applying Lemma \ref{minmass}, we know that $\mathcal S$ is nonempty and finite. Moreover, combining Proposition \ref{residual} and Lemma \ref{quant} we finally deduce \emph{(2)}.

\end{proof}

\

An important consequence of the blow--up analysis is the following result, which will be essential in the forthcoming sections.

\begin{proposition}\label{quasisharp}
Let $u_n$ be a sequence of solutions to \eqref{meanfieldn} under the assumptions of Theorem \ref{compa} with $\mathcal S\cap\de\S\neq\emptyset$, $\rho=2\pi-\rho'<4\pi$ and $\rho\ne0,2\pi$. Then,

$$\frac1C\le\frac{\sqrt{\int_\S K_ne^{u_n}}}{\int_{\de\S}h_ne^\frac{u_n}2}\le C,$$
for some positive constant $C$.
\end{proposition}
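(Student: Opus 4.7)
The plan is to combine Theorem~\ref{compa} with Zhang's classification of finite--mass solutions on $\R^2_+$, exploiting the exclusions $\rho\ne0,2\pi$ to rule out degenerate concentration behaviors. Set $A_n:=\int_\S K_ne^{u_n}$, $B_n:=\int_{\de\S}h_ne^{u_n/2}$; by \eqref{massmass} both are uniformly bounded above, and $A_n+B_n^2$ is bounded away from zero. The claim reduces to showing that each of $A_n$ and $B_n$ stays bounded away from zero.

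I first pin down the blow--up structure. Let $N:=|\mathcal S\cap\mathring\S|$ and $M:=|\mathcal S\cap\de\S|\ge1$. Integrating the two weak limits in alternative~(2) of Theorem~\ref{compa} (using $\mu'_0\equiv0$) gives
\[2\rho=8\pi N+\sum_{q\in\mathcal S\cap\de\S}\b_q,\qquad 2\rho'=4\pi M-\sum_{q\in\mathcal S\cap\de\S}\b_q.\]
Adding these and invoking $\rho+\rho'=2\pi$ yields $2N+M=1$, so necessarily $(N,M)=(0,1)$. Hence there is a unique boundary blow--up point $q\in\de\S$ with $\b_q=2\rho$.

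Assume by contradiction that $A_n\to0$ along a subsequence; by \eqref{massmass}, $B_n$ must stay bounded away from zero. Using isothermal coordinates near $q$, pick $q_n\to q$ realizing $\mu_n:=u_n(q_n)=\max_{B_\d(q)}u_n\to+\infty$, and rescale by
\[\widetilde\e_n:=\sqrt{\frac{A_n}{2|\rho|K(q)}}\,e^{-\mu_n/2},\qquad \widetilde v_n(x):=u_n(q_n+\widetilde\e_nx)-\mu_n,\]
so that $\widetilde\e_n^2e^{\mu_n}=A_n/(2|\rho|K(q))$. A direct computation then yields
\[-\Delta\widetilde v_n=\mathrm{sgn}(\rho)\,\frac{K_n(q_n+\widetilde\e_n\cdot)}{K(q)}e^{\widetilde v_n}+o(1)\quad\text{on }\R^2_+,\]
while the boundary coefficient of $\de_\nu\widetilde v_n$ is proportional to $\sqrt{A_n}/B_n\to0$. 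Standard elliptic estimates (together with $\widetilde v_n(0)=0=\max\widetilde v_n$) produce a locally smooth limit $v$ solving $-\Delta v=\mathrm{sgn}(\rho)e^v$ in $\R^2_+$ with $\de_\nu v=0$ on $\de\R^2_+$. For $\rho>0$, Zhang's classification identifies $v$ as a half--plane Liouville bubble with $\int_{\R^2_+}e^v=4\pi$, and comparing with $\int_{B_r^+}2\rho_nK_ne^{u_n}/A_n\to\b_q=2\rho$ via the change of variables forces $2\rho=4\pi$, i.e.\ $\rho=2\pi$, contradicting the hypothesis. For $\rho<0$, Zhang's theorem precludes any finite--mass solution with $a<0,c=0$ (since it requires $c>\sqrt{-2a}$), giving an immediate contradiction.

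The opposite degeneracy $B_n\to0$ is symmetric. Rescale by $\widetilde\e_n:=B_n/(2|\rho'|h(q))\,e^{-\mu_n/2}$ to obtain a limit problem $-\Delta v=0$ in $\R^2_+$ with $\de_\nu v=\mathrm{sgn}(\rho')e^{v/2}$. If $\rho'>0$, Zhang's classification gives $\int_{\de\R^2_+}e^{v/2}=4\pi$, and the boundary mass quantization yields $2\rho'=4\pi$, i.e.\ $\rho=0$, which is excluded; if $\rho'<0$, Zhang's theorem precludes finite--mass solutions altogether, again a contradiction. The main technical delicacy is the bookkeeping of the two rescaling factors together with the sign discussion for $\rho,\rho'$; the conceptual heart is that $\sqrt{A_n}/B_n$ can only degenerate if the limit bubble concentrates its mass entirely on the interior or on the boundary, which occurs exactly at the excluded values $\rho\in\{0,2\pi\}$.
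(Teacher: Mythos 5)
Your overall architecture is the same as the paper's: reduce to the single boundary blow--up point $q$ (your bookkeeping $2N+M=1$ is correct), rescale around the maximum, identify a half--plane limit via Zhang's classification, and use the mass quantization to contradict $\rho\ne0,2\pi$. The paper packages the two degeneracies into one step by setting $v_n=u_n-2\log\min\left\{\sqrt{A_n},B_n\right\}$, which normalizes \emph{both} integrals to $1$ and pushes the ratio into the potentials $\widetilde K_n,\widetilde h_n$; your version with two separate rescaling factors is an equivalent reorganization. Two remarks, one minor and one substantive. Minor: in the case $A_n\to0$ you must also dispose of the alternative $\dist(q_n,\de\S)/\widetilde\e_n\to+\infty$, in which the limit lives on all of $\R^2$ rather than $\R^2_+$; there the full--plane mass $8\pi$ gives $2\rho\ge8\pi$, contradicting $\rho<4\pi$, but the dichotomy has to be stated (the paper treats it as Case 1).

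The substantive gap is the step ``comparing with $\int_{B_r^+}2\rho_nK_ne^{u_n}/A_n\to\b_q=2\rho$ \dots forces $2\rho=4\pi$'' (and its boundary analogue). What the bubble computation actually gives is that the rescaled balls $B_{R\widetilde\e_n}(q_n)$ carry interior mass tending to $4\pi$ as $R\to\infty$, hence only $\b_q\ge4\pi$, i.e.\ $\rho\ge2\pi$; together with $\rho\ne2\pi$ and $\rho<4\pi$ this leaves the whole range $\rho\in(2\pi,4\pi)$ open. To upgrade the inequality to an equality you must exclude residual mass in the neck between the scale $\widetilde\e_n$ and the fixed scale $r$. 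The natural way to do this is exactly where Lemma \ref{quant} enters: the bubble's \emph{combined} mass $\tfrac12\b_q^{\rm bubble}+\tfrac12\g_q^{\rm bubble}$ equals $2\pi=m(q)$, so the neck's combined mass vanishes; if $\rho>0$ and $\rho'>0$ both neck contributions are nonnegative, hence zero, and you conclude. But when the signs are mixed ($\rho<0$, or $\rho\in(2\pi,4\pi)$ so that $\rho'<0$) a positive interior neck mass can cancel a negative boundary neck mass, and the argument as written does not close; this is precisely the configuration your proof must rule out, and it requires a genuine neck analysis (Poho\v zaev on dyadic annuli / oscillation estimates, as in the proof of Lemma \ref{quant}) rather than a pure sign count. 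As it stands, the sentence ``which occurs exactly at the excluded values $\rho\in\{0,2\pi\}$'' asserts the conclusion rather than proving it.
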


\begin{proof}
By the choice of $\rho,\rho'$, as a consequence of Theorem \ref{compa} we have that $\mathcal S=\{p\}\subset\de\S$. Next, define

$$v_n:=u_n-2\log\min\left\{\sqrt{\int_\S K_ne^{u_n}},\int_{\de\S}h_ne^\frac{u_n}2\right\},$$
which satisfies the equation

$$\left\{\begin{array}{ll}
-\Delta v_n+\frac{2(\rho_n+\rho'_n)}{|\S|}=2\rho_n\widetilde K_ne^{v_n}&\text{in }\S,\vspace{.3cm}\\
\de_\nu v_n=2\rho'_n\widetilde h_ne^\frac{v_n}2&\text{on }\de\S.
\end{array}\right.$$
where

$$\begin{array}{rlll}
\text{either}&\quad\quad\widetilde K_n=K_n&\quad\quad\text{and}&\quad\quad\widetilde h_n=h_n\frac{\sqrt{\int_\S K_ne^{u_n}}}{\int_{\de\S}h_ne^\frac{u_n}2},\\
\text{or}&\quad\quad\widetilde K_n=K_n\frac{\left(\int_{\de\S}h_ne^\frac{u_n}2\right)^2}{\int_\S K_ne^{u_n}}&\quad\quad\text{and}&\quad\quad\widetilde h_n=h_n.
\end{array}$$

Clearly,

\beq\label{mass1}
\int_\S\widetilde K_ne^{v_n}=1\quad\text{and}\quad\int_{\de\S}\widetilde h_ne^\frac{v_n}2=1.
\eeq

Notice that $0<\widetilde K_n,\widetilde h_n\le C$ and $v_n$ blows up at the point $p$ and $\min\left\{\sqrt{\int_\S K_ne^{u_n}},\int_{\de\S}h_ne^\frac{u_n}2\right\}$ is uniformly bounded due to assumption \eqref{massmass}. Now, we take 

$$v_n(x_n)=\max_\S v_n\underset{n\to+\infty}\to+\infty\quad\quad\quad\text{and}\quad\quad\quad\d_n:=e^\frac{-v_n(x_n)}2\underset{n\to+\infty}\to0,$$
hence $x_n\underset{n\to+\infty}\to p$. Next, consider the following rescaling

$$\widetilde v_n=v_n(\d_n x+x_n)+2\log\d_n,$$
which verifies the equation 

\beq\label{problemquant}
\left\{\begin{array}{ll}
-\Delta\widetilde v_n+\frac{2(\rho_n+\rho'_n)}{|\S|}\d_n^2= 2\rho_n\widetilde K_n(\d_n x+x_n)e^{\widetilde v_n}&\text{in $B_\frac{r_n}{\d_n}^+(0)$,}\vspace{.3cm}\\
\de_\nu\widetilde v_n=2\rho'_n\widetilde h_n(\d_n x+x_n)e^{\frac{\widetilde v_n}2}&\text{on $\G_\frac {r_n}{\d_n} (0)$,}
\end{array}\right.
\eeq
where $B_\frac{r_n}{\d_n}^+(0)=B_\frac{r_n}{\d_n}(0)\cap\S$ for some $r_n>0$ to be fixed.

\

Now, for $r>0$, let us distinguish two cases:

\

\textbf{Case 1}: $\frac{\dist\left(x_n,\G_\frac r{\d_n}(0)\right)}{\d_n}\underset{n\to+\infty}\to+\infty.$ 

In this case, let us set $r_n=\frac{|x_n|}2$ in \eqref{problemquant}, so that $\frac{r_n}{\d_n}\to+\infty$. Moreover, $\widetilde v_n\le0$ and it is uniformly bounded in $L^\infty_\mathrm{loc}$ by Harnack inequality. Therefore, it is standard to prove that

$$\widetilde v_n\underset{n\to+\infty}\to\widetilde v\quad\text{in }C^2_\mathrm{loc}\left(\R^2\right),$$
with $\widetilde v$ solving

$$\left\{\begin{array}{ll}
-\Delta\widetilde v=2\rho\widetilde K(p)e^{\widetilde v}\quad&\text{in }\R^2,\vspace{.3cm}\\
\int_{\R^2}\widetilde K(p)e^{\widetilde v}\le C\end{array}\right.$$

However, by \eqref{mass1} and the classification of the solutions to the previous problem, we conclude that

$$\rho_n=\int_\S\rho_n\widetilde Ke^{v_n}\ge\int_{B_\frac{|x_n|}{2\d_n}(x_n)}\rho_n\widetilde Ke^{v_n}\ge\int_{\R^2}\rho\widetilde K(p)e^{\widetilde v}=4\pi,$$
which is a contradiction with the choice of $\rho$.

\

\textbf{Case 2:} $\dist\left(x_n,\G_\frac r{\d_n}(0)\right)=O(\d_n)$.

Now, we consider a sequence $\widetilde v_n$ of solutions to \eqref{problemquant} with $r_n=r>0$. Again, $\widetilde v_n\le0$ and it is uniformly bounded in $L^\infty_\mathrm{loc}$ by Harnack type inequality for \eqref{problemquant} (see \cite{jostwzz}, Lemma A.2). Consequently, up to a subsequence and after a proper translation, we get that

$$\widetilde v_n\underset{n\to+\infty}\to\widetilde v\quad\text{in } C^2_\mathrm{loc}\left(\R^2_+\right),$$
with $\widetilde v$ solving

$$\left\{\begin{array}{ll}
-\Delta\widetilde v=2\rho\widetilde K(p)e^{\widetilde v}&\text{in }\R^2_+,\vspace{.3cm}\\
\de_\nu\widetilde v=2\rho'\widetilde h(p)e^\frac{\widetilde v}2&\text{on }\de\R^2_+,\vspace{.3cm}\\
\int_{\R^2_+}e^{\widetilde v}\le C,\quad\quad\quad\int_{\de\R^2_+}e^\frac{\widetilde v}2\le C
\end{array}\right.$$
satisfying the quantization property $\int_{\R^2_+}\rho\widetilde K(p)e^{\widetilde v}+\int_{\de\R^2_+}\rho'\widetilde h(p)e^\frac{\widetilde v}2=2\pi$.

$$\frac1C\le\widetilde K(p),\widetilde h(p)\le C.$$

Taking into account the definition of $\widetilde K$ and $\widetilde h$ and \eqref{h}, the last fact implies directly the desired conclusion.
\end{proof}

\begin{remark}\label{remark}
This result can be extended for every critical value $(\rho,\rho')\in\G$ such that $\rho\notin4\pi\N$ and $\mathcal S\cap\de\S\neq\emptyset$. On the other hand, if $\rho\in4\pi\N$, then the blow--up may approach the boundary so slowly that the ratio between the masses is not controlled.
\end{remark}

\

\section{A Moser--Trudinger type Inequality}
\setcounter{equation}0

We will now start to study variationally problem \eqref{meanfield}. In particular, this section is devoted to prove a Moser--Trudinger inequality \eqref{mt}, which generalizes previous inequalities \eqref{chygineq}, \eqref{liliuineq}.

Precisely, we have the following theorem:

\begin{theorem}\label{mtteo} Inequality $\eqref{mt}$ holds true. In particular, the energy functional $\mathcal J_{\rho,\rho'}$ is:
\begin{enumerate}
\item bounded from below on $H^1(\S)$ if $\rho\le4\pi$, $\rho+\rho'\le2\pi$ and $(\rho,\rho')\ne(4\pi,-2\pi)$;
\item coercive on $\overline H^1(\S)$ if and only if $\rho<4\pi,\rho+\rho'<2\pi$.
\end{enumerate}
\end{theorem}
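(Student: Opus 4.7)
The plan is to first extend the Moser--Trudinger inequality \eqref{mt} to $\rho \le 4\pi$ (and, symmetrically, to $\rho<0$), and then read off items (1)--(2) as direct variational consequences. For $\rho\in[0,2\pi]$, \eqref{mt} is a convex combination of Chang--Yang \eqref{chygineq} and Li--Liu \eqref{liliuineq} (both sides of \eqref{mt} transform equally under $u\mapsto u+c$, so we may normalize the interior mean). To push into $\rho\in(2\pi,4\pi]$, I argue by contradiction. Introduce
\[
G_\rho(u):=\int_\S|\nabla u|^2+\tfrac{8\pi}{|\S|}\int_\S u-4\rho\log\int_\S e^u-8(2\pi-\rho)\log\int_{\de\S}e^{u/2},
\]
so \eqref{mt} is equivalent to $\inf_{\overline H^1(\S)} G_\rho>-\infty$. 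If this infimum is $-\infty$ at some $\rho\in(2\pi,4\pi]$, by subcritical approximation $\rho_n\nearrow\rho$ and Ekeland's variational principle I obtain a Palais--Smale sequence $u_n\in\overline H^1(\S)$ with $G_{\rho_n}(u_n)\to-\infty$ that asymptotically solves \eqref{meanfield} with $K\equiv h\equiv 1$ and parameters $\rho_n,\,2\pi-\rho_n$ (these are exactly the Euler--Lagrange data of $G_{\rho_n}/2$). Theorem \ref{compa} applies after a harmless additive normalization securing \eqref{massmass}.

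The bounded alternative of Theorem \ref{compa} contradicts $G_{\rho_n}(u_n)\to-\infty$. In the blow-up alternative, Lemma \ref{quant} rules out interior bubbling, as it would force $\rho\in 4\pi\mathbb N$, impossible for $\rho\in(2\pi,4\pi)$; hence the blow-up set is a single boundary point (consistent with the quantum $2\pi$ and $\rho_n+\rho_n'=2\pi$). Now Proposition \ref{quasisharp} pins the relative scale $\sqrt{\int_\S e^{u_n}}/\int_{\de\S}e^{u_n/2}\sim 1$, and rescaling around the concentration point produces a classified Zhang half-plane profile whose Dirichlet energy is finite and explicitly computable. Pairing this computable contribution against $G_{\rho_n}(u_n)\to-\infty$ yields a quantitative contradiction, so \eqref{mt} extends to $\rho\le 4\pi$.

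\textbf{Variational consequences.} For part (1), use \eqref{h} to absorb $\log K,\log h$ into a constant and apply \eqref{mt} (divided by 2) with parameter $\rho$ to the gradient term of $\mathcal J_{\rho,\rho'}$; on $\overline H^1(\S)$ this gives
\[
\mathcal J_{\rho,\rho'}(u)\ge 4(2\pi-\rho-\rho')\log\int_{\de\S}he^{u/2}-C.
\]
The residual log is controlled from below whenever at least one of $\rho\le 4\pi$, $\rho+\rho'\le 2\pi$ is strict, via a trace/Jensen bound or \eqref{liliuineq} applied with the spare coefficient; the corner $(\rho,\rho')=(4\pi,-2\pi)$ is precisely where both \eqref{mt} at its sharp threshold and the residual vanishing coincide, forcing its exclusion from the argument. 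For part (2), given $\rho<4\pi$ and $\rho+\rho'<2\pi$, choose $\epsilon>0$ so that $\rho/(1-\epsilon)\le 4\pi$ and $(\rho+\rho')/(1-\epsilon)\le 2\pi$, and observe the identity
\[
\mathcal J_{\rho,\rho'}(u)=\tfrac{\epsilon}{2}\int_\S|\nabla u|^2+(1-\epsilon)\,\mathcal J_{\rho/(1-\epsilon),\,\rho'/(1-\epsilon)}(u)\ge\tfrac{\epsilon}{2}\int_\S|\nabla u|^2-C
\]
by part (1); Poincar\'e on $\overline H^1(\S)$ upgrades this to $H^1$-coercivity. For the converse in (2), Liouville bubbles at an interior point (when $\rho\ge 4\pi$) and Zhang half-plane bubbles at a boundary point (when $\rho+\rho'\ge 2\pi$) drive $\mathcal J_{\rho,\rho'}\to-\infty$, ruling out coercivity.

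\textbf{Main obstacle.} The crux is the blow-up step in the range $\rho\in(2\pi,4\pi)$: a boundary concentration point redistributes mass between $\int_\S e^u$ and $\int_{\de\S}e^{u/2}$ asymmetrically between the two constraints of \eqref{mt}, so Proposition \ref{quasisharp} is essential to rigidify the relative scale before matching the finite Dirichlet energy of the limit half-plane profile against $G_{\rho_n}(u_n)\to-\infty$. Without this scale-fixing, the interior and boundary integrals could drift to incompatible scales and the energy comparison would fail.
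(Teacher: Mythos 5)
Your overall architecture (interpolation for $\rho\le2\pi$, contradiction plus blow--up for the remaining range, the scaling identity for coercivity, test functions for the converse) matches the paper, but the central step has a genuine gap. You produce, via Ekeland, a Palais--Smale sequence that only ``asymptotically solves'' the Euler--Lagrange equation, and then invoke Theorem \ref{compa}. That theorem and all the machinery behind it (the Brezis--Merle minimal mass lemma, the Harnack/oscillation estimates, the Poho\v zaev identity of Lemma \ref{quant}, Proposition \ref{quasisharp}) are proved for exact solutions of \eqref{meanfieldn}; an $H^{-1}$-small error term is not enough to run any of these arguments, and Palais--Smale sequences for mean field functionals are notoriously badly behaved. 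The paper avoids this with the Jost--Wang device (Lemma \ref{perturb}): one perturbs the functional by $F(u)=f\left(\int_\S|\n u|^2\right)$ so that for nearby coercive parameters the perturbed functional still has genuine minimizers, and these solve an exact PDE with coefficient $1+F'(u_n)\to1$, to which Theorem \ref{compa} does apply. Without this (or Struwe's monotonicity trick) your contradiction argument does not get off the ground. Relatedly, your closing ``energy comparison'' is not a proof: the Zhang/Liouville bubble has infinite (logarithmically divergent) Dirichlet energy, so there is no ``finite, explicitly computable'' contribution to pair against $G_{\rho_n}(u_n)\to-\infty$. The paper instead uses Proposition \ref{quasisharp} to replace $\log\int_{\de\S}h e^{u_n/2}$ by $\tfrac12\log\int_\S Ke^{u_n}$ up to a constant and then concludes with the Li--Liu inequality \eqref{liliuineq}.

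A second, smaller gap: in your part (1) you reduce to bounding $4(2\pi-\rho-\rho')\log\int_{\de\S}he^{u/2}$ from below. On $\overline H^1(\S)$ the only available lower bound (Jensen on $\de\S$ plus the trace inequality) is $\log\int_{\de\S}e^{u/2}\ge-C\sqrt{\int_\S|\n u|^2}-C$, which is not a constant; it can be absorbed only if a spare $\e\int_\S|\n u|^2$ survives from the application of \eqref{mt}, i.e. only if $\rho<4\pi$ strictly. The edge $\rho=4\pi$, $\rho+\rho'<2\pi$ therefore does not follow from \eqref{mt} by your bookkeeping; the paper treats it separately (first half of the proof of (2)) by taking minimizers along $\rho_n\nearrow4\pi$, showing via the quantization that any blow--up must be interior, so that $u_n$ stays bounded above on $\de\S$ and Chang--Yang \eqref{chygineq} suffices.
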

Here, $\overline H^1(\S)$ is the subspace of $H^1(\S)$ containing only functions with zero average

$$\overline H^1(\S):=\left\{u\in H^1(\S):\,\int_\S u=0\right\}.$$

Notice that the energy functional $\mathcal J_{\rho,\rho'}$ can never be coercive on the whole space $H^1(\S)$, since it is invariant by addition of constant, as well as equation \eqref{meanfield} is. On the other hand, one can choose $\sqrt{\int_\S|\n u|^2}$ as an equivalent norm on $\overline H^1(\S)$, due to Poincar\'e inequality. For this reason, we will be discussing coercivity of $\mathcal J$ on $\overline H^1(\S)$ and we will omit the space we are considering.

\

As an immediate corollary of Theorem~\ref{mtteo}, in the coercivity case we get minimizing solutions to the double mean field equation, that is Theorem~\ref{ExiMin} is proved.

\

By comparing Theorem \ref{mtteo} with the blow--up analysis from Section 2, we see that $\mathcal J_{\rho,\rho'}$ is coercive for $\rho,\rho'$ in an open region disjointed from the critical value set defined by \eqref{critic}. This is not a coincidence, as Corollary \ref{notingamma} will be used in the proof of Theorem \ref{mtteo}.

\begin{figure}[h!]
\centering
\includegraphics[width=.5\linewidth]{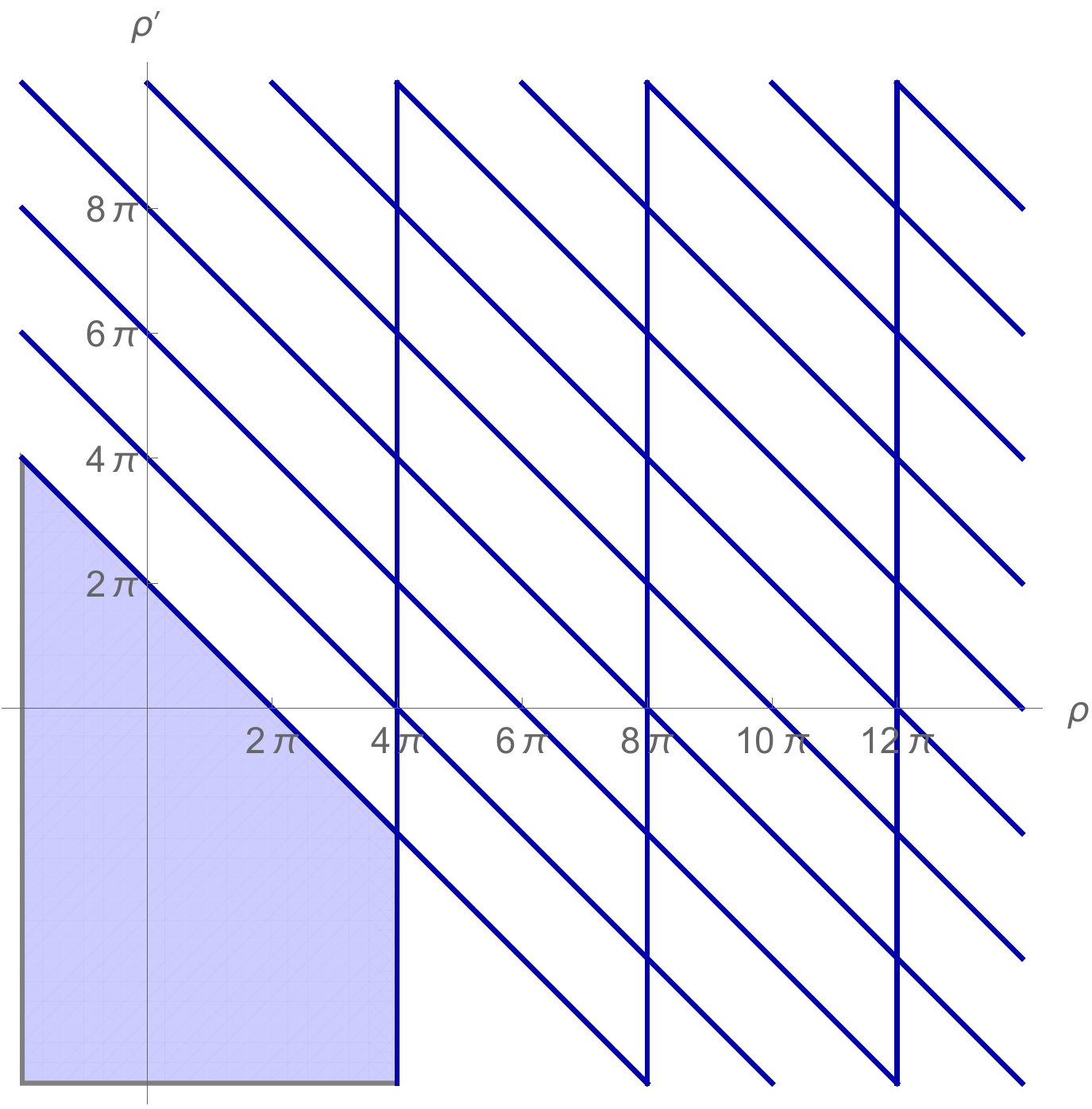}
\caption{The range of parameters for which $\mathcal J_{\rho,\rho'}$ is coercive and the critical set.}
\end{figure}

\

The \emph{only if} part in Theorem \ref{mtteo} is rather easy to be proved, using suitable test functions.

\begin{proposition}\label{nonmt}
If either $\rho>4\pi$ or $\rho+\rho'>2\pi$, then $\inf_{u\in H^1(\S)}\mathcal J_{\rho,\rho'}=-\infty$.

If either $\rho\ge4\pi$ or $\rho+\rho'\ge2\pi$, then $\mathcal J_{\rho,\rho'}$ is NOT coercive.
\end{proposition}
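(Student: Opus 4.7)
The plan is to exhibit explicit Liouville-type bubble test functions that drive $\mathcal J_{\rho,\rho'}$ to $-\infty$ in the strict case, and keep it bounded while forcing $\|\n u\|_{L^2}\to+\infty$ in the equality case. Two flavours of bubble are needed: one concentrating at an interior point $p\in\mathring\Sigma$ (to exploit $\rho>4\pi$), and one at a boundary point $q\in\de\Sigma$ (to exploit $\rho+\rho'>2\pi$).

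For the interior bubble I would fix $p\in\mathring\Sigma$, pass to normal coordinates, and set
\[U_\l(x):=-4\log\max\bigl(|x-p|,\l^{-1}\bigr),\]
truncated by a smooth cut-off supported in a ball $B_R(p)\subset\mathring\Sigma$. Splitting the integration between the inner disk $|x-p|\le\l^{-1}$ and the annulus $\l^{-1}\le|x-p|\le R$, the classical Liouville-bubble estimates give, as $\l\to+\infty$,
\[\int_\S|\n U_\l|^2=32\pi\log\l+O(1),\quad\int_\S U_\l=O(1),\quad\log\int_\S Ke^{U_\l}=2\log\l+O(1),\]
while $\int_{\de\S}he^{U_\l/2}$ stays bounded because $U_\l|_{\de\S}$ is bounded as soon as $p$ is kept at a fixed distance from $\de\S$. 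Plugged into \eqref{j}, this yields $\mathcal J_{\rho,\rho'}(U_\l)=(16\pi-4\rho)\log\l+O(1)$, which tends to $-\infty$ exactly when $\rho>4\pi$.

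For the boundary bubble I would fix $q\in\de\Sigma$, work in Fermi half-plane coordinates around $q$, and use the same profile, now supported in a half-ball $B_R^+(q)$. Exactly half of the interior gradient integral survives, while a new boundary contribution appears; the parallel calculations give
\[\int_\S|\n U_\l|^2=16\pi\log\l+O(1),\quad\log\int_\S Ke^{U_\l}=2\log\l+O(1),\quad\log\int_{\de\S}he^{U_\l/2}=\log\l+O(1),\]
together with $\int_\S U_\l=O(1)$, so that $\mathcal J_{\rho,\rho'}(U_\l)=(8\pi-4(\rho+\rho'))\log\l+O(1)\to-\infty$ whenever $\rho+\rho'>2\pi$. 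The non-coercivity statement then follows by applying the very same bubbles at the critical thresholds $\rho=4\pi$ or $\rho+\rho'=2\pi$: the coefficient of $\log\l$ now vanishes, so $\mathcal J_{\rho,\rho'}(U_\l)=O(1)$, while $\|\n U_\l\|_{L^2}\to+\infty$. A direct check on \eqref{j} shows $\mathcal J_{\rho,\rho'}(u+c)=\mathcal J_{\rho,\rho'}(u)$ for every constant $c$, so replacing $U_\l$ by $U_\l-\overline{U_\l}$ places the sequence in $\overline H^1(\S)$ without changing either energy or Dirichlet norm.

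No serious obstacle is expected; the argument is the standard saturating-bubble calculation adapted to the mixed interior/boundary setting. The two points requiring care are (i) the smooth cut-off extending $U_\l$ to all of $\S$, whose contribution to $\|\n U_\l\|_{L^2}^2$ and to the exponential integrals is only $O(1)$ because $U_\l$ is bounded on the cut-off annulus, and (ii) the replacement of $K$ (resp.\ $h$) by a positive constant inside the logarithm, which is justified by continuity of the potentials and hypothesis \eqref{h}. The signs of $\rho,\rho'$ only affect $O(1)$ corrections in the two bubble computations, so the analysis is uniform across the whole range covered by the statement.
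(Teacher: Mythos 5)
Your proposal is correct and follows essentially the same route as the paper: the paper uses the standard Liouville bubbles $\phi^\l_p=\log\bigl(\tfrac{\l}{1+\l^2\dist^2(x,p)}\bigr)^2$ centered at an interior or a boundary point, which differ from your truncated-logarithm profiles only by an additive constant plus $O(1)$ terms, and the resulting expansions $(16\pi-4\rho)\log\l+O(1)$ and $(8\pi-4(\rho+\rho'))\log\l+O(1)$ coincide with the paper's. The treatment of the borderline cases (vanishing coefficient of $\log\l$ with divergent Dirichlet energy) is also identical.
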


\begin{proof}
It is enough to consider, for $\l>0$ and $p\in\S$,

$$\phi^\l_p(x)=\log\left(\frac{\l}{1+\l^2\dist^2(x,p)}\right)^2.$$

One easily gets the following and well--known estimates (see for instance \cite{dm,guoliu}):

\begin{eqnarray*}
\int_\S\left|\n\phi^\l_p\right|^2&=&\left\{\begin{array}{ll}32\pi\log\l+O(1)&\text{if }p\in\mathring\S,\vspace{.3cm}\\16\pi\log\l+O(1)&\text{if }p\in\de\S;\end{array}\right.\\
\log\int_\S Ke^{\phi^\l_p}&=&O(1);\\
\log\int_{\de\S}he^{\frac{\phi^\l_p}2}&=&\left\{\begin{array}{ll}-\log\l+O(1)&\text{if }p\in\mathring\S,\vspace{.3cm}\\O(1)&\text{if }p\in\de\S;\end{array}\right.\\
\int_\S\phi^\l_p&=&-2|\S|\log\l+O(1).
\end{eqnarray*}

Therefore, by taking $p\in\mathring\S$ and evaluating $\mathcal J_{\rho,\rho'}$ one gets

$$\mathcal J_{\rho,\rho'}\left(\phi^\l_p\right)\underset{\l\to+\infty}\to-\infty\quad\text{if }\rho>4\pi;\quad\quad\quad\quad\quad\quad\mathcal J_{\rho,\rho'}\left(\phi^\l_p\right)\le C<+\infty\quad\text{if }\rho\ge4\pi.$$

On the other hand, if $p\in\de\S$, the same calculations give:

$$\mathcal J_{\rho,\rho'}\left(\phi^\l_p\right)\underset{\l\to+\infty}\to-\infty\quad\text{if }\rho+\rho'>2\pi;\quad\quad\quad\quad\quad\quad\mathcal J_{\rho,\rho'}\left(\phi^\l_p\right)\le C<+\infty\quad\text{if }\rho+\rho'\ge2\pi;$$
this concludes the proof.
\end{proof}

\

As a first step, by mixing inequalities \eqref{chygineq} and \eqref{liliuineq}, one easily obtains Theorem \ref{mtteo} for a small range of parameters.

\begin{lemma}\label{interpol}
The energy functional $\mathcal J_{\rho,\rho'}$ is bounded from below if $\rho,\rho'\le2\pi$ and $\rho+\rho'\le2\pi$.
\end{lemma}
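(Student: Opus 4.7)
The plan is to carry out the convex--combination argument alluded to in the paragraph preceding the statement: blend the Chang--Yang inequality \eqref{chygineq} and the Li--Liu inequality \eqref{liliuineq} with weights tailored to $\rho$ and $\rho'$, then absorb the residual cross term by a Poincar\'e--trace estimate.

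By the translation invariance $\mathcal J_{\rho,\rho'}(u+c) = \mathcal J_{\rho,\rho'}(u)$, I may normalize $\bar u_\S := \frac{1}{|\S|}\int_\S u = 0$. Using \eqref{h} to pass from $K, h$ to $1$ at the cost of additive constants, in the main case $\rho,\rho'\ge 0$ I multiply \eqref{chygineq} by $\rho/(4\pi)\in[0,\tfrac12]$ and \eqref{liliuineq} by $\rho'/(4\pi)\in[0,\tfrac12]$ and sum, obtaining
\[
2\rho\log\int_\S Ke^u + 4\rho'\log\int_{\de\S}he^{u/2} \;\le\; \frac{\rho+\rho'}{4\pi}\int_\S|\nabla u|^2 + \frac{2\rho}{|\S|}\int_\S u + \frac{2\rho'}{|\de\S|}\int_{\de\S}u + C.
\]
Plugged into the definition \eqref{j} of $\mathcal J_{\rho,\rho'}$, the $\frac{2\rho}{|\S|}\int_\S u$ cancels part of $\frac{2(\rho+\rho')}{|\S|}\int_\S u$ and one is left with
\[
\mathcal J_{\rho,\rho'}(u) \;\ge\; \frac{2\pi-(\rho+\rho')}{4\pi}\int_\S|\nabla u|^2 \;+\; 2\rho'\bigl(\bar u_\S - \bar u_{\de\S}\bigr) - C.
\]

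Under the hypothesis $\rho+\rho' \le 2\pi$ the gradient coefficient is non--negative. Since $u\mapsto \bar u_\S - \bar u_{\de\S}$ is a continuous linear functional on $H^1(\S)$ vanishing on constants, the Poincar\'e inequality combined with the trace embedding yields $|\bar u_\S - \bar u_{\de\S}|\le C\|\nabla u\|_{L^2(\S)}$, and Young's inequality absorbs the cross term into the gradient term whenever $\rho+\rho' < 2\pi$ strictly, giving $\mathcal J_{\rho,\rho'}\ge -C$. The cases where $\rho \le 0$ or $\rho' \le 0$ are handled symmetrically: for each log term whose coefficient in $\mathcal J_{\rho,\rho'}$ is non--negative, one replaces \eqref{chygineq} or \eqref{liliuineq} with the Jensen lower bound $\log\int_\S Ke^u \ge \bar u_\S - C$ (or its boundary analogue), producing only a linear contribution absorbable as before; the effective gradient coefficient then reads $(2\pi-\rho^+-(\rho')^+)/(4\pi)$, non--negative thanks to $\rho,\rho' \le 2\pi$.

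The main obstacle I anticipate is the borderline case $\rho^+ + (\rho')^+ = 2\pi$ with non--vanishing cross coefficient (e.g.\ the segment $\rho,\rho'>0$, $\rho+\rho'=2\pi$), where the gradient coefficient becomes zero and the Young absorption of $2\rho'(\bar u_\S - \bar u_{\de\S})$ degenerates; the crude bound $|\bar u_\S - \bar u_{\de\S}|\le C\|\nabla u\|_{L^2}$ is evidently not tight on minimizing sequences, as bubble profiles $\phi^\lambda_p$ with $p\in\de\S$ confirm that $\mathcal J_{\rho,\rho'}$ stays bounded. I would resolve this either by exploiting the concavity in $(\rho,\rho')$ of $\inf_u \mathcal J_{\rho,\rho'}(u)$ (which is the infimum of an affine family in the parameters, hence concave) to pass from the strict interior to the closed region, or by invoking a second--order (variance--type) refinement of Li--Liu carrying an $(\bar u_\S - \bar u_{\de\S})^2$ deficit. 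In the paper's global picture this borderline is ultimately subsumed by the sharp inequality \eqref{mt} that Theorem \ref{mtteo} proves via the blow--up analysis of Section~2.
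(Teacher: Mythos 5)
Your interpolation of \eqref{chygineq} and \eqref{liliuineq} with weights proportional to $\rho$ and $\rho'$ is exactly the paper's starting point, and your treatment of negative parameters via Jensen is fine. The genuine gap is in how you reconcile the boundary average $\bar u_{\de\S}$ coming from \eqref{liliuineq} with the interior average appearing in \eqref{j}. You estimate $|\bar u_\S-\bar u_{\de\S}|\le C\|\n u\|_{L^2}$ and absorb the cross term by Young's inequality, which, as you yourself observe, fails precisely when $\rho^++(\rho')^+=2\pi$ --- and the statement is claimed with non-strict inequalities, so the segment $\rho+\rho'=2\pi$, $\rho,\rho'\ge0$ (including the endpoint $(0,2\pi)$, where even the single inequality \eqref{liliuineq} needs its boundary average converted) is part of what must be proved. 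Neither of your proposed rescues is carried out, and the concavity one does not close the gap as stated: $\inf_u\mathcal J_{\rho,\rho'}$ is indeed concave as an infimum of affine functions, but a boundary point of the convex region $\{\rho,\rho'<2\pi,\ \rho+\rho'<2\pi\}$ is not a convex combination of interior points, so concavity only propagates lower bounds \emph{along} the boundary once at least some boundary points are established independently --- which brings you back to the original problem. The variance-refined Li--Liu inequality you invoke is not a cited result and would itself require proof.

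The paper's device is cleaner and loses nothing: instead of estimating the mismatch between the two averages, apply the interpolated inequality to $u-\vfi$, where $\vfi$ is the fixed smooth solution of the auxiliary Neumann problem \eqref{phi}. Since $\vfi$ is bounded, the logarithmic terms change only by additive constants, while expanding $\int_\S|\n(u-\vfi)|^2$ and integrating by parts gives $-2\int_\S\n u\cdot\n\vfi=2\int_\S u\,\Delta\vfi-2\int_{\de\S}u\,\de_\nu\vfi=\frac{4\rho'}{|\S|}\int_\S u-\frac{4\rho'}{|\de\S|}\int_{\de\S}u$, which \emph{exactly} cancels the boundary average and replaces it by the interior one. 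There is no residual cross term to absorb, so the borderline $\rho+\rho'=2\pi$ is covered with no extra argument. I recommend you replace the trace/Young absorption step with this substitution; the rest of your proof then goes through as written.
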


\begin{proof}
Multiplying inequality \eqref{chygineq} by $\frac\rho{2\pi}$, inequality \eqref{mtineq} by $\frac{\rho'}{2\pi}$ and adding both underlying expressions, one has that

$$4\rho\log\int_\S e^u+8\rho'\log\int_{\de\S}e^\frac u2\le\frac{\rho+\rho'}{2\pi}\int_\S\left|\n u\right|^2+\frac{4\rho}{|\S|}\int_\S u+\frac{4\rho'}{|\de\S|}\int_{\de\S}u+C_\rho.$$

Now, we can estimate the mean value on $\de\S$ in terms of the mean value on $\S$, as in \cite{sergio}. Precisely, we apply the previous inequality to $u-\vfi$, with $\vfi\in C^2(\S)$ solving \eqref{phi}. Therefore,

\begin{eqnarray*}
4\rho\log\int_\S Ke^u+8\rho'\log\int_{\de\S}he^\frac u2&\le&4\rho\log\int_\S e^{u-\vfi}+8\rho'\log\int_{\de\S}e^\frac{u-\vfi}2+C\\
&\le&\frac{\rho+\rho'}{2\pi}\int_\S\left|\n(u-\vfi)\right|^2+\frac{4\rho}{|\S|}\int_\S(u-\vfi)+\frac{4\rho'}{|\de\S|}\int_{\de\S}(u-\vfi)+C_\rho.\\
&\le&\int_\S\left(|\n u|^2+|\n\vfi|^2-2\n u\cdot\n\vfi\right)+\frac{4\rho}{|\S|}\int_\S u+\frac{4\rho'}{|\de\S|}\int_{\de\S}u+C\\
&\le&\int_\S|\n u|^2+2\left(\int_\S u\Delta\vfi-\int_{\de\S}u\de_\nu\vfi\right)+\frac{4\rho}{|\S|}\int_\S u+\frac{4\rho'}{|\de\S|}\int_{\de\S}u+C\\
&=&\int_\S|\n u|^2+\frac{4(\rho+\rho')}{|\S|}\int_\S u+C;
\end{eqnarray*}
namely, $\mathcal J_{\rho,\rho'}\ge-\frac{C}2$.

\end{proof}

\

We have a characterization of the values for which $\mathcal J$ is \emph{super--quadratic}, hence coercive. See also \cite{jostwang} (Lemmas 4.2, 4.3), \cite{lucandrea} (Lemmas 3.4, 3.5), \cite{lucamt} (Lemmas 4.3, 4.4).

\begin{lemma}\label{aperto}
Define $\L$ as the set of parameters for which $\mathcal J$ is bounded from below, namely

\beq\label{lambda}
\L:=\left\{(\rho,\rho')\in\R^2:\,\mathcal J_{\rho,\rho'}\text{ is bounded from below}\right\}.
\eeq

Then, $(\rho,\rho')$ belongs to the interior $\mathring\L$ of $\L$ if and only if there exists $C>0$ such that

\beq\label{coerc}
\mathcal J_{\rho,\rho'}(u)\ge\frac1C\int_\S|\n u|^2-C\quad\quad\quad\forall u\in H^1(\S).
\eeq

In particular, the set of parameters for which $\mathcal J$ is coercive is open and coincides with $\mathring\L$.
\end{lemma}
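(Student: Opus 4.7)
The starting observation is that for each fixed $u\in H^1(\S)$ the map $(\rho,\rho')\mapsto\mathcal J_{\rho,\rho'}(u)-\frac12\int_\S|\n u|^2$ is linear, so whenever $(\rho,\rho')=t(\rho_1,\rho_1')+(1-t)(\rho_2,\rho_2')$ with $t\in[0,1]$ one has $\mathcal J_{\rho,\rho'}=t\mathcal J_{\rho_1,\rho_1'}+(1-t)\mathcal J_{\rho_2,\rho_2'}$ pointwise. This makes $\L$ convex and, since $\mathcal J_{0,0}(u)=\frac12\int_\S|\n u|^2\ge 0$, puts $(0,0)\in\L$. I plan to exploit this affine structure to obtain \eqref{coerc} by a convex-combination argument with the origin, and to establish the converse by a perturbative estimate based on the Moser--Trudinger inequalities of the previous section.

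For the direction $\mathring\L\Rightarrow\eqref{coerc}$, I would first dispatch the trivial case $(\rho,\rho')=(0,0)$. Otherwise, openness of $\mathring\L$ implies that a small ball around $(\rho,\rho')$ is contained in $\L$; since this ball meets the ray from the origin through $(\rho,\rho')$ in an open segment having $(\rho,\rho')$ in its interior, there exists $\lambda>1$ with $\lambda(\rho,\rho')\in\L$, so $\mathcal J_{\lambda\rho,\lambda\rho'}\ge-C_\lambda$. Writing $(\rho,\rho')$ as the convex combination $\frac1\lambda(\lambda\rho,\lambda\rho')+(1-\frac1\lambda)(0,0)$ and applying the pointwise affine decomposition yields
\[
\mathcal J_{\rho,\rho'}(u)=\frac1\lambda\mathcal J_{\lambda\rho,\lambda\rho'}(u)+\Bigl(1-\frac1\lambda\Bigr)\frac12\int_\S|\n u|^2\ge\frac{\lambda-1}{2\lambda}\int_\S|\n u|^2-\frac{C_\lambda}{\lambda},
\]
which is precisely \eqref{coerc}.

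For the converse $\eqref{coerc}\Rightarrow\mathring\L$, I would perturb at $(\rho,\rho')$. Writing
\[
\mathcal J_{\rho+\varepsilon,\rho'+\varepsilon'}(u)-\mathcal J_{\rho,\rho'}(u)=\varepsilon f(u)+\varepsilon' g(u),
\]
with $f(u):=\frac2{|\S|}\int_\S u-2\log\int_\S Ke^u$ and $g(u):=\frac2{|\S|}\int_\S u-4\log\int_{\de\S}he^{u/2}$, the difference is invariant under adding a constant to $u$, so one may normalize by $\int_\S u=0$. Jensen's inequality then produces upper bounds $f(u),g(u)\le C$, while the Moser--Trudinger inequalities \eqref{chygineq} and \eqref{liliuineq}---combined with the trace inequality and Young's inequality to handle the term $\int_{\de\S}u$ that is not killed by the normalization---give lower bounds $f(u),g(u)\ge-C_0-C_1\int_\S|\n u|^2$. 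Substituting into \eqref{coerc} and choosing $|\varepsilon|+|\varepsilon'|$ so small that $\frac1C-C_1(|\varepsilon|+|\varepsilon'|)>0$ delivers a uniform lower bound for $\mathcal J_{\rho+\varepsilon,\rho'+\varepsilon'}$, placing the entire small ball in $\L$.

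The ``In particular'' claim is then immediate: in view of Poincar\'e's inequality on $\overline H^1(\S)$, condition \eqref{coerc} is equivalent to coercivity of $\mathcal J_{\rho,\rho'}$, so the coercive set coincides with the set where \eqref{coerc} holds, which equals $\mathring\L$ and is thus open. The main obstacle I anticipate is the lower bound on $g(u)$ under the normalization $\int_\S u=0$: the boundary mean $\int_{\de\S}u$ need not vanish, and absorbing it into a small multiple of $\int_\S|\n u|^2$ requires a careful application of the trace estimate together with Young's inequality, so that the resulting constant $C_1$ remains small enough for the perturbation argument to close.
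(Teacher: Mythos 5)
Your proof is correct, and while the first implication coincides with the paper's, the converse is argued along a genuinely different route. For $\mathring\L\Rightarrow\eqref{coerc}$ your convex combination with the origin is just a rephrasing of the paper's identity $\mathcal J_{(1+\d)\rho,(1+\d)\rho'}(u)=(1+\d)\mathcal J_{\rho,\rho'}(u)-\frac\d2\int_\S|\n u|^2$, so there is nothing new there. For $\eqref{coerc}\Rightarrow\mathring\L$ the paper instead combines the same scaling identity with a Jensen--based monotonicity: $(\rho,\rho')\in\L$ implies $(\widetilde\rho,\widetilde\rho')\in\L$ whenever $\widetilde\rho\le\rho$ and $\widetilde\rho'\le\rho'$ (the $\rho'$--direction via the auxiliary function $\vfi$ of \eqref{phi}), so that the single point $(1+\d)(\rho,\rho')\in\L$ generates a whole quadrant of $\L$. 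That route uses only Jensen's inequality, but the quadrant below $(1+\d)(\rho,\rho')$ is a neighbourhood of $(\rho,\rho')$ only when both parameters are positive; your perturbation argument, which controls the affine increments $\e f(u)+\e'g(u)$ by two--sided bounds $|f|,|g|\le C_0+C_1\int_\S|\n u|^2$ (Jensen on one side, \eqref{chygineq}, \eqref{liliuineq} plus trace and Young on the other), works uniformly in all sign configurations, at the price of invoking the Moser--Trudinger inequalities already at this stage. One small correction to your own accounting: the trace--plus--Young step is needed not only for the lower bound on $g$ but also for its Jensen upper bound, since after normalizing $\int_\S u=0$ the term $\frac1{|\de\S|}\int_{\de\S}u$ survives; the resulting bound $g(u)\le\e\int_\S|\n u|^2+C_\e$ is still perfectly adequate for closing the perturbation, so this does not affect the validity of the argument.
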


\begin{proof}
First of all, due to Jensen inequality one has

\beq\label{jensen}
\log\int_\S e^u\ge\frac1{|\S|}\int_\S u+\log|\S|\quad\quad\quad\quad\quad\quad\log\int_{\de\S}e^\frac u2\ge\frac1{|\de\S|}\int_{\de\S}u+\log|\de\S|.
\eeq

Therefore, one sees that if $(\rho,\rho')\in \L$ then $\left(\widetilde\rho,\rho'\right)\in\L$ for any $\widetilde\rho\le\rho$. Moreover, one can argue as in the proof of Lemma \ref{interpol} and compute $\mathcal J_{\rho,\rho'}$ on $u$ plus a suitable multiple of the solution $\vfi$ to \eqref{phi}, in order to switch the mean value on $\S$ and the mean value on $\de\S$; using this and the second inequality in \eqref{jensen}, one gets that $\left(\widetilde\rho,\widetilde\rho'\right)\in\L$ if $\widetilde\rho'\le\rho'$.

\

We are left with showing that $(1+\d)(\rho,\rho')\in\L$ if and only if \eqref{coerc} holds.

This will follow by writing

$$\mathcal J_{(1+\d)\rho,(1+\d)\rho'}(u)=(1+\d)\mathcal J_{\rho,\rho'}(u)-\frac\d2\int_\S|\n u|^2;$$
in fact, this implies that $\mathcal J_{(1+\d),(1+\d)\rho'}\ge-C$ if and only if $\mathcal J_{\rho,\rho'}(u)\ge\frac{\d}{2(1+\d)}\int_\S|\n u|^2$.

\end{proof}

\

Following the approach of \cite{jostwang}, we introduce an auxiliary perturbation of $\mathcal J_{\rho,\rho'}$ in the case of non-coercive parameters. This is particularly useful because the results concerning the blow--up can be applied to its critical points.

\begin{lemma}\label{perturb}
Assume that for $(\rho_0,\rho'_0)$ there exists a sequence $u_n\in H^1(\S)$ such that

\beq\label{noncoerc}
\int_\S|\n u_n|^2\underset{n\to+\infty}\to+\infty,\quad\quad\quad\quad\quad\quad\liminf_{n\to+\infty}\frac{\mathcal J_{\rho_0,\rho'_0}(u_n)}{\int_\S|\n u_n|^2}\leq 0.
\eeq

Then, there exists a smooth $F:H^1(\S)\to\R$ and such that, up to subsequences,

\beq\label{f}
0<F'(u)<1,\quad\quad\quad F'(u)\underset{\int_\S|\n u|^2\to+\infty}\to0\quad\text{uniformly}\quad\quad\quad\inf_{H^1(\S)}\left(\mathcal J_{\rho_0,\rho'_0}-F\right)=-\infty.
\eeq

\end{lemma}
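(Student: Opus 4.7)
The plan is to construct $F$ as a smooth concave function of the Dirichlet energy, calibrated so that it grows faster than $\mathcal J_{\rho_0,\rho'_0}$ along the non-coercivity witness $\{u_n\}$. Specifically, I will produce a smooth, strictly increasing, strictly concave $f:[0,+\infty)\to[0,+\infty)$ with $f(0)=0$, $0<f'(t)<1$ for all $t\ge 0$, and $f'(t)\to 0$ as $t\to+\infty$, and set $F(u):=f\!\left(\int_\S|\n u|^2\right)$. Since $F$ depends on $u$ only through the scalar $\int_\S|\n u|^2$, its derivative factorises through $f'(\int_\S|\n u|^2)$; interpreting $F'(u)$ as this scalar multiplier, the first two requirements in \eqref{f}, namely $0<F'(u)<1$ and uniform decay as $\int_\S|\n u|^2\to +\infty$, are then automatic.

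To arrange the third requirement I extract a rate from \eqref{noncoerc}: up to a subsequence $t_n:=\int_\S|\n u_n|^2\to+\infty$ and $\mathcal J_{\rho_0,\rho'_0}(u_n)\le \e_n t_n$ with $\e_n\to 0$. Passing to a further subsequence I may also assume $t_{n+1}\ge 2 t_n$ and $|\e_n|\le 2^{-n}$. Fix any $\d_n\downarrow 0$ with $\d_n t_n\to +\infty$ (for instance $\d_n:=(\log t_n)^{-1}$). The problem reduces to producing $f$ with the stated global behaviour together with the pointwise lower bound $f(t_n)\ge (\e_n+\d_n)t_n$, since then
$(\mathcal J_{\rho_0,\rho'_0}-F)(u_n)\le \e_n t_n-f(t_n)\le -\d_n t_n\to -\infty$,
which forces $\inf_{H^1(\S)}(\mathcal J_{\rho_0,\rho'_0}-F)=-\infty$, i.e.\ the last condition in \eqref{f}.

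The function $f$ is built as a smooth concavification of the piecewise-affine interpolant of the points $\{(0,0)\}\cup\{(t_n,(\e_n+\d_n)t_n)\}_n$. The slope of the segment joining consecutive points is of order $\e_n+\d_n$ (plus lower order contributions absorbed by the geometric growth $t_{n+1}\ge 2t_n$), hence it is strictly decreasing and tends to $0$; after capping the very first slope below $1/2$ on a bounded interval and mollifying, we obtain a smooth strictly concave $f$ meeting every requirement. The only real obstacle is this one-variable construction, where one must reconcile concavity, the strict bound $f'<1$, the decay $f'\to 0$, and the infinitely many pointwise lower bounds along the arbitrary sequence $(t_n,\e_n)$; nothing is deep, but some care is needed to ensure that the concave envelope stays above the prescribed data and that the initial mollification does not destroy the pointwise inequalities at large $n$.
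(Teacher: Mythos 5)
Your proposal is correct and follows essentially the same route as the paper: the paper sets $F(u)=f\left(\int_\S|\n u|^2\right)$ and invokes the elementary calculus lemma of Jost--Wang (their Lemma 4.4) to produce the one-variable function $f$, which is exactly the reduction you perform. The only difference is that you additionally supply a proof of that calculus lemma via a concave-envelope interpolation, and the details you flag (capping the initial slope, mollifying without losing the pointwise bounds) all go through.
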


\begin{proof}
An elementary calculus lemma (see for instance \cite{jostwang}, Lemma 4.4) states the following: for any two real sequences $\{a_n\},\{b_n\}$ satisfying

$$a_n\underset{n\to+\infty}\to+\infty,\quad\quad\quad\quad\quad\quad\liminf_{n\to+\infty}\frac{b_n}{a_n}\le0,$$
there exists a smooth $f:[0,+\infty)\to\R$ such that

$$0<f'(t)<1,\quad\quad\quad\quad\quad\quad f'(t)\underset{t\to+\infty}\to0,\quad\quad\quad\quad\quad\quad f(a_n)-b_n\underset{n\to+\infty}\to+\infty.$$

Apply such a result to $a_n:=\int_\S|\n u_n|^2$, $b_n:=\mathcal J_{\rho_0,\rho'_0}(u_n)$; then, $F(u):=f\left(\int_\S|\n u|^2\right)$ will satisfy the required properties.
\end{proof}

\

To extend the inequality to a wider range of parameters, we perform a blow--up analysis using results from the previous section.

\begin{proof}[Proof of (1) in Theorem \ref{mtteo}]
Suppose, by contradiction, that $\mathcal J_{\rho,\rho'}$ is not coercive for some $(\rho,\rho')$ satisfying $\rho<4\pi$, $\rho+\rho'<2\pi$.

In view of Lemma \ref{aperto}, the space of coercive parameters is given by $\mathring\L$, with $\L$ given by \eqref{lambda}, and it is not empty because of Lemma \ref{interpol}. Therefore there will be some $(\rho_0,\rho'_0)\in\de\L$ with $\rho_0<4\pi$ and $\rho_0+\rho'_0<2\pi$ due to Proposition \ref{nonmt}.

From Lemma \ref{aperto} we get that \eqref{coerc} does not hold true for $(\rho_0,\rho'_0)$, therefore $(\rho_0,\rho'_0)$ will satisfy \eqref{noncoerc} and we may apply Lemma \ref{perturb}.

Now, let us take $(\rho_n,\rho'_n)\in\mathring\L$ satisfying $(\rho_n,\rho'_n)\underset{n\to+\infty}\to(\rho_0,\rho'_0)$ and let us consider $\mathcal J_{\rho_n,\rho'_n}-F$. In view of \eqref{coerc} and the second assumption of \eqref{f}, the new functional will be coercive for any $(\rho_n,\rho'_n)$, as $\mathcal J$ is. Therefore, a sequence $u_n$ of minimizers satisfying

$$\left\{\begin{array}{ll}
-(1+F'(u_n))\Delta u_n+\frac{2(\rho_n+\rho'_n)}{|\S|}= 2\rho_n\frac{Ke^{u_n}}{\int_\S Ke^{u_n}}&\text{in }\S,\vspace{.3cm}\\
(1+F'(u_n))\de_\nu u_n=2\rho'_n\frac{he^\frac{u_n}2}{\int_{\de\S}he^\frac{u_n}2}&\text{on }\de\S.
\end{array}\right.$$

Since $1+F'(u_n)\underset{n\to+\infty}\to1$ uniformly, we are in a position to apply Theorem \ref{compa}. To get a contradiction and conclude the proof we suffice to exclude both alternatives in the theorem.

Alternative $(2)$ cannot hold true, because $(\rho_0,\rho'_0)$ does not belong to the critical set $\G$ defined in \eqref{critic}. On the other hand, if alternative $(1)$ held true, then $u_n\underset{n\to+\infty}\to u_0$ in $H^1(\S)$, with $u_0$ being a minimizer of $\mathcal J_{\rho_0,\rho'_0}-F$. But $\mathcal J_{\rho_0,\rho'_0}-F$ is unbounded from below by construction, therefore this is impossible.

We found the desired contradiction and therefore completed the proof.

\end{proof}

\

Finally, we can obtain a sharp inequality by performing a more accurate blow--up analysis.

\begin{proof}[Proof of (2) in Theorem \ref{mtteo}]
Fix $\rho'\in(-\infty,-2\pi)$ and take $\rho_n\underset{n\to+\infty}\nearrow4\pi$.

By the first part of Theorem \ref{mtteo}, $\mathcal J_{\rho_n,\rho'}$ is coercive, hence it admits a sequence of minimizers $u_n$. By writing, for any $u\in H^1(\S)$,

$$\mathcal J_{4\pi,\rho'}(u)=\lim_{n\to+\infty}\mathcal J_{\rho_n,\rho'}(u)\ge\lim_{n\to+\infty}\mathcal J_{\rho_n,\rho'}(u_n),$$
we see that we suffice to prove that the latter sequence is uniformly bounded from below.

To this purpose, we apply blow--up analysis from Theorem \ref{compa} to $u_n$ (to which in case we may add a suitable constant to verify \eqref{massmass}, which does not alter the values of $\mathcal J$).

In case alternative $(1)$ hold, then $\mathcal J_{\rho_n,\rho'}(u_n)$ is uniformly bounded because $u_n$ is compact in $H^1(\S)$. On the other hand, if blow--up occurs, then the blow--up set $\mathcal S$ is disjointed with $\de\S$, otherwise we would have

$$4\pi\#(\mathcal S\cap\de\S)\underset{n\to+\infty}\leftarrow2(\rho_n+\rho')\le8\pi+2\rho'<4\pi.$$

Therefore, $u_n$ is bounded from above on $\de\S$, hence one may use inequality \eqref{chygineq} to get

$$\mathcal J_{\rho_n,\rho'}(u_n)\ge\int_\S\left(\frac12|\n u_n|^2+\frac{2\rho_n}{|\S|}u_n\right)-2\rho_n\log\int_\S e^{u_n}-C\ge-C.$$

\

Now, fix $\rho\in(-\infty,4\pi)\setminus\{0,2\pi\}$ and take $(\rho_n,\rho'_n)\underset{n\to+\infty}\nearrow(\rho,2\pi-\rho)$.

As in the previous case, we will suffice to show $\mathcal J_{\rho_n,\rho'_n}(u_n)\ge-C$ for some sequence $u_n$ of minimizers to $\mathcal J_{\rho_n,\rho'_n}$. Again, we apply Theorem \ref{compa} to $u_n$.

If the first alternative occurs in the theorem, then boundedness of energy is immediate. On the other hand, if $u_n$ blows up, then $\mathcal S\cap\mathring\S=\emptyset$, since otherwise we would get $2\rho_n\ge8\pi$, therefore $\mathcal S\cap\de\S\ne\emptyset$.

We are then in position to apply Proposition \ref{quasisharp} to get a mutual control between the two nonlinear term in $\mathcal J$. From this, we obtain:

$$\mathcal J_{\rho_n,\rho'}(u_n)\ge\int_\S\left(\frac12|\n u_n|^2+\frac{2(\rho_n+\rho'_n)}{|\S|}u_n\right)-4(\rho_n+\rho'_n)\log\int_{\de\S}e^\frac{u_n}2-C.$$

Now, we can pass from the mean values on $\S$ to the ones on $\de\S$ by subtracting from $u$ the solution $\vfi$ to \eqref{phi}, as in the proof of Lemma \ref{interpol}; then, one concludes by applying inequality \eqref{liliuineq}.
\end{proof}

\

\begin{remark}
The only case which is not covered by Theorem \ref{mtteo} is $(\rho,\rho')=(4\pi,-2\pi)$: we showed that $\mathcal J_{4\pi,-2\pi}$ is not coercive, but not whether it is bounded from below.

In fact, the proof of the theorem relies on Proposition \ref{quasisharp}, which does not hold for this case, as pointed out in Remark \ref{remark}.
\end{remark}

\

\section{Existence of min--max solutions}
\setcounter{equation}0

In this subsection we prove Theorem \ref{eximinmax} in the cases where the energy functional is not bounded from below, namely where $N,M$ satisfy \eqref{nm}.

The argument consists in \emph{comparing} very low energy sublevels $\mathcal J^{-L}$, defined in \eqref{jl}, with some spaces of barycenters, whose homology is well--known. In particular, the homology groups of $\mathcal J^{-L}$ will contain a copy of the homology groups of such barycenters, which under some assumptions is not trivial. Therefore, low energy sublevels are not contractible and existence of solutions will follow.

\

The barycenters on a metric space $\M$ is given by finitely--supported probability measures on $\M$, namely convex combinations of Dirac deltas, equipped with the $\mathrm{Lip'}$ topology:

$$(\M)_K=\left\{\sum_{i=1}^Kt_i\d_{x_i}:\,\sum_{i=1}^Kt_i=1,\,x_i\in\M\right\}\quad\quad\quad\|\mu\|_{\mathrm{Lip}'(\M)}:=\sup_{\vfi\in\mathrm{Lip}(\M),\|\vfi\|_{\mathrm{Lip}(M)}\le1}\left|\int_\M\vfi\mathrm d\mu\right|.$$

We will take as $\M$ either $\de\S$ or a compact subset $\widetilde\S\Subset\S$ well--separated from the boundary, namely

\beq\label{sigmaw}
\widetilde\S:=\S\setminus(\de\S)^\d=\left\{x\in\S:\,\dist(x,\de\S)\ge\d\right\},
\eeq
with $\d>0$ small enough so that $(\de\S)^\d$ it is a deformation retract of $\de\S$ and $\widetilde\S$ is a deformation retract of $\S$. In particular, taking $M,N$ as in the statement of Theorem \ref{eximinmax}, we will consider the space $\mathcal X$ defined by

\beq\label{x}
\boxed{\mathcal X:=\left\{\begin{array}{ll}\left(\widetilde\S\right)_N&\text{if }N\ge M,\vspace{.3cm}\\(\de\S)_M&\text{if }N<M.\end{array}\right.}
\eeq

Roughly speaking, if $N$, is greater, then the interior of $\S$ plays a more important role than the boundary. This is consistent with the blow--up analysis from Theorem \ref{compa}, where interior blow--up depends on $\rho$ and boundary blow--up depends on the sum $\rho+\rho'$.

In the case $N<M$ we always get a noncontractible space, as $\de\S$ is always noncontractible. If $N\ge M$, $\mathcal X$ is contractible if $\S$ is, therefore in this last case we do not get existence of solutions, as shown by Figure 3.

\begin{figure}[h!]
\centering
\includegraphics[width=.5\linewidth]{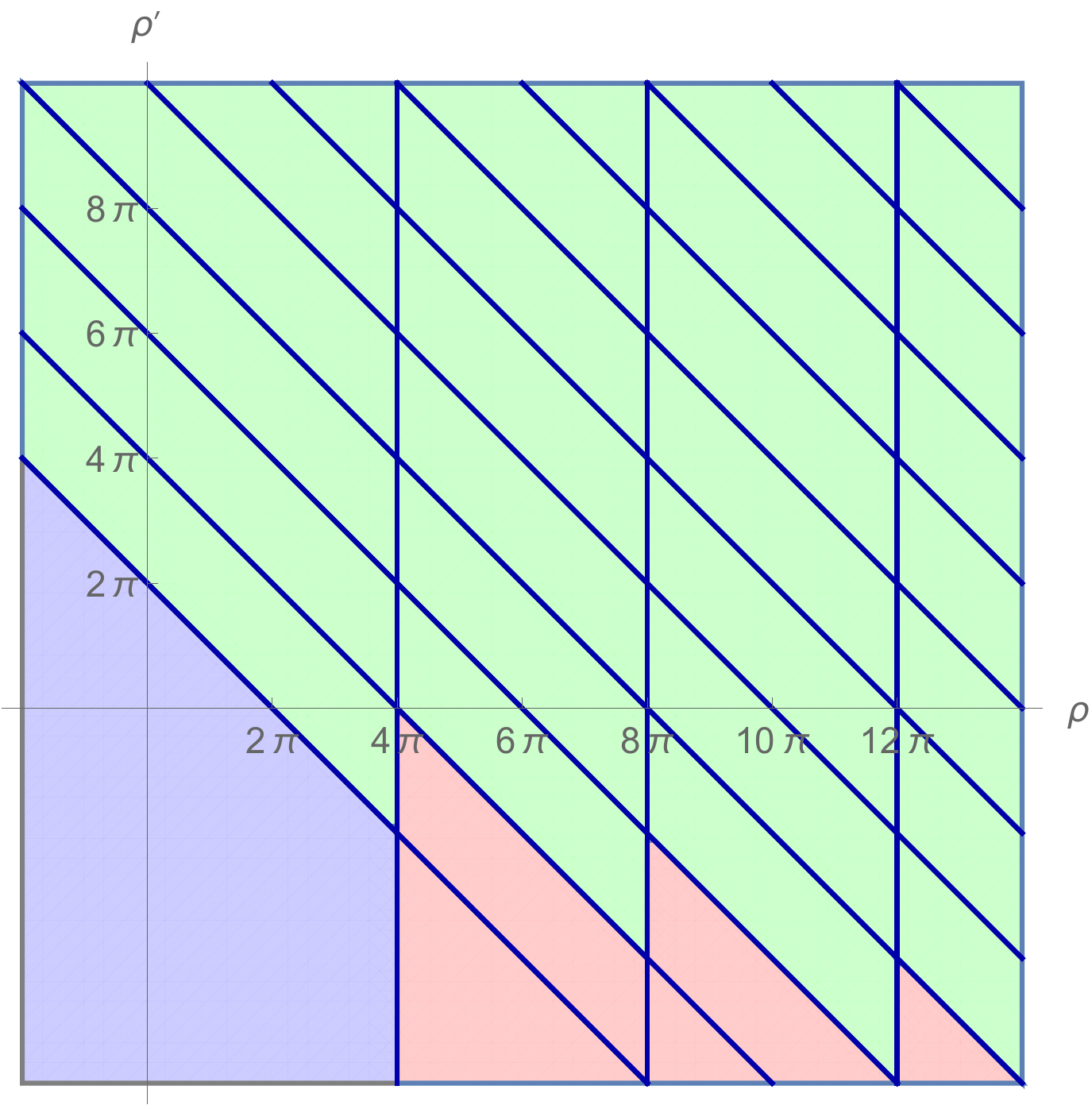}
\caption{The regions where \eqref{meanfield} has solutions {\color{blue}on} {\color[rgb]{0,.5,0}any} $\S$ or on {\color{red}multiply connected} $\S$}
\end{figure}

\begin{proposition}\label{phipsi}
Let $\mathcal J^{-L}$ be defined by \eqref{jl} and $\mathcal X$ be defined by \eqref{x}. Then, for $L\gg0$ large enough there exist two maps

$$\Phi:\mathcal X\to\mathcal J^{-L},\quad\quad\quad\quad\quad\quad\Psi:\mathcal J^{-L}\to\mathcal X,$$
such that $\Psi\circ\Phi:\mathcal X\to\mathcal X$ is homotopically equivalent to the identity map.
\end{proposition}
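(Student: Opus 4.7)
The strategy follows the now classical scheme going back to \cite{dm}, adapted to the interior/boundary dichotomy.

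\textbf{Construction of }$\Phi$. Given a barycenter $\sigma=\sum_{i=1}^kt_i\delta_{x_i}\in\mathcal X$ (with $k=N$ or $k=M$ depending on the case), I would use the standard convex-combination test functions
$$\phi_{\sigma,\lambda}(x):=\log\sum_{i=1}^k t_i\left(\frac{\lambda}{1+\lambda^2\dist^2(x,x_i)}\right)^2,\qquad\lambda\gg 0.$$
Suppose first $N\ge M$, so $x_i\in\widetilde\Sigma\Subset\mathring\Sigma$. Adapting the one-bubble estimates already recorded in Proposition \ref{nonmt} to the $N$-point case (the supports of the individual bubbles being asymptotically disjoint once $\lambda$ is large), one obtains
$$\tfrac12\int_\Sigma|\nabla\phi_{\sigma,\lambda}|^2=16\pi N\log\lambda+O(1),\quad\int_\Sigma\phi_{\sigma,\lambda}=-2|\Sigma|\log\lambda+O(1),$$
$$\log\int_\Sigma Ke^{\phi_{\sigma,\lambda}}=O(1),\quad\log\int_{\de\Sigma}he^{\phi_{\sigma,\lambda}/2}=-\log\lambda+O(1),$$
so that
$$\mathcal J_{\rho,\rho'}(\phi_{\sigma,\lambda})\le(16\pi N-4\rho)\log\lambda+O(1)\underset{\lambda\to+\infty}\longrightarrow-\infty$$
thanks to $\rho>4\pi N$. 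In the case $N<M$ the points $x_i$ are taken on $\partial\Sigma$ and the analogous bubble estimates (halved gradient energy, bounded boundary mass) produce
$$\mathcal J_{\rho,\rho'}(\phi_{\sigma,\lambda})\le(8\pi M-4(\rho+\rho'))\log\lambda+O(1)\underset{\lambda\to+\infty}\longrightarrow-\infty$$
thanks to $\rho+\rho'>2\pi M$. In either regime, fixing $\lambda=\lambda(L)$ sufficiently large yields $\Phi(\sigma):=\phi_{\sigma,\lambda(L)}\in\mathcal J^{-L}$, and the continuity of $(t_i,x_i)\mapsto\phi_{\sigma,\lambda(L)}\in H^1(\Sigma)$ descends to a continuous map $\Phi:\mathcal X\to\mathcal J^{-L}$.

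\textbf{Construction of }$\Psi$. Here I would invoke a localized Moser--Trudinger improvement in the style of Proposition \ref{prop0}, combined with the topological retraction of Proposition \ref{propretract}. Roughly, one shows that if $\mathcal J_{\rho,\rho'}(u)\le-L$ with $L$ large, then at least one of the probability measures
$$\mu_u:=\frac{Ke^u}{\int_\Sigma Ke^u}\quad\text{on }\Sigma,\qquad\mu'_u:=\frac{he^{u/2}}{\int_{\de\Sigma}he^{u/2}}\quad\text{on }\de\Sigma$$
must be $\e$-concentrated (for $\e=\e(L)\to0$) around a configuration with at most $N$ points in $\mathring\Sigma$, respectively at most $M$ points on $\de\Sigma$, with the dichotomy governed by which of $\rho>4\pi N$ or $\rho+\rho'>2\pi M$ forces the larger deficit. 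Applying the composed retraction of Proposition \ref{propretract} to the distinguished measure, one lands in the target barycenter space: $(\widetilde\Sigma)_N$ if $N\ge M$, $(\de\Sigma)_M$ if $N<M$; this provides the continuous $\Psi:\mathcal J^{-L}\to\mathcal X$.

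\textbf{Homotopy} $\Psi\circ\Phi\simeq\mathrm{id}_\mathcal{X}$. By construction $\mu_{\Phi(\sigma)}\rightharpoonup\sigma$ in the $\mathrm{Lip}'$ sense as $\lambda(L)\to+\infty$, and the same for the boundary measure when $N<M$. Consequently $\mathrm{dist}_{\mathrm{Lip}'}(\Psi(\Phi(\sigma)),\sigma)\to0$ uniformly in $\sigma\in\mathcal X$ as $L\to+\infty$. Since $\mathcal X$ is the image of the (contractible) probability simplex into $\mathrm{Lip}'(\M)$, for $L$ sufficiently large the standard linear homotopy
$$H_s(\sigma):=(1-s)\,\sigma+s\,\Psi(\Phi(\sigma)),\qquad s\in[0,1],$$
stays inside a tubular neighborhood of $\mathcal X$ on which the barycenter-retraction of Proposition \ref{propretract} is defined, and composing with that retraction yields a continuous homotopy in $\mathcal X$ joining $\mathrm{id}$ to $\Psi\circ\Phi$.

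The main obstacle is the sharp localized inequality underpinning $\Psi$: one needs to rule out, in the sublevel $\mathcal J^{-L}$, the possibility that interior and boundary concentrations \emph{compete} in a non-discriminable way. This forces a careful splitting of $\Sigma$ into neighborhoods of the presumptive concentration points and their complements, on which \eqref{chygineq} and \eqref{liliuineq} are applied with optimal constants, and on which the interplay between $\rho$ and $\rho+\rho'$ selects the correct alternative $N\ge M$ versus $N<M$. This is precisely where the asymmetry between interior and boundary terms in \eqref{meanfield}, already highlighted in Proposition \ref{quasisharp}, enters in an essential way.
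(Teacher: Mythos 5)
Your construction of $\Phi$ coincides with the paper's (Lemma \ref{test}), and your homotopy at the end is a legitimate variant: the paper reparametrizes $\lambda\to+\infty$ inside $\Psi\circ\Phi^\lambda$ rather than linearly interpolating measures and re-retracting, but both work once one knows $\mu_{\Phi(\sigma)}\weakto\sigma$ uniformly and that the retraction fixes $\mathcal X$. The genuine gap is in $\Psi$. The concentration statement you propose --- a dichotomy in which either $\mu_u$ concentrates at $\le N$ points of $\mathring\S$ or $\mu'_u$ concentrates at $\le M$ points of $\de\S$ --- is not what the improved Moser--Trudinger machinery yields, and it is not even correct as stated: a function $u$ with $\mathcal J(u)\le-L$ may perfectly well concentrate at boundary points in the case $N\ge M$ (the stratum $(\S)_{0,K}$ with $K\le N$ is admissible), and at interior points in the case $N<M$. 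What Lemma \ref{lemma2} actually proves, via Lemma \ref{mtimproved} and Corollary \ref{mtimpr2}, is that the \emph{single} measure $Ke^u/\int_\S Ke^u$ is $\mathrm{Lip}'$-close to the mixed barycenter space $\mathcal X'$ of \eqref{xprim}, whose strata $(\S)_{J,K}$ combine $J$ interior and $K$ boundary points subject to the asymmetric constraint ``$J+K\le N$ \emph{or} $2J+K\le M$''; the factor $2$ in front of $J$ (an interior bubble costs $4\pi$ of interior mass but contributes twice as much to the $\rho+\rho'$ threshold as a boundary bubble) is exactly the interplay your sketch declares essential but never pins down. Your attribution of the localized inequality to Proposition \ref{prop0} also points at the wrong tool: Proposition \ref{prop0} only controls the boundary term by the interior one; the improvement comes from Propositions \ref{mt1l}--\ref{mt2l}.

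A second, related problem is continuity. Even granting your dichotomy, $\Psi$ would be defined by \emph{choosing} one of the two alternatives for each $u$, and nothing in your sketch prevents this choice from jumping discontinuously on the overlap where both (or neither cleanly) hold. The paper sidesteps this entirely: there is one measure, one neighborhood retraction $\psi$ onto the single space $\mathcal X'$, and then the explicit continuous retraction $\Pi:\mathcal X'\to\mathcal X$ of Proposition \ref{propretract} (which in the case $N<M$ must send interior concentration points to convex combinations of boundary deltas --- a construction your argument never invokes, though it is indispensable there). Without identifying $\mathcal X'$ and the map $\Pi$ as the bridge, the definition of a continuous $\Psi:\mathcal J^{-L}\to\mathcal X$ does not go through.
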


From this it is standard to deduce Theorem \ref{eximinmax}, using compactness results from Theorem \ref{compa} and a deformation lemma from \cite{luc}.

\begin{proof}[Proof of Theorem \ref{eximinmax}]
From Proposition \ref{phipsi} and the properties of homology one gets $H_*(\mathcal X)\hookrightarrow H_*\left(\mathcal J^{-L}\right)$. By assuming either $N<M$ or $\S$ being multiply connected we get that $\mathcal X$ has some nontrivial homology groups (see \cite{lucaks}, Lemma 3.3 in the former case and \cite{bdm}, Proposition 3.2 in the latter case); therefore, $\mathcal J^{-L}$ also has non-trivial homology, hence is not contractible.

On the other hand, since $(\rho,\rho')\notin\G$, then Corollary \ref{notingamma} holds true. Therefore, one can argue a deformation argument as in \cite{luc}, Proposition 1.1 (see also \cite{str-mon}) to deduce that $\mathcal J^L$ is a deformation retract of the whole space $H^1(\S)$ if $L\gg0$ is large enough, hence it is contractible.

Therefore, $\mathcal J^L$ and $\mathcal J^{-L}$ are not homotopically equivalent. Arguing again like in \cite{luc}, Proposition 1.1, there must be a sequence $u_n$ of solutions to \eqref{meanfieldn} with $K=K_n,h=h_n$ satisfying $-L\le\mathcal J_{\rho_n,\rho'_n}(u_n)\le L$ and $(\rho_n,\rho'_n)\underset{n\to+\infty}\to(\rho,\rho')$. By Theorem \ref{compa} and the assumptions on $\rho,\rho'$, we have $u_n\underset{n\to+\infty}\to u$, the latter being a solution to \eqref{meanfield}.
\end{proof}

\begin{remark} One can similarly deduce multiplicity of solutions to \eqref{meanfield} via Morse theory, as the homology of $\mathcal X$ has been explicitly computed in \cite{bdm,dmls,lucaks} and in particular

\begin{eqnarray*}
N\ge M\quad&\Rightarrow&\quad H_q(\mathcal X)=\left\{\begin{array}{ll}\Z^{N+g-1\choose g-1}&\text{if }q=2N-1,\vspace{.3cm}\\0&\text{if }q\ne2N-1;\vspace{.3cm}\end{array}\right.\\
N<M\quad&\Rightarrow&\quad H_q(\mathcal X)=\left\{\begin{array}{ll}\Z^{{q-M+g+1\choose g}{g\choose2M-q-1}}&\text{if }\max\{M-1,2M-g-1\}\le q\le2M-1,\vspace{.3cm}\\0&\text{if }q<\max\{M-1,2M-g-1\}\text{ or }q>2M-1;\end{array}\right.
\end{eqnarray*}
where $g$ is the genus of $\S$.

Multiplicity will hold true only if $\mathcal J$ is a Morse function; however, one can prove that this holds true for a \emph{generic} choice of the potentials $K,h$ and the metric $g$. For further details see \cite{fra,bdm}.
\end{remark}

\

The rest of this section will be devoted to the proof of Proposition \ref{phipsi}.

We start with the construction of the map $\Phi$, consisting in a family of test function modeled on $\mathcal X$ whose energy is arbitrarily low; this is is a generalization of the construction made in Proposition \ref{nonmt}. Since such test functions are very well--studied in Liouville--type problems, the proof will be sketchy.

\begin{lemma}\label{test}
For any $\l>0$ define $\Phi^\l:\mathcal X\to H^1(\S)$ as

\begin{eqnarray*}
\mathcal X&\overset{\Phi^\l}\longrightarrow&H^1(\S)\\
\xi=\sum_it_i\d_{x_i}&\mapsto&\phi^\l_\xi(x)=\log\sum_it_i\left(\frac{\l}{1+\l^2\dist^2(x,x_i)}\right)^2.
\end{eqnarray*}

Then, $\mathcal J_{\rho,\rho'}\left(\phi^\l_\xi\right)\underset{\l\to+\infty}\to-\infty$ uniformly on $\mathcal X$.
\end{lemma}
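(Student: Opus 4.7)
The plan is to expand each of the four pieces of $\mathcal J_{\rho,\rho'}(\phi^\l_\xi)$ as $(\text{leading coefficient})\cdot\log\l+O(1)$, with the $O(1)$ uniform in $\xi\in\mathcal X$, and then to check that the resulting leading coefficient is strictly negative under the standing assumption that $N,M$ are as in \eqref{nm} with $(N,M)\ne(0,0)$.

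First I would recall, or redo in a conformal chart, the single--bubble asymptotics already invoked in the proof of Proposition~\ref{nonmt}: with $v_{p,\l}(x):=\log\bigl(\l/(1+\l^2\dist^2(x,p))\bigr)^2$ one has, as $\l\to+\infty$,
\begin{align*}
\int_\S|\n v_{p,\l}|^2&=\begin{cases}32\pi\log\l+O(1)&\text{if }p\in\mathring\S,\\ 16\pi\log\l+O(1)&\text{if }p\in\de\S,\end{cases}\\
\int_\S v_{p,\l}&=-2|\S|\log\l+O(1),\\
\log\int_\S Ke^{v_{p,\l}}&=O(1),\\
\log\int_{\de\S}he^{v_{p,\l}/2}&=\begin{cases}-\log\l+O(1)&\text{if }p\in\mathring\S,\\ O(1)&\text{if }p\in\de\S.\end{cases}
\end{align*}
Then I would lift these to the convex combination $\phi^\l_\xi$. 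Setting $\delta_i(x):=\l/(1+\l^2\dist^2(x,x_i))$ and writing $\n\phi^\l_\xi=\sum_iw_i\n\log\delta_i^2$ with the probability weights $w_i=t_i\delta_i^2/\sum_jt_j\delta_j^2$, Jensen yields $|\n\phi^\l_\xi|^2\le\sum_iw_i|\n\log\delta_i^2|^2$, hence
\[
\int_\S|\n\phi^\l_\xi|^2\le\sum_i\int_\S|\n\log\delta_i^2|^2=32\pi N\log\l+O(1)\quad(\xi\in(\widetilde\S)_N),
\]
and analogously $\le 16\pi M\log\l+O(1)$ when $\xi\in(\de\S)_M$. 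Concavity of $\log$ gives $\int_\S\phi^\l_\xi\ge\sum_it_i\int_\S\log\delta_i^2=-2|\S|\log\l+O(1)$; a matching upper bound follows by splitting $\S$ into small disks around the $x_i$'s (where $\phi^\l_\xi\le 2\log\l+\log N$) and their complement (where $\phi^\l_\xi\le -2\log\l+O(1)$). Finally, localizing each nonlinear integral near $x_i$ and rescaling $z=\l(x-x_i)$ yields $\log\int_\S Ke^{\phi^\l_\xi}=O(1)$ in both cases, while $\log\int_{\de\S}he^{\phi^\l_\xi/2}=-\log\l+O(1)$ when all $x_i\in\widetilde\S$ (all bubbles are far from $\de\S$) and $O(1)$ when all $x_i\in\de\S$ (each bubble contributes a finite limit mass to $\int_{\de\S}he^{\phi^\l_\xi/2}$).

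Substituting into \eqref{j}, in the interior case ($\xi\in(\widetilde\S)_N$, so $N\ge1$ and $\rho>4\pi N$) the boundary contribution $-4\rho'\log\int_{\de\S}he^{\phi^\l_\xi/2}=4\rho'\log\l+O(1)$ cancels the $\rho'$ portion of the linear term $-4(\rho+\rho')\log\l$, leaving
\[
\mathcal J_{\rho,\rho'}(\phi^\l_\xi)\le(16\pi N-4\rho)\log\l+O(1)\underset{\l\to+\infty}\to-\infty;
\]
in the boundary case ($\xi\in(\de\S)_M$, so $M\ge1$ and $\rho+\rho'>2\pi M$) both nonlinear terms are $O(1)$, and
\[
\mathcal J_{\rho,\rho'}(\phi^\l_\xi)\le(8\pi M-4(\rho+\rho'))\log\l+O(1)\underset{\l\to+\infty}\to-\infty.
\]

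The main obstacle is uniformity of the $O(1)$ remainders on the whole compact $\mathcal X$, including the degenerate configurations of colliding centers ($x_i\to x_j$) and vanishing weights ($t_i\to0$). Both are benign: collision continuously merges two bubbles into one of weight $t_i+t_j$, and $t_i\to0$ continuously removes the $i$-th bubble from the sum; every estimate above depends continuously on $\xi$ in the $\mathrm{Lip}'$ topology, so compactness of $\mathcal X$ upgrades the pointwise bounds to uniform ones, which yields the desired uniform divergence $\mathcal J_{\rho,\rho'}(\phi^\l_\xi)\to-\infty$.
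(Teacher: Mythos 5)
Your proposal is correct and follows essentially the same route as the paper, which simply asserts the four asymptotic estimates (citing \cite{dm,guoliu}) and reads off the leading coefficients $(16N\pi-4\rho)\log\l$ and $(8M\pi-4(\rho+\rho'))\log\l$; your expansions and the final cancellations match exactly, and your explicit mechanisms (Jensen with weights $w_i\le1$, $\sum_it_i=1$, $\dist(x_i,\de\S)\ge\d$ in the interior case) are what actually make the $O(1)$ terms uniform on $\mathcal X$. The only soft spot is the closing ``continuity plus compactness'' remark, which by itself would not control a remainder uniformly in $\l$ (a supremum over $\l$ of continuous functions is only lower semicontinuous) --- but this is harmless since the estimates you derive are already manifestly uniform in $\xi$, including at collisions and vanishing weights.
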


\begin{proof}
The following estimates can be easily verified (see for instance \cite{dm,guoliu}):

\begin{eqnarray*}
\int_\S\left|\n\phi^\l_\xi\right|^2&=&\left\{\begin{array}{ll}32N\pi\log\l+O(1)&\text{if }N\ge M,\vspace{.3cm}\\16M\pi\log\l+O(1)&\text{if }N<M;\end{array}\right.\vspace{.3cm}\\
\log\int_\S Ke^{\phi^\l_\xi}&=&O(1);\vspace{.3cm}\\
\log\int_{\de\S}he^\frac{\phi^\l_\xi}2&=&\left\{\begin{array}{ll}-\log\l+O(1)&\text{if }N\ge M,\vspace{.3cm}\\O(1)&\text{if }N<M;\end{array}\right.\vspace{.3cm}\\
\int_\S\phi^\l_\xi&=&-2|\S|\log\l+O(1);
\end{eqnarray*}
with $O(1)$ independent of $\xi$. Therefore, one gets

$$\mathcal J\left(\phi^\l_\xi\right)=\left\{\begin{array}{ll}(16N\pi-4\rho)\log\l+O(1)&\text{if }N\ge M,\vspace{.3cm}\\(8M\pi-4(\rho+\rho'))\log\l+O(1)&\text{if }N<M,\end{array}\right.$$
which in both cases goes uniformly to $-\infty$ as $\l\to+\infty$.
\end{proof}

\

Let us now consider the map $\Psi:\mathcal J^{-L}\to\mathcal X$. Its existence will follow by showing that for any $u\in\mathcal J^{-L}$ the unit measure $\frac{Ke^u}{\int_\S Ke^u}$ is close to a barycenter of $\mathcal X$.

The first step is an \emph{improved} Moser--Trudinger inequality: roughly speaking, if $u$ is \emph{spread} in some different regions of $\S$, then the constants $4\pi,-2\pi$ in inequality \eqref{mt} can be multiplied by some integer numbers, depending on the number and the position of such regions. In order to do it, the following localized version of Moser--Trudinger type inequalities will be of use (see \cite{ruizsoriano}, Proposition 2.2 for a proof).

\begin{proposition}\label{mt1l}
Let $\e>0,\d>0$ and $\Omega_1\subset\S$ such that $\dist(\Omega_1,\de\S)>\d$.
Then, there exists $C=C_{\e,\d}$ such that for every $u\in H^1(\S)$,

$$16\pi\log\int_{\Omega_1}e^u\le\int_{\Omega^\d_1}\left|\n u\right|^2+\e\int_\S\left|\n u\right|^2+\frac{16\pi}{|\S|}\int_\S u+C.$$

\end{proposition}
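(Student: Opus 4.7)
The plan is to deduce the localized Moser--Trudinger estimate from the sharp $H^1_0$ Moser inequality in the plane via a cutoff argument, exploiting the key fact that $\dist(\Omega_1,\de\S)>\d$ forces any cutoff supported in $\Omega_1^\d$ to vanish in a neighborhood of $\de\S$. Concretely, I would fix a smooth $\chi\in C^\infty(\S)$ with $0\le\chi\le1$, $\chi\equiv1$ on $\Omega_1$, $\supp\chi\subset\Omega_1^\d$ at positive distance from $\de\S$, and $|\n\chi|\le C_\d$. Setting $\bar u:=\frac1{|\S|}\int_\S u$ and $w:=\chi(u-\bar u)$, the function $w$ is compactly supported in $\mathring\S$ and, via a local chart, may be regarded as an $H^1_0$ function on a bounded planar domain.

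Combining Moser's sharp estimate $\int e^{4\pi f^2/\|\n f\|_{L^2}^2}\le C$ with the pointwise Young inequality $f\le 4\pi f^2/\|\n f\|_{L^2}^2+\|\n f\|_{L^2}^2/(16\pi)$ yields $16\pi\log\int e^w\le\|\n w\|_{L^2}^2+C_\d$. Since $\chi\equiv1$ on $\Omega_1$ one has $\int_{\Omega_1}e^u=e^{\bar u}\int_{\Omega_1}e^w\le e^{\bar u}\int e^w$, so
\[
16\pi\log\int_{\Omega_1}e^u\le\frac{16\pi}{|\S|}\int_\S u+\|\n w\|_{L^2}^2+C_\d.
\]
Expanding $\n w=\chi\n u+(u-\bar u)\n\chi$ and integrating by parts on the cross term (the boundary contribution vanishes since $\chi\equiv0$ near $\de\S$) gives
\[
\|\n w\|_{L^2}^2=\int\chi^2|\n u|^2-\int(u-\bar u)^2\chi\Delta\chi,
\]
where $\chi\Delta\chi$ vanishes on $\Omega_1$ (since $\chi\equiv1$) and outside $\supp\chi$, hence is supported in the annular region $\Omega_1^\d\setminus\Omega_1$. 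The first piece is bounded by $\int_{\Omega_1^\d}|\n u|^2$, which already matches the leading term of the claim.

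The main obstacle is to bound the annular residual by $\e\int_\S|\n u|^2+C_{\e,\d}$ with an \emph{arbitrarily small} coefficient $\e$. A naive application of Poincar\'e--Wirtinger on $\S$ together with $\|\chi\Delta\chi\|_\infty\le C/\d^2$ yields only a fixed $\d$-dependent multiple of $\int_\S|\n u|^2$. The refinement comes from subtracting the local rather than the global mean: writing $(u-\bar u)^2=(u-\bar u_{\Omega_1^\d})^2+2(u-\bar u_{\Omega_1^\d})(\bar u_{\Omega_1^\d}-\bar u)+(\bar u_{\Omega_1^\d}-\bar u)^2$, a Poincar\'e--Wirtinger inequality on $\Omega_1^\d$ controls $\|u-\bar u_{\Omega_1^\d}\|_{L^2(\Omega_1^\d)}^2$ by $\int_{\Omega_1^\d}|\n u|^2$, which is absorbable in the main interior term. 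The remaining scalar discrepancy satisfies $|\bar u_{\Omega_1^\d}-\bar u_\S|\le C_\d\|\n u\|_{L^2(\S)}$ by Cauchy--Schwarz and global Poincar\'e, and since this quantity appears only to the \emph{first} power in the error, a Young inequality of the form $C_\d\|\n u\|_{L^2(\S)}\le\e\|\n u\|_{L^2(\S)}^2+C_{\e,\d}$ produces the desired arbitrary $\e$. Substituting these estimates back into $\|\n w\|_{L^2}^2$ concludes the proof.
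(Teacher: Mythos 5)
Your setup is sound --- the cutoff $w=\chi(u-\bar u)$, the reduction to the sharp $H^1_0$ Moser inequality, and the identity $\|\n w\|_{L^2}^2=\int\chi^2|\n u|^2-\int(u-\bar u)^2\chi\Delta\chi$ are all correct, and this is indeed the standard skeleton (the paper itself only cites \cite{ruizsoriano}, Proposition 2.2, for this statement). However, your treatment of the residual term has a genuine gap. That term is \emph{homogeneous of degree two} in $u-\bar u$, and re-centering at the local mean does not change this. Concretely: $C_\d\int_{\Omega_1^\d}(u-\bar u_{\Omega_1^\d})^2\le C_\d\,C_{PW}(\Omega_1^\d)\int_{\Omega_1^\d}|\n u|^2$ with a \emph{fixed} constant $C_\d C_{PW}$, which cannot be ``absorbed in the main interior term'' --- the target inequality allows coefficient exactly $1$ on $\int_{\Omega_1^\d}|\n u|^2$ plus only $\e$ on the global energy, and $C_\d C_{PW}$ does not tend to $0$ as $\e\to0$. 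Moreover, the scalar discrepancy does not ``appear only to the first power'': your own decomposition contains $(\bar u_{\Omega_1^\d}-\bar u)^2\le C_\d^2\int_\S|\n u|^2$, and the cross term is likewise quadratic by Cauchy--Schwarz, so the Young inequality $C_\d t\le\e t^2+C_{\e,\d}$ has nothing first-order to act on. What you would actually obtain is $16\pi\log\int_{\Omega_1}e^u\le(1+C_\d)\int_{\Omega_1^\d}|\n u|^2+\frac{16\pi}{|\S|}\int_\S u+C_\d$, which is strictly weaker and useless for Lemma \ref{mtimproved}, where summing over $J$ regions must produce a total coefficient $1+J\e$ close to $1$.

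The missing idea is the truncation trick, which the paper carries out explicitly in the proof of the boundary analogue, Proposition \ref{mt2l}: apply your cutoff computation to $(u-\bar u-a)^+$ rather than to $u-\bar u$, for a large level $a$. The price is an additive $16\pi a$ from $e^u\le e^{a+\bar u}e^{(u-\bar u-a)^+}$, which \emph{is} first order and is absorbed by Poincar\'e plus Young as in \eqref{mtl6}. The gain is that the quadratic residual is now supported on the superlevel set of measure $\eta:=|\{u-\bar u\ge a\}|$, so H\"older and the Sobolev embedding $H^1\hookrightarrow L^4$ give $\int_\S\left(\left(u-\bar u-a\right)^+\right)^2\le C\sqrt\eta\int_\S\left(\left(\left(u-\bar u-a\right)^+\right)^2+|\n u|^2\right)$ as in \eqref{mtl4}; choosing $a$ so large that $\eta\le\e^2$ makes the coefficient of the residual genuinely small. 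Splicing this into your computation yields the stated inequality with the arbitrary $\e$.
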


We now extend this result to the case when $\Omega$ may touch the boundary, following \cite{chen-li}, Theorem 2.1.

\begin{proposition}\label{mt2l}
Let $\e>0,\d>0$ and $\G_2:=\Omega_2\cap\de\S\neq\emptyset$.
Then, there exists $C=C_{\e,\d}$ such that for every $u\in H^1(\S)$

$$16\pi\log\int_{\Omega_2}e^u-16\pi\log\int_{(\G_2)^\frac\d2}e^\frac u2\le\int_{\Omega^\d_2}\left|\n u\right|^2+\frac{8\pi}{|\S|}\int_\S u+\e\int_\S\left|\n u\right|^2+C.$$

\end{proposition}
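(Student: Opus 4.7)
I would adapt the reflection--plus--subtraction strategy of Chen--Li (Theorem 2.1 in \cite{chen-li}): combine a sharp Moser--Trudinger inequality on the doubled surface with the boundary Moser--Trudinger inequality \eqref{liliuineq}, subtracting the two so that the factor $2$ produced by the reflection is cancelled by exactly one copy of the Dirichlet energy.

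Pick $\eta\in C^\infty(\S;[0,1])$ with $\eta\equiv1$ on $\Omega_2^{\d/2}$, $\supp\eta\subset\Omega_2^\d$ and $|\n\eta|\leq C/\d$, and set $v:=\eta(u-\bar u)$ with $\bar u:=\frac1{|\S|}\int_\S u$. Let $\widehat\S:=\S\cup_{\de\S}\S$ be the closed doubled surface and extend $v$ by even reflection across $\de\S$ to $\hat v\in H^1(\widehat\S)$. Applying the closed--surface inequality \eqref{mtineq} to $\hat v$ and using the symmetry relations $\int_{\widehat\S}e^{\hat v}=2\int_\S e^v$, $\int_{\widehat\S}|\n\hat v|^2=2\int_\S|\n v|^2$, $|\widehat\S|=2|\S|$, after noting that $v\equiv u-\bar u$ on $\Omega_2$ I obtain
\begin{equation*}
16\pi\log\int_{\Omega_2}e^u\leq 2\int_\S|\n v|^2+\tfrac{16\pi}{|\S|}\int_\S u+R_1,\qquad R_1:=\tfrac{16\pi}{|\S|}\int_\S v+C.
\end{equation*}

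Next I apply \eqref{liliuineq} directly to $v$; since its trace on $\de\S$ equals $u-\bar u$ on $(\G_2)^{\d/2}$ (interpreted within $\de\S$), the same bookkeeping produces
\begin{equation*}
16\pi\log\int_{(\G_2)^{\d/2}}e^{u/2}\leq\int_\S|\n v|^2+\tfrac{8\pi}{|\S|}\int_\S u+R_2,
\end{equation*}
where the boundary mean $\frac{8\pi}{|\de\S|}\int_{\de\S}v$ hidden in $R_2$ is bounded by $\e\int_\S|\n u|^2+C_\e$ via Poincar\'e. Subtracting the second estimate from the first eliminates precisely one copy of $\int_\S|\n v|^2$, the interior means combine as $\frac{16\pi}{|\S|}-\frac{8\pi}{|\S|}=\frac{8\pi}{|\S|}$, and the residuals $R_1,R_2$ are likewise of order $\e\int_\S|\n u|^2+C_\e$, giving
\begin{equation*}
16\pi\log\int_{\Omega_2}e^u-16\pi\log\int_{(\G_2)^{\d/2}}e^{u/2}\leq\int_\S|\n v|^2+\tfrac{8\pi}{|\S|}\int_\S u+\e\int_\S|\n u|^2+C.
\end{equation*}

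To finish I expand $|\n v|^2=\eta^2|\n u|^2+2\eta(u-\bar u)\n\eta\cdot\n u+|\n\eta|^2(u-\bar u)^2$ and apply Poincar\'e--Wirtinger on the transition annulus $\Omega_2^\d\setminus\Omega_2^{\d/2}$ (the same device used in the proof of Proposition \ref{mt1l}) to replace $\int_\S|\n v|^2$ by $\int_{\Omega_2^\d}|\n u|^2+\e\int_\S|\n u|^2+C_{\e,\d}$, which yields the claim. The central technical hurdle is the reflection step: even reflection of an $H^1$ function across a smooth boundary does produce an $H^1$ function on the double, and the sharp constant $16\pi$ in both \eqref{mtineq} and \eqref{liliuineq} is exactly what makes the subtraction cancel the spurious factor of $2$. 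The delicate bookkeeping of the cutoff--error term $\int|\n\eta|^2(u-\bar u)^2$, which a priori is only bounded by $C\int_\S|\n u|^2$, is what forces the slack $\e\int_\S|\n u|^2$ in the final estimate.
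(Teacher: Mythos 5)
There is a genuine logical gap at the heart of your argument: the ``subtraction'' step is invalid. You establish two \emph{upper} bounds,
\begin{equation*}
16\pi\log\int_{\Omega_2}e^u\le 2\int_\S|\n v|^2+\dots\qquad\text{and}\qquad 16\pi\log\int_{(\G_2)^{\d/2}}e^{u/2}\le\int_\S|\n v|^2+\dots,
\end{equation*}
and then subtract the second from the first. But from $X\le Y$ and $A\le B$ one cannot conclude $X-A\le Y-B$; to bound $X-A$ from above you need a \emph{lower} bound on $A$, i.e.\ on $\log\int_{(\G_2)^{\d/2}}e^{u/2}$, and \eqref{liliuineq} gives the wrong direction. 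This is not a repairable bookkeeping slip: the inequality you are implicitly trying to manufacture, namely $16\pi\log\int_\S e^v-16\pi\log\int_{\de\S}e^{v/2}\le(1+\e)\int_\S|\n v|^2+\frac{8\pi}{|\S|}\int_\S v+C$, is precisely the two-term inequality \eqref{mt} near the corner $(\rho,\rho')=(4\pi,-2\pi)$, which is the hard content of Theorem \ref{mtteo}(1) and is proved in the paper only through the blow--up analysis of Section 2. It does not follow from \eqref{mtineq} and \eqref{liliuineq} by any linear combination (interpolation with positive weights only reaches $\rho\le2\pi$, as in Lemma \ref{interpol}); the reflection step by itself merely reproduces \eqref{chygineq}, whose constant $8\pi$ is sharp for boundary concentration, so the factor $2$ you pick up on the Dirichlet energy cannot be cancelled without a genuine lower bound on the boundary integral.

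The paper's actual proof is much shorter precisely because it takes \eqref{mt} as already established: one applies \eqref{mt} with $\rho=\frac{4\pi}{1+\e}$ (so that the boundary logarithm enters with a negative coefficient close to $-16\pi$) to the cutoff function $gu$, and then controls the cutoff error $\int_\S|\n g|^2u^2$ by replacing $u$ with the truncation $(u-a)^+$ and choosing $a$ so that $|\{u\ge a\}|$ is small, which is where the slack $\e\int_\S|\n u|^2$ comes from. Your treatment of the cutoff and of the error terms is in the right spirit and matches that part of the paper's argument, but the core analytic input must be \eqref{mt} itself, not a subtraction of the two classical inequalities.
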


\begin{proof}
First of all, as the inequality we want to prove is invariant by addition of constants, it will not restrictive to prove it for $u\in\overline H^1(\S)$.

Consider a smooth cut--off function $g$ verifying

$$g(x)=\left\{\begin{array}{ll}
1&\text{if }x\in\Omega_2,\\
0&\text{if }x\in\S\setminus(\Omega_2)^\frac\d2.
\end{array}\right.$$

Clearly $gu\in H^1(\S)$, and moreover

\begin{eqnarray*}
16\pi\left(\log\int_{\Omega_2}e^u-\log\int_{(\G_2)^\frac\d2}e^\frac u2\right)&\le&16\pi\left(\log\int_\S e^{gu}-\log\int_{\de\S}e^\frac{gu}2\right)\\
&\le&(1+\e)\int_\S|\n(gu)|^2+\frac{8\pi}{|\S|}\int_\S gu+C,
\end{eqnarray*}
by inequality \eqref{mt} with $\rho=\frac{4\pi}{1+\e}$. Since $\int_\S u=0$, Poincar\'e--Wirtinger inequality gives

$$\int_\S gu\le\int_\S|u|\le C\sqrt{\int_\S|\n u|^2}\le\e\int_\S|\n u|^2+C.$$

On the other hand, applying the product rule and Cauchy inequality one has

$$\int_\S|\n(gu)|^2\le\int_\S\left((1+\e)g^2|\n u|^2+\left(1+\frac1\e\right)u^2|\n g|^2\right)\le\int_{(\Omega_2)^\d}|\n u|^2+\e\int_\S|\n u|^2+C_{\e,\d}\int_\S u^2.$$

Now, for any $a>0$, we define $\eta:=\left|\{x\in\S:u(x)\ge a\}\right|$ and apply the previous computations to $\left(u-a\right)^+$: we obtain

\begin{eqnarray}\label{cuenta2}
\nonumber16\pi\left(\log\int_{\Omega_2}e^u-\log\int_{(\G_2)^\frac\d2}e^\frac u2\right)&\le&16\pi\left(\log\left(e^a\int_{\Omega_2}e^{(u-a)^+}\right)-\log e^\frac a2\int_{(\G_2)^\frac\d2}e^\frac{(u-a)^+}2\right)\vspace{.3cm}\\
&\le&8\pi a+\int_{(\Omega_2)^\d}\left|\n u\right|^2+\e\int_\S\left|\n u\right|^2+C_{\e,\d}\int_\S\left((u-a)^+\right)^2.
\end{eqnarray}

To deal with the first term, we use Poincar\'e-Wirtinger and Cauchy inequalities:

\beq\label{mtl6}
a\le\frac1\eta\int_{\{u\ge a\}}u\le\frac1\eta\int_\S\left|u\right|\le\frac C\eta\sqrt{\int_\S\left|\n u\right|^2}\le\e\int_\S\left|\n u\right|^2+\frac C{4\e\eta^2}.
\eeq

On the other hand, the last term in \eqref{cuenta2} can be estimated using H\"older and Sobolev inequalities:

\beq\label{mtl4}
\int_\S\left((u-a)^+\right)^2\le\sqrt\eta\sqrt{\int_\S\left((u-a)^+\right)^4}\le C\sqrt\eta\int_\S\left(\left((u-a)^+\right)^2+|\n u|^2\right).
\eeq

If we choose $a$ so large that $\eta\le\e^2$ and $\e$ is small enough, then \eqref{mtl4} gives $\int_\S\left((u-a)^+\right)^2\le C'\e\int_\S\left|\n u\right|^2$. This estimate, together with \eqref{cuenta2} and \eqref{mtl4}, concludes the proof.

\end{proof}

We can now state and prove the following improved Moser--Trudinger inequality.

\begin{lemma}\label{mtimproved}Assume $\Omega_{11},\dots,\Omega_{1J},\Omega_{21},\dots,\Omega_{2K}\subset\S$ and $u\in H^1(\S)$ satisfy

\begin{eqnarray}\label{masa}
\nonumber\dist(\Omega_{ij},\Omega_{kl})\ge\d,&\quad&\forall(i,j)\ne(k,l);\vspace{0.3cm}\\[5pt]
\nonumber\dist(\Omega_{1j},\de\S)\ge\d,&\quad&\forall j=1,\dots,J;\vspace{0.3cm}\\[5pt]
\frac{\int_{\Omega_{ij}}e^u}{\int_\S e^u}\ge\d,&\quad&\forall i,j.
\end{eqnarray}

Then, for any $\e>0$ there exists $C=C_\e$ such that

$$16(J+K)\pi\log\int_\S e^u-16K\pi\log\int_{\de\S}e^\frac u2\le(1+\e)\int_\S\left|\n u\right|^2+\frac{8\pi(2J+K)}{|\S|}\int_\S u+C.$$

\end{lemma}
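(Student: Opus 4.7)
The plan is to combine $J$ applications of Proposition \ref{mt1l} and $K$ applications of Proposition \ref{mt2l}, one for each concentration region, and then exploit the mass lower bound in \eqref{masa} to replace each local $\log\int_{\Omega_{ij}}e^u$ by the global $\log\int_\S e^u$.

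First I would shrink $\d$ to some $\d'\in(0,\d)$ small enough that the enlargements $\Omega^{\d'}_{ij}$ are pairwise disjoint subsets of $\S$ and each $\Omega^{\d'}_{1j}$ stays separated from $\de\S$; the first two conditions in \eqref{masa} make this possible. For every $j=1,\dots,J$, Proposition \ref{mt1l} applied to $\Omega_{1j}$ with parameter $\e/(J+K)$ in place of $\e$ and $\d'$ in place of $\d$ gives
$$16\pi\log\int_{\Omega_{1j}}e^u\le\int_{\Omega^{\d'}_{1j}}|\n u|^2+\frac{\e}{J+K}\int_\S|\n u|^2+\frac{16\pi}{|\S|}\int_\S u+C_\e,$$
and for every $k=1,\dots,K$, Proposition \ref{mt2l} applied to $\Omega_{2k}$ with the same parameters gives
$$16\pi\log\int_{\Omega_{2k}}e^u-16\pi\log\int_{(\G_{2k})^{\d'/2}}e^{u/2}\le\int_{\Omega^{\d'}_{2k}}|\n u|^2+\frac{\e}{J+K}\int_\S|\n u|^2+\frac{8\pi}{|\S|}\int_\S u+C_\e,$$
where $\G_{2k}:=\Omega_{2k}\cap\de\S$. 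The mass hypothesis $\int_{\Omega_{ij}}e^u\ge\d\int_\S e^u$ bounds each local log from below by $\log\int_\S e^u-\log(1/\d)$, and by monotonicity $-\log\int_{(\G_{2k})^{\d'/2}}e^{u/2}\ge-\log\int_{\de\S}e^{u/2}$.

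Summing the $J+K$ resulting inequalities and absorbing the additive terms $\log(1/\d)$ into $C_\e$, the left-hand side becomes exactly $16(J+K)\pi\log\int_\S e^u-16K\pi\log\int_{\de\S}e^{u/2}$. On the right, pairwise disjointness of the $\Omega^{\d'}_{ij}$ bounds $\sum_{i,j}\int_{\Omega^{\d'}_{ij}}|\n u|^2$ by $\int_\S|\n u|^2$, the $(J+K)$ copies of $\frac{\e}{J+K}\int_\S|\n u|^2$ add to $\e\int_\S|\n u|^2$ for a total of $(1+\e)\int_\S|\n u|^2$, and the mean-value contributions combine to $\frac{16J\pi+8K\pi}{|\S|}\int_\S u=\frac{8\pi(2J+K)}{|\S|}\int_\S u$; this is exactly the claimed inequality.

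There is no conceptual obstacle here: this is the standard covering/splitting trick for upgrading a Moser--Trudinger inequality when the mass of $e^u$ spreads over several well-separated regions, analogous to the arguments in \cite{chen-li} and in the mean-field references already cited in the introduction. The only delicate step is the bookkeeping, in particular the passage from $\d$ to $\d'$ so that the $\d'$-neighborhoods fit the hypotheses of Propositions \ref{mt1l} and \ref{mt2l} and the error $\e$ is correctly distributed among the $J+K$ inequalities so that the sum gives $(1+\e)$ rather than a larger factor.
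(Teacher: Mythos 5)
Your proposal is correct and follows essentially the same route as the paper: apply Proposition \ref{mt1l} to each interior region and Proposition \ref{mt2l} to each boundary region, use the mass lower bound in \eqref{masa} to replace the local integrals $\int_{\Omega_{ij}}e^u$ by $\int_\S e^u$ at the cost of a constant, and sum. Your bookkeeping is in fact slightly tidier than the paper's (which applies the propositions with $\e$ itself and ends with a factor $1+(J+K)\e$, and does not explicitly shrink $\d$ to make the enlarged sets disjoint), but these are cosmetic differences.
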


\begin{proof}
First take $\Omega_{11},\ldots,\Omega_{1J}$ verifying \eqref{masa} and apply Proposition \ref{mt1l} for each one, then

$$16\pi\log\int_\S e^u\le16\pi\log\left(\frac1\d\int_{\Omega_{1j}}e^u\right)\le\int_{(\Omega_{1j})^\d}\left|\n u\right|^2+\e\int_\S\left|\n u\right|^2+\frac{16\pi}{|\S|}\int_\S u+C_{\e,\d},$$
for $j=1,\ldots,J$. Now, take $\Omega_{21},\ldots,\Omega_{2K}$ verifying \eqref{masa} and apply Proposition \ref{mt2l}, so

\begin{eqnarray*}
16\pi\left(\log\int_\S e^u-\log\int_{\de\S}e^\frac u2\right)&\le&16\pi\log\left(\frac1\d\int_{\Omega_{2k}}e^u\right)-\log\int_{\G_{2k}}e^\frac u2\\
&\le&\int_{(\Omega_{2k})^\d}\left|\n u\right|^2+\e\int_\S\left|\n u\right|^2+\frac{8\pi}{|\S|}\int_\S u+C_{\e,\d},
\end{eqnarray*}
for $k=1,\ldots,K$. Finally, we sum both previous inequalities for all $j,k$ to get

\begin{eqnarray*}
&&16(J+K)\pi\log\int_\S e^u-16K\pi\log\int_{\de\S}e^\frac u2\\
&\le&\int_{\bigcup_{i,j}(\Omega_{ij})^\d}\left|\n u\right|^2+(J+K)\e\int_\S\left|\n u\right|^2+\frac{8\pi(2J+K)}{|\S|}\int_\S u+C_{\e,\d}\\
&\le&(1+(J+K)\e)\int_\S\left|\n u\right|^2+\frac{8\pi(2J+K)}{|\S|}\int_\S u+C_{\e,\d},
\end{eqnarray*}
which concludes the proof.
\end{proof}

\

In order to apply Lemma \ref{mtimproved} to a wider range of parameter, we need an estimate of the boundary nonlinear term by means of the interior nonlinear term. Such a result somehow contains important information about the relation between $\rho$ and $\rho'$ in $\mathcal J$.

The following proposition gives a sort of \emph{monotonicity} property to the energy $\mathcal J$, not only with respect to each parameter $\rho,\rho'$, as shown in the proof of Lemma \ref{aperto}, but also with respect to the sum $\rho+\rho'$.

\begin{proposition}\label{prop0}
For any $\e>0$ there exists a constant $C=C_\e$ such that for $u\in H^1(\S)$,

\beq\label{prop01}
\log\int_{\de\S}e^\frac u2\le\frac12\log\int_\S e^u+\e\int_\S\left|\n u\right|^2+C.
\eeq

\end{proposition}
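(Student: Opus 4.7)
The plan is to derive \eqref{prop01} from Theorem~\ref{mtteo}(1) applied at an extreme parameter on the coercivity boundary, namely $(\rho,\rho') = (-\a, 2\pi + \a)$ with $\a > 0$ large (to be chosen in terms of $\e$). This pair satisfies $\rho \le 4\pi$, $\rho + \rho' = 2\pi$, and $(\rho,\rho') \ne (4\pi,-2\pi)$, so, specializing to the admissible choice $K \equiv h \equiv 1$ (which satisfies \eqref{h}), Theorem~\ref{mtteo}(1) yields a uniform lower bound $\mathcal J_{-\a, 2\pi+\a}(u) \ge -C_\a$ for every $u \in H^1(\S)$.

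Since both sides of \eqref{prop01} are invariant under adding a constant to $u$, I would first normalize $\int_\S u = 0$. Writing out the lower bound on $\mathcal J$ and isolating the boundary term then gives
$$4(2\pi+\a)\log\int_{\de\S}e^{\frac{u}{2}} \le \frac{1}{2}\int_\S|\n u|^2 + 2\a\log\int_\S e^u + C_\a.$$
Dividing by $4(2\pi+\a)$ and using the elementary identity $\frac{\a}{2(2\pi+\a)} = \frac{1}{2} - \frac{\pi}{2\pi+\a}$ rearranges this into
$$\log\int_{\de\S}e^{\frac{u}{2}} - \frac{1}{2}\log\int_\S e^u \le \frac{1}{8(2\pi+\a)}\int_\S|\n u|^2 - \frac{\pi}{2\pi+\a}\log\int_\S e^u + C'_\a.$$

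To finish, I would apply Jensen's inequality, which together with $\int_\S u = 0$ gives $\log\int_\S e^u \ge \log|\S|$, so the middle term on the right is bounded above by a constant depending only on $\a$ and $\S$. Given $\e > 0$, choosing $\a$ so that $\frac{1}{8(2\pi+\a)} \le \e$ (i.e.\ $\a \ge \frac{1}{8\e} - 2\pi$) then yields the desired inequality with $C_\e$ absorbing everything else.

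The main conceptual point -- the one I would expect to take a moment to see -- is that to obtain an \emph{arbitrarily small} gradient coefficient one must push $\rho \to -\infty$ along the coercivity boundary $\rho + \rho' = 2\pi$. A direct combination of the Chang--Yang inequality \eqref{chygineq} and the Li--Liu inequality \eqref{liliuineq} (equivalently, applying \eqref{mt} at $\rho = 0$) gives only the fixed coefficient $\frac{1}{16\pi}$ in front of $\int_\S|\n u|^2$, which does not suffice. The fact that $\mathcal J_{\rho, 2\pi-\rho}$ remains uniformly bounded below for \emph{arbitrarily negative} $\rho$ is a sharp consequence of Theorem~\ref{mtteo}(1), which in turn rests on Proposition~\ref{quasisharp}; this extra slack along the coercivity line is precisely what powers the inequality \eqref{prop01}.
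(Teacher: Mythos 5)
Your argument is correct, and the logical order works: Proposition \ref{prop0} is only used in Section 4, after Theorem \ref{mtteo} has been established, and nothing in the proof of Theorem \ref{mtteo} depends on it, so there is no circularity in deducing \eqref{prop01} from the Moser--Trudinger inequality at $(\rho,\rho')=(-\a,2\pi+\a)$. The algebra (normalizing $\int_\S u=0$, dividing by $4(2\pi+\a)$, discarding the leftover term $-\frac{\pi}{2\pi+\a}\log\int_\S e^u$ via Jensen, and choosing $\a\ge\frac1{8\e}-2\pi$) checks out, and the reduction to $K\equiv h\equiv1$ is harmless under \eqref{h}. However, the paper's proof is entirely different and much more elementary: it extends the outward unit normal to a smooth vector field $\eta$ on $\S$, applies the divergence theorem to $\eta e^{u/2}$, and estimates the two resulting terms by H\"older, which yields the stronger multiplicative trace-type bound \eqref{prop02}, namely $\int_{\de\S}e^{u/2}\le C(\int_\S e^u)^{1/2}(1+(\int_\S|\n u|^2)^{1/2})$; taking logarithms and using $\log(1+t)\le t$ with Cauchy's inequality then gives \eqref{prop01}. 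The trade-off: your route gets the inequality for free once Theorem \ref{mtteo} is available, but it leans on the hardest part of that theorem (the boundary case $\rho+\rho'=2\pi$ with $\rho$ arbitrarily negative, which rests on the blow-up analysis and Proposition \ref{quasisharp}), whereas the paper's short Stokes argument is self-contained and keeps Proposition \ref{prop0} independent of the variational machinery. Your closing remark---that interpolating \eqref{chygineq} and \eqref{liliuineq} only yields the fixed coefficient $\frac1{16\pi}$, so one must push $\rho\to-\infty$ along the coercivity line---is a correct diagnosis of why the naive approach fails, but the paper shows that this extra strength is not actually needed for \eqref{prop01}.
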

\begin{proof}
Take a $C^\infty$ vector field $\eta$ whose restriction to $\de\S$ is the outward normal vector field. By the Stokes' Theorem, we obtain that:

$$\int_{\de\S}e^\frac u2=\int_\S\mathrm{div}\left(\eta e^\frac u2\right)=\int_\S\left(\mathrm{div}(\eta)e^\frac u2+\eta\frac{\n u}2e^\frac u2\right).$$

By the smoothness of the vector field and H\"older inequality we have

$$\int_\S\mathrm{div}(\eta)e^\frac u2\le C\left(\int_\S e^u\right)^\frac12,$$

$$\int_\S\eta\frac{\n u}2e^\frac u2\le C\int_\S e^\frac u2|\n u|\le C\left(\int_\S e^u\right)^\frac12\left(\int_\S|\n u|^2\right)^\frac12.$$

Therefore,

\beq\label{prop02}
\int_{\de\S}e^\frac u2\le C\left(\int_\S e^u\right)^\frac12\left(1+\left(\int_\S\left|\n u\right|^2\right)^\frac12\right)
\eeq

Taking logarithms in \eqref{prop02} we obtain

$$\log\int_{\de\S}e^\frac u2\le\frac12\log\int_\S e^u+\log\left(1+\left(\int_\S\left|\n u\right|^2\right)^\frac12\right)+C.$$

Using the fact that $\log(1+t)\le t$ for any $t\ge0$ and the general Cauchy inequality, we obtain \eqref{prop01} as we wanted. 
\end{proof}

\

\begin{corollary}\label{mtimpr2}
Under the assumptions of Lemma \ref{mtimproved}, $\mathcal J_{\rho,\rho'}$ is bounded from below for any $\rho,\rho'$ satisfying $\rho<4\pi(J+K)$, $\rho+\rho'<2\pi(2J+K)$.
\end{corollary}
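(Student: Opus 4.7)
The plan is to combine Lemma~\ref{mtimproved} with Proposition~\ref{prop0} and conclude via a simple linear combination argument. Since $\mathcal J_{\rho,\rho'}$ is invariant under addition of constants, I may work on $u \in \overline H^1(\S)$. Set $A := \log\int_\S e^u$ and $B := \log\int_{\de\S} e^{u/2}$; because of \eqref{h}, the same quantities weighted by $K$ and $h$ differ only by bounded constants, so it suffices to show
$$2\rho A + 4\rho' B \le \Bigl(\tfrac12 - \eta\Bigr)\int_\S|\n u|^2 + C$$
for some $\eta > 0$. Besides Lemma~\ref{mtimproved} (call it $(L)$) and Proposition~\ref{prop0} rewritten as $(P)$: $-A + 2B \le 2\e\int_\S|\n u|^2 + C_\e$, I also have the trivial Jensen bounds $-A \le C$ and $-B \le \e\int_\S|\n u|^2 + C_\e$, the latter using Poincar\'e--Wirtinger on $\de\S$ (permitted since $\int_\S u = 0$).

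The idea is to express $2\rho A + 4\rho' B$ as a non-negative linear combination $\alpha\,(L) + \beta\,(P) + \gamma\,(-A) + \mu\,(-B)$ with $\alpha,\beta,\gamma,\mu \ge 0$. Matching the coefficients of $A$ and $B$ yields the linear system
$$16\pi(J+K)\alpha - \beta - \gamma = 2\rho, \qquad -16\pi K\alpha + 2\beta - \mu = 4\rho'.$$
In the main regime $\gamma = \mu = 0$ this forces $\alpha = \frac{\rho+\rho'}{4\pi(2J+K)}$ and $\beta = \frac{2K\rho + 4(J+K)\rho'}{2J+K}$, both automatically non-negative when $\rho+\rho' \ge 0$ and $K\rho + 2(J+K)\rho' \ge 0$. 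The resulting coefficient of $\int_\S|\n u|^2$ equals $\alpha(1+\e) + 2\beta\e$, which is strictly below $\frac12$ for $\e$ small precisely because of the hypothesis $\rho+\rho' < 2\pi(2J+K)$.

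When this choice fails, i.e.\ when $K\rho + 2(J+K)\rho' < 0$ (which forces $\rho' < 0$), I set $\beta = 0$ and transfer the mass to $\mu > 0$: the system then yields $\alpha = \frac{\rho}{8\pi(J+K)}$ and $\mu = -\frac{2(K\rho + 2(J+K)\rho')}{J+K} > 0$, so the coefficient of $\int_\S|\n u|^2$ becomes $\frac{\rho(1+\e)}{8\pi(J+K)} + O(\e)$, which is $<\frac12$ exactly by the other hypothesis $\rho < 4\pi(J+K)$. The remaining regime $\rho+\rho' < 0$ is absorbed by the trivial Jensen bounds alone (with $(L)$ added if $\rho > 0$, in which case the argument of the previous case applies verbatim since $K\rho + 2(J+K)\rho' < 0$ holds). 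The main obstacle is organising the case analysis cleanly: the whole content of the proposition is that the two hypotheses $\rho < 4\pi(J+K)$ and $\rho+\rho' < 2\pi(2J+K)$ correspond exactly to the two possible active multipliers $\alpha$ in the LP above, and it is precisely the interplay between $(L)$ (internal concentration) and $(P)$ (boundary-to-interior transfer) that makes the linear combination valid in each regime.
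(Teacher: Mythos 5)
Your proposal is built from exactly the same three ingredients as the paper's proof --- Lemma \ref{mtimproved}, Proposition \ref{prop0}, and the Jensen/trace lower bounds on $\log\int_\S e^u$ and $\log\int_{\de\S}e^{u/2}$ --- assembled as a nonnegative linear combination, so the substance coincides; the difference is organizational. The paper avoids your case analysis by always applying Lemma \ref{mtimproved} with the \emph{maximal} admissible multiplier (absorbing roughly $\tfrac12\int_\S|\n u|^2$), which leaves residual coefficients $\frac{8\pi(J+K)}{1+2\e}-2\rho$ on $\log\int_\S e^u$ and $-\bigl(\frac{8\pi K}{1+2\e}+4\rho'\bigr)$ on $\log\int_{\de\S}e^{u/2}$; the first is nonnegative under $\rho<4\pi(J+K)$, so Proposition \ref{prop0} converts it entirely into a boundary term, and the resulting total boundary coefficient $\frac{8\pi(2J+K)}{1+2\e}-4(\rho+\rho')$ is nonnegative under $\rho+\rho'<2\pi(2J+K)$ and is killed by Jensen. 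This chain works uniformly in the signs of $\rho,\rho'$, whereas solving for the exact multipliers, as you do, forces the sign discussion. Both routes are valid; the paper's is shorter, yours makes transparent that each hypothesis is precisely the feasibility condition for one of the two active multipliers.

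There is one slip in your last regime. When $\rho+\rho'<0$ and $\rho\le0$ but $\rho'>0$ (e.g.\ $\rho=-10$, $\rho'=1$), the ``trivial Jensen bounds alone'' do not close the argument: Jensen and the trace inequality give only \emph{lower} bounds on $B=\log\int_{\de\S}e^{u/2}$, whereas the positive coefficient $4\rho'$ requires an \emph{upper} bound on $B$. The repair stays inside your framework: take $\beta=2\rho'$ in $(P)$ to trade $4\rho'B$ for $2\rho'A+O(\e)\int_\S|\n u|^2$, after which the total coefficient of $A$ is $2(\rho+\rho')<0$ and Jensen applies. With that one-line fix your proof is complete.
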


\begin{proof}
We apply Lemma \ref{mtimproved} with $\e>0$ small enough to have

$$\frac{8\pi(J+K)}{1+2\e}-2\rho\ge0,\quad\quad\quad\quad\quad\quad\frac{8\pi(2J+K)}{1+2\e}-4\rho-4\rho'\ge0.$$

Next, we apply Proposition \ref{prop0} with some $\e'$, possibly different from $\e$, to be chosen later. Notice also that, due to Jensen and trace Sobolev inequalities, we have

$$\log\int_{\de\S}e^\frac u2\ge\frac1{|\de\S|}\int_{\de\S}u+\log|\de\S|\ge\frac1{|\S|}\int_{\S}u-C\sqrt{\int_\S|\n u|^2}+\log|\de\S|\ge\frac1{|\S|}\int_{\S}u-\e'\int_\S|\n u|^2-C.$$

Therefore, assuming without loss of generality $\int_\S u=0$, we get

\begin{eqnarray*}
\mathcal J_{\rho,\rho'}(u)&\ge&\frac{1+\e}{2+4\e}\int_\S|\n u|^2-2\rho\log\int_\S e^u-4\rho'\log\int_{\de\S}e^\frac u2+\frac\e{2+4\e}\int_\S|\n u|^2\\
&\ge&\left(\frac{8\pi(J+K)}{1+2\e}-2\rho\right)\log\int_\S e^u-\left(\frac{8\pi K}{1+2\e}+4\rho'\right)\log\int_{\de\S}e^\frac u2+\frac\e{2+4\e}\int_\S|\n u|^2-C\\
&\ge&\left(\frac{8\pi(2J+K)}{1+2\e}-4\rho-4\rho'\right)\log\int_{\de\S}e^\frac u2-\e'\left(\frac{8\pi(J+K)}{1+2\e}-2\rho\right)\int_\S|\n u|^2+\frac\e{2+4\e}\int_\S|\n u|^2-C\\
&\ge&-\e'\left(\frac{8\pi(2J+K)}{1+2\e}-4\rho-4\rho'+\frac{8\pi(J+K)}{1+2\e}-2\rho\right)\int_\S|\n u|^2+\frac\e{2+4\e}\int_\S|\n u|^2-C
\end{eqnarray*}
which is bounded from below if $\e'$ is chosen properly.
\end{proof}

\

Thanks to Lemma \ref{mtimproved}, we can show that, as $\mathcal J_{\rho,\rho'}(u)$ is more negative, $u$ is closer to some barycenter space. However, this space will be not be the $\mathcal X$ introduced before, but rather a larger $\mathcal X'$, containing more than one \emph{stratum} of barycenters centered at both $\mathring\S$ and $\de\S$. In particular, $\mathcal X'$ will contain combinations of $J$ points in $\mathring\S$ and $K$ points in $\de\S$ satisfying either $J+K\le N$ or $2J+K\le M$, namely:

\beq\label{xprim}
\boxed{\mathcal X':=\left\{\begin{array}{ll}(\S)_N=\bigcup_{i=0}^N(\S)_{i,N-i}&\text{if }N\ge M,\vspace{.3cm}\\
\bigcup_{i=0}^{M-N+1}(\S)_{i,M-2i}\cup\bigcup_{i=M-N}^N(\S)_{i,N-i}&\text{if }N<M;\end{array}\right.}
\eeq
where

\begin{equation}
\label{sigmajk}
(\S)_{J,K}:=\left\{\sum_{i=1}^Jt_i\d_{p_i}+\sum_{i=1}^Ks_i\d_{q_i};\,p_i\in\mathring\S,\,q_i\in\de\S,\,\sum_{i=1}^Jt_i+\sum_{i=1}^Ks_i=1\right\}.
\end{equation}

Notice that the explicit expression of $\mathcal X'$, as well as $\mathcal X$, is different in the cases $N\ge M$ and $N<M$, hinting that the two cases could be somehow different.

\begin{lemma}\label{lemma2}
For any $\e>0$ there exists $L=L_\e>0$ such that any $u\in\mathcal J^{-L}$ satisfies

$$\dist_{\mathrm{Lip}'(\S)}\left(\frac{Ke^u}{\int_\S Ke^u},\mathcal X'\right)<\e,$$
where $\mathcal X'$ is defined in \eqref{xprim}.
\end{lemma}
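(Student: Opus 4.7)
The plan is to argue by contradiction. Suppose there exist $\e_0>0$ and a sequence $u_n\in H^1(\S)$ with $\mathcal J_{\rho,\rho'}(u_n)\underset{n\to+\infty}\to-\infty$ and yet $\dist_{\mathrm{Lip}'(\S)}(\mu_n,\mathcal X')\ge\e_0$ for all $n$, where $\mu_n:=\frac{Ke^{u_n}}{\int_\S Ke^{u_n}}$. Since $\mathcal J_{\rho,\rho'}$ is invariant by addition of constants, one may normalize $\int_\S u_n=0$. The goal is to extract from the $\e_0$-far assumption enough concentration regions to trigger the improved Moser--Trudinger inequality of Lemma \ref{mtimproved} with $(J,K)$ strictly outside the admissible set of $\mathcal X'$, so as to deduce that $\mathcal J_{\rho,\rho'}(u_n)$ is uniformly bounded below, contradicting the hypothesis.

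The key observation is that, by the definitions \eqref{xprim}--\eqref{sigmajk}, a probability measure belongs to $\mathcal X'$ precisely when it can be written as a convex combination of at most $J$ interior Diracs and at most $K$ boundary Diracs for some $(J,K)$ admissible with respect to $\mathcal X'$; and the complementary condition $(J,K)\notin\mathcal X'$ reads exactly as $J+K\ge N+1$ \emph{and} $2J+K\ge M+1$ (noting that when $N\ge M$ the second inequality follows automatically from the first, which is consistent with the case distinction in \eqref{xprim}). A covering argument in the spirit of \cite{dm} applied to $\mu_n$ then yields, for some $\d=\d(\e_0)>0$ and along a subsequence, pairwise disjoint sets $\Omega_{11},\dots,\Omega_{1J}$ with $\dist(\Omega_{1j},\de\S)\ge\d$ and $\Omega_{21},\dots,\Omega_{2K}$ with $\Omega_{2k}\cap\de\S\ne\emptyset$, all at mutual distance $\ge\d$ and all satisfying $\mu_n(\Omega_{ij})\ge\d$, with $(J,K)$ verifying both $J+K\ge N+1$ and $2J+K\ge M+1$. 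By \eqref{h}, the mass condition on $\mu_n$ transfers (after possibly shrinking $\d$) into $\int_{\Omega_{ij}}e^{u_n}\ge\d\int_\S e^{u_n}$, so hypothesis \eqref{masa} is met.

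With such $\Omega_{ij}$ in hand, I would apply Lemma \ref{mtimproved} to $u_n$, and then argue exactly as in the proof of Corollary \ref{mtimpr2}, using Proposition \ref{prop0} to trade the boundary nonlinear term for the interior one. Since by \eqref{nm} one has $\rho<4\pi(N+1)\le4\pi(J+K)$ and $\rho+\rho'<2\pi(M+1)\le2\pi(2J+K)$, the parameters may be chosen small enough to obtain a uniform bound $\mathcal J_{\rho,\rho'}(u_n)\ge-C$, in contradiction with $\mathcal J_{\rho,\rho'}(u_n)\to-\infty$. This closes the contradiction and proves the lemma.

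The main obstacle is the combinatorial extraction in the second paragraph: one must produce a \emph{single} family of disjoint regions whose counts $(J,K)$ simultaneously exceed both thresholds $N+1$ and $M+1$, rather than just being far from some individual stratum of $\mathcal X'$. The subtlety is that $\mathcal X'$ is a union of strata indexed by two integer parameters, so the standard single-parameter characterization used in \cite{dm,lucaks} cannot be quoted directly. A workable approach is to classify each candidate concentration region as \emph{interior} (at distance $\ge\d$ from $\de\S$) or \emph{boundary} (touching $\de\S$) once and for all using a fixed partition of $\S$ into an interior region $\widetilde\S$ as in \eqref{sigmaw} and a tubular neighborhood of $\de\S$, run the usual Lip$'$-covering procedure inside each piece, and then check that the pair $(J,K)$ produced must fail \emph{every} admissibility condition defining $\mathcal X'$, forcing the required joint bound $J+K\ge N+1$, $2J+K\ge M+1$.
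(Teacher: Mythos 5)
Your proposal is correct and follows essentially the same route as the paper: a covering argument producing well--separated concentration regions whose counts $(J,K)$ violate both admissibility conditions of $\mathcal X'$ (i.e.\ $J+K\ge N+1$ and $2J+K\ge M+1$), followed by Lemma \ref{mtimproved} and Corollary \ref{mtimpr2} to bound $\mathcal J$ from below and reach a contradiction. Your explicit remark that the two--parameter stratification of $\mathcal X'$ requires adapting the standard single--parameter covering lemma corresponds precisely to the ``minor modifications'' the paper invokes when citing \cite{bjmr} and \cite{lucaks}.
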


\begin{proof}

We follow a rather widely-used scheme from \cite{bjmr} (see also \cite{dm,lucaks}), therefore we will be sketchy.

We suffice to show the following fact: for any $\e>0$ there exists $L=L_\e>0$ and $x_{11},\dots,x_{1J}\in\S$, $x_{21},\dots,x_{2K}\in\de\S$ such that

\beq\label{claim}
\begin{array}{ll}\text{either}&J+K\le N\vspace{.3cm}\\\text{or}&2J+K\le M\end{array}\quad\quad\quad\text{and}\quad\quad\quad\frac{\int_{\bigcup_{i,j}B_\e(x_{ij})}Ke^u}{\int_\S Ke^u}\ge1-\e.
\eeq
In fact, arguing as in \cite{bjmr}, Proposition 4.6, there exist $t_{11},\dots,t_{1J},t_{21},\dots,t_{2K}\in[0,1]$ such that $\sum_{i,j}t_{ij}=1$ and

$$\xi:=\sum_{i,j}t_{ij}\d_{x_{ij}}\quad\quad\quad\Rightarrow\quad\quad\quad\dist_{\mathrm{Lip}'(\S)}\left(\frac{Ke^u}{\int_\S Ke^u},\xi\right)<\e;$$
notice that, due to the algebraic assumptions in \eqref{claim} and the definition of $\mathcal X'$, we have $\xi\in\mathcal X'$.

To show the claim \eqref{claim}, we argue by contradiction. If $\frac{\int_{\bigcup_{i,j}B_\e(x_{ij})}Ke^u}{\int_\S Ke^u}<1-\e$ for any such $x_{ij}$, then we can apply a covering argument (\cite{bjmr}, Lemma 4.4; \cite{lucaks}, Lemma 3.16 and minor modifications) to get the following: there exist $\d(\e)>0$ and $x'_{11},x'_{1J'},x'_{21},x'_{2K'}$ with $J'+K'\ge N+1$, $2J'+K'\ge M+1$ and

\begin{eqnarray*}
\dist\left(x'_{ij},x'_{kl}\right)\ge\d,&\quad&\forall(i,j)\ne(k,l);\vspace{0.2cm}\\[5pt]
\dist\left(x'_{1j},\de\S\right)\ge\d,&\quad&\forall j=1,\dots,J';\vspace{0.2cm}\\[5pt]
\frac{\int_{B_\d(x_{ij})}Ke^u}{\int_\S Ke^u}\ge\d,&\quad&\forall i,j.
\end{eqnarray*}

We are now in position to apply Lemma \ref{mtimproved} with $\Omega_{ij}:=B_\d\left(x'_{ij}\right)$ which gives, together with Corollary \ref{mtimpr2}, $\mathcal J(u)\ge-C$. This proves the claim and the present proposition.

\end{proof}

\

At this point, we need to fill the gap between the spaces $\mathcal X$ and $\mathcal X'$. Actually, we show that the latter retracts on the former.

This is a crucial step in the proof of Theorem \ref{eximinmax}, as in some cases existence of min--max solutions may fail if one only has maps $\Phi:\mathcal X\to\mathcal J^{-L}$ and $\Psi:\mathcal J^{-L}\to\mathcal X'$ without any relation between $\mathcal X$ and $\mathcal X'$.

More precisely, in the case $N\ge M$, we have an actual deformation retract, namely there is no topological loss when passing from $\mathcal X'$ to the simpler $\mathcal X$. In particular, in the case of a simply connected $\S$, which is not covered by Theorem \ref{eximinmax}, not only $\mathcal X$ but also $\mathcal X'$ is contractible.

\begin{proposition}\label{propretract}
There exists a retraction $\Pi:\mathcal X'\to\mathcal X$ such that $\Pi(\xi)=\xi$ for any $\xi\in\mathcal X$.

If $N\ge M$, such a map can be taken as a deformation retract.
\end{proposition}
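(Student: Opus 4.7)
\emph{Case $N\ge M$.} Here a genuine deformation retract is available: by the choice of $\d$ in \eqref{sigmaw}, $\widetilde\S$ is a deformation retract of $\S$, so let $r_t:\S\to\S$ be the corresponding homotopy. Lift it pointwise,
\[
\Pi_t\Bigl(\sum_{i=1}^N\a_i\d_{x_i}\Bigr):=\sum_{i=1}^N\a_i\d_{r_t(x_i)},
\]
which is continuous in the $\mathrm{Lip}'$ topology; $\Pi_t|_\mathcal{X}=\Id$ for all $t$, and $\Pi_1(\mathcal{X}')\subset\mathcal{X}$, giving the deformation retract sought.

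\emph{Case $N<M$.} Now $\mathcal{X}=(\de\S)_M$ is strictly smaller than $\mathcal{X}'$, which contains strata with interior mass. The strategy is to convert each interior Dirac into one or two boundary Diracs while preserving the count $\le M$. In a collar $U\cong\de\S\times[0,\e_0]$ I would define a splitting map $T$ of the form
\[
T(q,s):=\tfrac12\bigl(\d_{\g^-(q,s)}+\d_{\g^+(q,s)}\bigr),\qquad\g^\pm(q,0)=q,
\]
so that $T(q)=\d_q$ for $q\in\de\S$, and extend linearly to barycenters via $\Pi\bigl(\sum_i t_i\d_{p_i}+\sum_j s_j\d_{q_j}\bigr):=\sum_i t_i T(p_i)+\sum_j s_j\d_{q_j}$. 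On the stratum $(\S)_{i,M-2i}$ the output has at most $2i+(M-2i)=M$ Diracs, landing in $\mathcal X$; on $(\S)_{i,N-i}$ with $i\le M-N$, at most $N+i\le M$.

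For $(\S)_{i,N-i}$ with $i>M-N$, however, 2-for-1 splitting would give $N+i>M$, so on this part of $\mathcal{X}'$ the map $T$ must be modified to send each interior Dirac to only one boundary Dirac, relying on the bound $J+K=N\le M$ instead. Gluing the two regimes continuously along overlapping strata is the central technical difficulty, since $\S$ does not in general retract onto $\de\S$ and so the single-point alternative cannot be a pointwise retraction $\S\to\de\S$; it must depend on configuration-level data. I would resolve this via a cutoff interpolation in the depth parameter $s$, or a partition-of-unity argument over the strata in \eqref{xprim}, then verify $\mathrm{Lip}'$-continuity case by case and check the identity property on $\mathcal X$ (which is automatic from $T(q)=\d_q$).
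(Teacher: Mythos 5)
Your treatment of the case $N\ge M$ is exactly the paper's argument (push forward the deformation retract of $\S$ onto $\widetilde\S$) and is fine. For $N<M$, however, there is a genuine gap, and it is precisely the one you flag without resolving. A splitting $T(q,s)=\frac12\bigl(\d_{\g^-(q,s)}+\d_{\g^+(q,s)}\bigr)$ in which \emph{both} new points depend on the original one makes the support grow by the number of interior Diracs, and this breaks the cardinality bound on the strata $(\S)_{i,N-i}$ with $i>M-N$: for instance $(\S)_{N,0}$ would land on up to $2N$ boundary points, which exceeds $M$ whenever $2N>M$. The repair you sketch --- switching to a 1-for-1 assignment on those strata --- cannot be realized by a pointwise map $\S\to\de\S$ fixing $\de\S$ (as you yourself note, no such retraction exists in general), and no configuration-level substitute is actually constructed; moreover $T$ is only defined on a collar, so interior Diracs supported deep inside $\widetilde\S$ are not handled at all.

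The missing idea is to make one of the two target points a \emph{single fixed} boundary point $x_0$, shared by all interior Diracs. The paper defines $\pi:\S\to(\de\S)_2$ by $\pi\equiv\d_{x_0}$ on $\widetilde\S$, $\pi(y,t)=(1-t)\d_{x_0}+t\,\d_y$ on the collar $\Omega\cong\de\S\times[0,1]$, and $\pi(x)=\d_x$ on $\de\S$, and then takes $\Pi$ to be the push-forward. With this choice the support of $\Pi(\xi)$ is contained in $(\supp\xi\cap\de\S)\cup\{x_0\}\cup\{\text{boundary projections of the collar points}\}$, hence has at most $|\supp\xi|+1$ elements; since every $\xi\in\mathcal X'\setminus\mathcal X$ is supported on at most $M-1$ points (from \eqref{xprim}: the strata $(\S)_{i,M-2i}$ with $i\ge1$ have $M-i\le M-1$ support points, and the strata $(\S)_{i,N-i}$ have $N\le M-1$), the image always lies in $(\de\S)_M=\mathcal X$, uniformly over all strata. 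Finally, because $\Pi$ is the push-forward of one globally defined continuous map $\pi$, the strata-gluing problem you describe as the central difficulty simply does not arise.
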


\begin{proof}

We divide the case $N\ge M$ and $N<M$.

If $N\ge M$ we consider a deformation retract $\pi:\S\to\widetilde\S$ (see \eqref{sigmaw} for the definition of $\widetilde\S$) and we extend it to $\mathcal X'$ via push-forward, namely applying $\pi$ to any point of the support of $\xi\in\mathcal X'$:

\begin{eqnarray*}
\mathcal X'&\overset\Pi\longrightarrow&\mathcal X\\
\xi=\sum_it_i\d_{x_i}&\mapsto&\sum_it_i\d_{\pi(x_i)}.
\end{eqnarray*}

$\Pi$ is well-defined and continuous, because in the case $N\ge M$ we have $\mathcal X'=(\S)_N$ (see \eqref{xprim} for the definition). Moreover, it is a retraction because $\Pi(\xi)=\xi$ if $\xi\in\mathcal X=\left(\widetilde\S\right)_N$.

Furthermore, $\Pi$ is a deformation retract between $\mathcal X'$ and $\mathcal X$ because if $h:\S\times[0,1]\to\S$ is a homotopical equivalence with $h(\cdot,0)=\mathrm{Id}_\S$ and $h(\cdot,1)=\pi$, then

$$H(t,\xi):=\sum_it_i\d_{x_i}\mapsto\sum_it_i\d_{h(x_i,t)}$$
is a homotopical equivalence on $\mathcal X'$ with $H(\cdot,0)=\mathrm{Id}_{\mathcal X'}$ and $H(\cdot,1)=\Pi$.

\

Let us now consider the case $N<M$.

This time we will map $\S$ onto a cone in the space $(\de\S)_2$ of barycenters centered at two points on the boundary, then we will extend the map to $\mathcal X'$ via push-forward.

Take $\widetilde\S$ as before and $\Omega:=\overline{\S\setminus\widetilde\S}$. If $\widetilde\S$ is chosen properly, then $\Omega$ is homeomorphic to $\de\S\times[0,1]$, with $\de\S$ corresponding to $\de\S\times\{1\}$ and $\de\widetilde\S$ corresponding to $\de\S\times\{0\}$. Now, we construct $\pi$ in such a way that $\de\S$ is fixed and the whole $\widetilde\S$ is identified with a given $x_0\in\de\S$; to properly \emph{glue} the two conditions, we exploit the identification between $\Omega$ and $\de\S\times[0,1]$ to linearly interpolate between the deltas centered at $x_0$ and at some $x\in\de\S$:

\begin{eqnarray*}
\S&\overset\pi\longrightarrow&(\de\S)_2\vspace{.3cm}\\
x&\mapsto&\left\{\begin{array}{ll}\d_{x_0}&\text{if }x\in\widetilde\S,\vspace{.3cm}\\(1-t)\d_{x_0}+t\d_y&\text{if }x=(y,t)\in\Omega,\vspace{.3cm}\\\d_x&\text{if }x=(x,1)\in\de\S.\end{array}\right.
\end{eqnarray*}

It is clear that $\pi$ is well-defined and, due to the homeomorphism between $\Omega$ and $\de\S\times[0,1]$, it is continuous.

Figure 4 shows how the retraction behaves depending on the location of $x\in\S$.

\begin{figure}[h!]
\centering
\includegraphics[width=\linewidth]{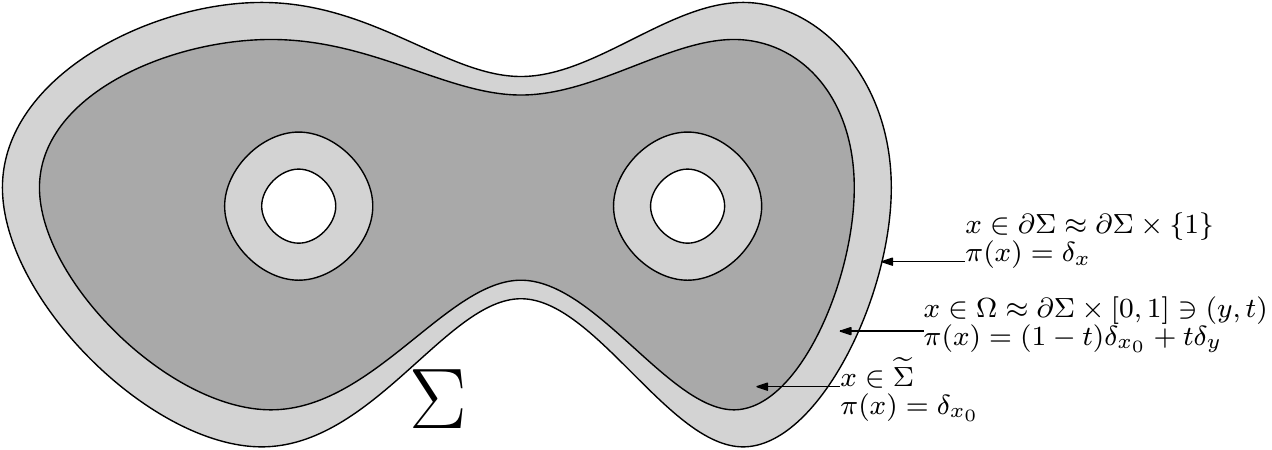}
\caption{The map $\pi:\S\to(\de\S)_2$}
\end{figure}

To extend $\pi$ to some $\Pi$ defined on the whole $\mathcal X=(\de\S)_M$ we set:

\begin{eqnarray*}
\mathcal X'&\overset\Pi\longrightarrow&\mathcal X\\
\xi=\sum_it_i\d_{x_i}&\mapsto&\sum_it_i\d_{\pi(x_i)}.
\end{eqnarray*}

First of all, since $\pi(x)=x$ for any $x\in\de\S$, then $\Pi|_{\mathcal X}=\mathrm{Id}_{\mathcal X}$.

Now, let us show that $\Pi(\xi)\in\mathcal X$ also for $\xi\in\mathcal X'\setminus \mathcal X$: in this case, from the definition of $\mathcal X$, $\xi$ will be supported in at most $M-1$ points. Therefore, since $\supp(\Pi(\xi))=(\supp(\xi)\cap\de\S)\cup\{x_0\}$, $\Pi(\xi)$ is supported in at most $M$ points in $\de\S$, namely it belongs to $\mathcal X$.

Finally, from the definition we also get that $\Pi$ coincides on the intersections of different strata of $\mathcal X$ and it is continuous, therefore it is the desired retraction.
\end{proof}

\

We have now all the tools to get the proof of Proposition \ref{phipsi}.

\begin{proof}[Proof of Proposition \ref{phipsi}]
We take $\Phi:=\Phi^{\l_0}$ as in Lemma \ref{test}, with $\l_0=\l_0(L)$ so large that $\Phi(\xi)\le-L$ for any $\xi\in\mathcal X$. Lemma \ref{test} ensures that this can be done for any $L>0$, which will be chosen later.

\

As for $\Psi$, we exploit Lemma \ref{lemma2} and the fact that $\mathcal X'$ is an Euclidean deformation retract, as each stratum is a Euclidean deformation retract. One can prove the latter fact by arguing as in \cite{bjmr}, Appendix A, or show the former fact by adapting the proof of \cite{lucaks}, Lemma 3.1.

Therefore, for $\e_0>0$ small enough we have a projection

$$\pi:\left\{\mu\in\mathrm M(\S):\,\dist_{\mathrm{Lip}'}(\mu,\mathcal X')<\e_0\right\}\mapsto\mathcal X',$$
where $\mathrm M(\S)$ is the space of signed measures on $\S$ equipped with $\mathrm{Lip}'$ topology.

We then apply Lemma \ref{lemma2} with $\e=\e_0$, hence for $L=L_\e$ we have

$$\psi\left(\frac{Ke^u}{\int_\S Ke^u}\right)\in\mathcal X'\quad\quad\quad\quad\quad\quad\forall u\in\mathcal J^{-L}.$$

Finally, to define $\Psi$ we need to pass from $\mathcal X'$ to $\mathcal X$, which we will using the map $\Pi$ defined in Proposition \ref{propretract}: we set
$$\Psi(u):=\Pi\circ\psi\left(\frac{Ke^u}{\int_\S Ke^u}\right)$$

\

To get the homotopy between $\Phi\circ\Psi$ and the identity map on $\mathcal X$, we just let $\l$ go to $+\infty$ in the definition of $\Phi$, namely:

\begin{eqnarray*}
\mathcal X\times[0,1]&\overset H\longrightarrow&\mathcal X\\
(\xi,t)&\mapsto&\left\{\begin{array}{ll}\Psi\circ\Phi^\frac{\l_0}{1-t}=\Pi\circ\psi\left(\frac{Ke^{\phi^\frac{\l_0}{1-t}_\xi}}{\int_\S Ke^{\phi^\frac{\l_0}{1-t}_\xi}}\right)&\text{if }t<1,\vspace{.3cm}\\\xi&\text{if }t=1.\end{array}\right.
\end{eqnarray*}

In fact, by the construction of $\Phi$, one has $\frac{Ke^{\phi^\l_\xi}}{\int_\S Ke^{\phi^\l_\xi}}\underset{\l\to+\infty}\weakto\xi$; moreover, $\psi$ being a retraction one has $\psi(\mu_n)\underset{n\to+\infty}\to\mu$ for any sequence of measures satisfying $\mu_n\underset{n\to+\infty}\weakto\mu$. Therefore, since $\Pi$ is also a retraction, we get the continuity of $H$ at $t=1$, which concludes the proof.

\end{proof}

\

{\bf Acknowledgements:} This paper is a part of the project ``The prescribed Gaussian and geodesic curvatures problem'' funded by Mathematisches Forschungsinstitut Oberwolfach as an Oberwolfach Leibniz Fellow. R.L-S. is currently funded under Juan de la Cierva Formaci\'on fellowship (FJCI-2017-33758) supported by the Ministry of Science, Innovation and Universities.

L.B. wants to express his gratitute to the Mathematisches Forschungsinstitut Oberwolfach for the kind hospitality received during his visit in October 2018.

R.L-S. wants to express his gratitude to the Mathematics and Physics Department of Roma Tre University for the kind hospitality received during his visit in June 2019.

Both authors wish to thank A. Jevnikar and D. Ruiz for several discussions and suggestions which has been of great help in the elaboration of Section 2.

\

\end{document}